\begin{document}

\newtheorem{lemma}{Lemma}
\newtheorem{theorem}{Theorem}
\newtheorem{corollary}{Corollary}
\newtheorem{definition}{Definition}
\newtheorem{remark}{Remark}

\title{Two-dimensional grain boundary networks: stochastic particle models
and kinetic limits }
\author{Joseph Klobusicky, Govind Menon, and Robert L. Pego}
\date{}
\maketitle

\date{}
 \maketitle

\begin{abstract}
We study kinetic theories for isotropic, two-dimensional grain boundary networks
which evolve by curvature flow.  The number densities $f_s(x,t)$ for $s$-sided
grains, $s =1,2,\ldots$,  of area $x$ at time $t$, are modeled by kinetic
equations of the form $\partial_t f_s + v_s \partial_x f_s =j_s$. The velocity
$v_s$ is given by the Mullins-von Neumann 
rule and the flux $j_s$ is determined by the topological transitions caused
by the vanishing of grains and their edges. 
The foundations of such kinetic models are examined through simpler particle
models for the evolution of grain size, as well as purely topological models
for the evolution of trivalent maps. These models are used to characterize
the parameter space for the flux $j_s$. Several kinetic models in the literature,
as well as a new kinetic model, are simulated and compared with direct numerical
simulations of mean curvature flow on a network. Existence and uniqueness
of mild solutions to  the kinetic equations with continuous initial data
is established. 

\end{abstract}

\section{Introduction } 
\label{sec:intro}
  
\subsection{Two dimensional grain boundary networks}
We propose a new class of kinetic and stochastic models to describe the statistics
of an evolving cellular network. We focus on the evolution of an isotropic,
two dimensional grain boundary network consisting of smooth arcs such that:

(i) the normal velocity of each arc is proportional to its curvature ({\em
curvature flow}); (ii) edges (typically) meet at trivalent junctions at
an angle of $2\pi/3$ (the {\em Herring boundary condition})~\cite{Herring}.
 Such a network of arcs decomposes the plane into a disjoint collection of
grains, each of which have the topology of polygons. Condition   (ii) expresses
the equilibrium of line tensions at a junction.

An important aspect of grain boundary evolution is the celebrated von Neumann-Mullins
relation~\cite{von1952discussion,Mullins}: the area $a(t)$ of a grain with
$s$ sides (a topological $s$-gon) changes linearly in time
\begin{equation}
\label{eq:von-neumann}
\frac{da}{dt} = c (s-6), 
\end{equation}
where $c$ is a material constant depending on surface tension and grain mobility.
Thus, the geometry of each grain does not affect its growth, all that matters
is the topology. In this setting, the statistics of a network with many grains
are naturally described by a set of {\em number densities\/} $f_s(t,a)$ that
count the number of   $s$-sided  cells per unit area that have area $a$ at
time $t$. 

We  derive kinetic equations that describe the evolution of $f_s(t,a)$ from
a simpler stochastic particle system that includes a deterministic drift
(as in equation~\ref{eq:von-neumann}) along with stochastic `switching'
rules  between populations based on the geometry of grain boundary networks.
This particle system is an instance of a Piecewise Deterministic Markov Process
(PDMP).  Several similar kinetic theories have been proposed in the literature
(as discussed in Section~\ref{sec:kinmodels}) on the basis of ad hoc rules,
or comparison with experiments. However, there appears to have been no prior
attempt to characterize the set of all possible kinetic models that may be
derived from similar foundations (the von Neumann-Mullins rule and assumptions
on the topological changes that arise when grains or grain boundaries vanish);
nor does there appear to have been a prior attempt to compare the predictions
of kinetic models with direct numerical simulations of grain boundary evolution.

\subsection{Outline}
from the geometry of grain boundary networks, a well-posedness analysis of
the limiting kinetic equtions, and simulations which compare several previous
kinetic models.
This article is organized into three inter-related, but loosely dependent
parts:
\begin{enumerate}
\item A general framework and well-posedness analysis for formal kinetic
limits of a  class of piecewise deterministic Markov  processes related to
grain boundary coarsening
 (Section  \ref{sec:model}, Appendices \ref{app:pdmpexplain} and \ref{sec:wp}).
\item Derivation of new stochastic and kinetic models for grain boundary
coarsening from topological rules
(Sections \ref{sec:tops}-\ref{sec:tunableparam}).\item Simulation of  stochastic
particle models that correspond to both new and existing kinetic models,
with  comparison to direct numerical simulations of a level set method (Section
\ref{sec:results}). 
\end{enumerate}       
In technical terms, the first part of this paper is most closely tied to
fluid-limits in queuing theory and the theory of piecewise-deterministic
Markov processes. It can be viewed as a demonstration of the utility of these
methods for cellular networks. The stochastic process studied here, an $M$-species
PDMP model, is a general model for particles that drift on the positive
real line and mutate between  several species.  An interesting feature of
this model is the set of mutation times, which are caused by particles reaching
the origin.
From the perspective of maps on surfaces, this corresponds to a face or edge
collapsing to
a point, and immediately changing its topology to satisfy the Herring conditions.
Each mutation, from predetermined model parameters,  induces randomness on
particle positions, and thus
makes mutation times random.  Understanding the behavior of the system as
a whole largely depends on describing the cumulative  number of mutations,
a `natural clock' for the system. The interplay between mutation
and empirical particle densities was rigorously studied in \cite{klobusicky2017concentration}
with a minimal example, where
authors JK and GM viewed the problem as an instance of a diminishing urn,
and provided exponential concentration inequalities for the convergence of
the particle model to its hydrodynamic limit.

While the first part focuses on the evolution of area statistics, with topological
restrictions arising only in the  description of boundary fluxes, the second
part of this paper is devoted to a study of the `topological skeleton' of
grain boundary evolution. The analysis of annihilation and creation of grains
is examined using the theory of maps on compact surfaces. These ideas are
used to define a Markov chain on the space of trivalent maps ({\em trivalent
map evolution\/}). We do not explore such Markov chains in detail. Instead,
we use trivalent map evolution to systematically derive parameters for the
stochastic particle system model.

In the third part, we  provide a numerical comparison between stochastic
particle simulations and direct numerical simulations of grain boundary networks
using a level set method~\cite{Elsey1}. The stochastic particle models  described
in Section \ref{sec:model} are general enough to include assumptions from
previous  kinetic models  \cite{Fradkov1,Fradkov2,flyvbjerg1993model,marder1987soap}.
We also  consider a new assumption  which takes the rate of topological changes
due to edge deletion events to be proportional to the total grain number.
Our models also allow us to incorporate information about first-neighbor
correlations in networks.

\section{The stochastic particle system and kinetic equations} 
\label{sec:model}

In Section \ref{subsec:finitemodel}, we will  describe a class of particle
processes
which are amenable for modelling the coarsening, growth, and  mutation found
in cellular coarsening. These processes are   examples of  {\em piecewise
deterministic Markov processes\/}
(PDMPs). Roughly, the theory of PDMPs augments the structure of jump
Markov processes to include random jumps triggered by deterministic drift.
It is shown in
Appendix~\ref{app:pdmpexplain} that the particle system defined informally
below generates a well-defined evolution which is a strong Markov process.
In Section \ref{subsec:kineqns}, we present  kinetic limits for PDMP models
and state a well-posedness theorem for this limit, whose proof is provided
in Appendix \ref{sec:wp}.

\subsection{The finite particle  model } \label{subsec:finitemodel}

\label{subsec:model} 
We consider a system of $N(t)$ particles at time $t$ distributed amongst
$M$ species. Each particle is of the form $(s,x)$ where $s\in\{ 1, \ldots,
M\}$ indexes the species and $x \in \mathbb{R}_+$ denotes the size of the particle.
The total number of particles in each species is denoted $N_s(t)$, thus 
$N(t)=\sum_{s=1}^M N_s(t)$. The letter $N$ (without the argument $t$) is
always used to mean $N(0)$, and is a measure of the size of the system. The
state of the system is denoted 
\begin{equation}
\label{eq:state}
(\mathbf{s},\mathbf{x}) = (s_1, \ldots, s_{N(t)}; x_1, \ldots, x_{N(t)}).
\end{equation}  
The evolution of the system consists of a deterministic flow interspersed
with stochastic jumps. We describe these in turn.   

The deterministic flow is motivated by the von Neumann-Mullins rule. We divide
the species into three distinct groups $S_-$, $S_0$ and $S_+$ of size $M_-$,$M_0$
and $M_+$ respectively, with $M= M_-+M_0+M_+$. It is convenient to label
these species in order:
\begin{align}
\label{eq:species-order}
S_-&=\{1, \ldots, M_-\},\\  S_0 &= \{ M_-+1, \ldots, M_-+M_0\},\\  S_+&=\{M_-+M_0+1,
\ldots, M\}. 
\end{align}
For each species $s \in S_-$, we assume given a constant velocity  $v_s <0$,
so that a particle of size $x$ at $t=0$ has size $x+v_st$ at time $t>0$.
The  {\em exit time\/} for the particle $(s,x) $ is the time at which the
size of the particle vanishes, namely
\begin{equation}
\label{eq:first-hit}
T_s(x) =- \frac{x}{v_s}, \quad s \in S_-.
\end{equation}
We assume that the species $s \in S_0$ do not drift. That is, $v_s = 0$ for
$s \in S_0$. Finally, we assume $v_s >0$ for $s \in S_+$. The exit time for
all particles of species $S_0$ and $S_+$ is $+\infty$.

Randomness is introduced into the system in the following way. As $t$ increases,
each particle $(s,x)$ in the system drifts  deterministically  $(s,x) \mapsto
\left(s,\varphi_s(x,t)\right)$  where $\varphi_s$ is the flow map defined
by $x \mapsto \varphi_s(x,t) = x+v_st$ for  $s  = 1, \dots, M$. Particles
evolve until one of the following  {\em critical events} occur:

\begin{enumerate}                                     
\item[(B)] 
{\em Boundary event:\/} A particle hits the origin, i.e.  $\varphi_s(x,t)=0$
for some $(s,x)$ with $s \in S_-$ and $t=T_s(x)$.

\item[(I)]
{\em Interior event:} An independent Poisson clock with rate $\beta(t)>0$
attached to each particle rings. 
\end{enumerate} 
To fix ideas, we illustrate these definitions in the context of grain boundary
networks. Here the state of the system is a collection of $N(t)$ grains,
each belonging to one of $M$ topological classes; $s$ denotes the number
of sides of a grain and $x$ denotes its area. The velocity field $v_s$ governing
the evolution of an $s$-gon is  given by the von Neumann-Mullins rule~\ref{eq:von-neumann}.
   The critical events correspond respectively to: (B) the removal of a grain
from the network when its size shrinks to zero; and (I) a random interchange
of grains of different topology  when an edge  vanishes.    
    
Though the size of each particle evolves deterministically, each boundary
and interior event gives rise to a random mutation of particles of different
species.  We model each mutation with a {\em mutation matrix\/}. There are
$M_-+1$ such matrices: $M_-$ matrices corresponding to the $M_-$ possible
boundary events at each species $l \in S_-$,  and one matrix for interior
events. We find it necessary to include mutations in such generality to account
for the topology of cellular networks -- the topological changes arising
from the vanishing of  $3$, $4$ and $5$ sided grains in grain boundary networks
is {\em not\/} the same. Aside from some notational complexity, such generality
does not affect the analysis. In Section \ref{sec:tops}, we explain how to
choose the mutation rules based on the geometry of planar grain boundary
networks.

Consider the boundary event when a single particle of species $l$ hits the
origin.
The corresponding mutation is determined by a positive integer $K^{(l)}$,
an  $M \times K^{(l)}$  mutation matrix $R^{(l)}$ taking values in $1,\ldots,
M$, and a fixed $M$-vector $w^{(l)}$ with positive entries. We choose $K^{(l)}$
particles and mutate them as follows. First, $K^{(l)}$ iid integers $S_1,
\ldots, S_{K^{(l)}}$ that index species are chosen with  probability proportional
to the weights $w^{(l)}$ and the total population of each species:
\begin{equation}   
        \label{eq:flip-probs}
\mathbb{P} \left(S =\sigma\right) = \frac{w^{(l)}_\sigma N_\sigma(t)}{\sum_{n=1}^M
w^{(l)}_n  N_n(t)},   \quad \sigma = 1, \dots, M.
\end{equation}
Second, for each random species $S_j$, a random size $X_j$ is chosen with
equal
probability $1/N_{S_j}(t)$ amongst the sizes of all the particles of species
$S_j$. Finally, these random particles are mutated as follows:
\begin{equation}
\label{eq:mutation}     
\left(S_j, X_j\right) \longmapsto \left(R^{(l)}_{S_j,j},X_j\right), \quad
 j = 1, \ldots, K^{(l)}. 
\end{equation}
Thus, a particle of species $S_j$ with size $X_j$ is lost, and a particle
of species $R^{(l)}_{S_j,j}$   with the size $X_j$ is created in the mutation.
A particle of species $l$ with  size 0  is also lost, so that the total number
of the system decrements by one. Note that selection probabilities and mutations
may vary for each of the $K^{(l)}$ mutating particles. See Figure \ref{pdmppic}
for an example with four species. In the degenerate event that the sizes
of $p$  species, $p>1$, hit the origin simultaneously, we repeat the process
above $p$ times, ordering the boundary events at species $l_1, \ldots, l_p$
in the sequence $l_1 \leq l_2 \leq \ldots \leq l_p$ to be definite. Such
`collisions'  occur with zero probability in the kinetic limit.

The process of mutation at an interior event is similar. No particle vanishes,
but particles are mutated according to a fixed positive integer $K^{(0)}$,
a mutation matrix $R^{(0)}$ and weight $w_s^{(0)}$ as above. The integers
$S_1, \ldots, S_{K^{(0)}}$ are chosen with probability  
\begin{equation}   
        \label{eq:flip-probs-int}
\mathbb{P} \left(S =\sigma\right) = \frac{w^{(0)}_\sigma N_\sigma(t)}{\sum_{n=1}^M
w^{(0)}_n N_n(t)},   \quad \sigma = 1, \dots, M,
\end{equation}
and the particles mutated as follows
\begin{equation}
\label{eq:mutation-int} 
\left(S_j, X_j\right) \longmapsto \left(R^{(0)}_{S_j,j},X_j\right),
\quad  j = 1, \ldots, K^{(0)}. 
\end{equation}

In the context of grain boundary coarsening, the necessity of introducing
 randomness for selecting grains results from a mean field assumption.  Specifically,
 our  models will track individual grain areas and number of sides, but 
not information about which grains neighbor each other. To determine  
mutated grains at a critical event, we select  randomly according to equations
(\ref{eq:flip-probs})-(\ref{eq:mutation-int}), which impose that grains with
identical  topologies have equal chances of being selected to mutate, regardless
of their areas. 

In order to keep track of the flux in and out of a species  we define the
(constant) matrices with integer entries
\begin{equation}  
\label{eq:Jdef}
J^{(l)}_{s, \sigma  } = \sum_{j= 1}^{K^{(l)}}  \mathbf{1}_{\{R^{(l)}_{sj}=\sigma\}},
\quad 
J^{(0)}_{s, \sigma }   =   \sum_{j= 1}^{K^{(0)}} \mathbf{1}_{\{R^{(0)}_{sj}
= \sigma\}}.
\end{equation}
Here $s$ and $\sigma$ index species in $\{1,\ldots, M\}$, $l$ indexes a species
in $S_-$, and the entry $J^{(l)}_{s,\sigma}$ counts the total number of mutations
from species $s$ to species $\sigma$ when a particle of species $l$ hits
the origin. Similarly,  $J^{(0)}_{s,\sigma}$ enumerates the total number
of mutations from species $s$ to species $\sigma$ at an interior event. We
assume that there are no trivial mutations from a species to itself, i.e.,
\begin{equation}  
\label{eq:Jdeftrivial}
J^{(l)}_{\sigma \sigma}  = 0, 
\quad 
J^{(0)}_{\sigma \sigma} =   0.
\end{equation}
Summing over all species we obtain the identities
\begin{equation}
\label{eq:Jequality}
\sum_{\sigma=1}^M J^{(l)}_{s,\sigma} = \sum_{j=1}^{K^{(l)}} \sum_{\sigma=1}^M
\mathbf{1}_{R^{(l)}_{sj}=\sigma} = \sum_{j=1}^{K^{(l)}} 1 = K^{(l)}, \quad \sum_{\sigma=1}^M
J^{(0)}_{s,\sigma} =K^{(0)}.
\end{equation}
However, we do not assume detailed balance of mutations between species.
That is, in general,
\begin{equation}
\label{eq:nodetail}
\sum_{s=1}^M J^{(l)}_{s,\sigma}  \neq K^{(l)}, \quad \sum_{s=1}^M J^{(0)}_{s,\sigma}
 \neq K^{(0)}.
\end{equation}

\begin{figure}
\begin{centering}
\includegraphics[width=.5\textwidth]{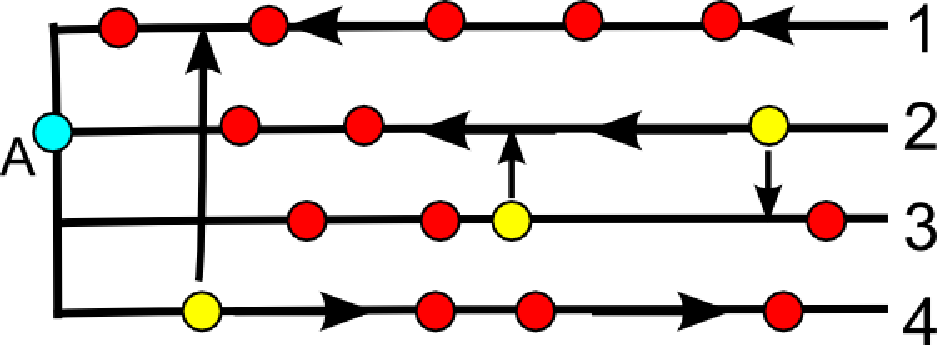}
\caption{\textbf{A PDMP with four species}.  Particles travel on four separate
copies of $\mathbb{R}_+$.  Velocity directions are represented by horizontal arrows,
with species 3 having zero velocity. A boundary event occurs when a particle
(labelled by ``A") hits the origin.  Three particles  are then randomly selected
($K^{(2)} = 3$), and reassigned  to different species by predetermined reassignments
(given by vertical arrows). In this example, $R_{41}^{(2)} = 1, R_{32}^{(2)}
= 2$, and $R_{23}^{(2)} = 3$. }
\label{pdmppic}
\end{centering}
\end{figure}

\subsection{Kinetic limits of finite particle model}  \label{subsec:kineqns}
Several  kinetic equations arise as $N\to \infty$ limits of Markovian particle
models. We now apply this approach to the particle models of Section 2.1

For each state $(\mathbf{s}(t),\mathbf{x}(t))$ and species $\sigma \in \{1, \ldots, M\}$
we define an empirical measure                  
\begin{equation}
        \label{eq:empirical}
\mu_\sigma^N(t)= \frac{1}{N}\sum_{i=1}^{N(t)} \mathbf{1}_{\{s_i =\sigma\}} \delta_{x_i}.
\end{equation}   
The empirical measures are normalized by the fixed initial number $N=N(0)$,
not $N(t)$. Thus, $\sum_{\sigma=1}^N\mu_\sigma^N(t)$ is in general  not
a probability measure for $t >0$. In what follows,
we will not be interested in a rigorous demonstration of the convergence
of empirical measures to solutions of kinetic equations
(see \cite{klobthesis} for a compactness argument for the convergence of
empirical measures to kinetic equations).  Rather, we begin   with the assumption
that for each species $\sigma$ the weak limit $\mu_\sigma(t)
= \lim_{N\to \infty} \mu_\sigma^N(t)$ is deterministic and has a number
density 
\begin{equation}
\mu_\sigma(t)(dx) = f_\sigma(x,t)\, dx. 
\end{equation}
With these densities, we give a formal argument for kinetic equations. In
Appendix \ref{sec:wp}, we show well-posedness for these kinetic models, along
with  some expected conservation properties  
 for special instances related to grain-boundary coarsening. 

In the continuum limit, we can define the total numbers of particles in $\mu_\sigma$,

\begin{equation} 
        \label{eq:numbers-not1}
F_\sigma(t)= \int_0^\infty f_\sigma(x,t)\, dx, \quad  F(t) = \sum_{\sigma=1}^M
F_\sigma(t),
\end{equation} 
and the weighted fractions
\begin{equation}
        \label{eq:numbers-not2}
W_\sigma^{(l)}(t)= \frac{w^{(l)}_\sigma}{\sum_{n=1}^M w^{(l)}_n F_n(t)},  \quad
\gamma(t)= \frac{F(t)}{\sum_{n=1}^M w_n ^{(0)} F_n(t)}.    
\end{equation}
Then for each species $\sigma \in \{1, \ldots, M\}$, the formal kinetic equations
for the number density $f_\sigma$ are
\begin{eqnarray} 
\lefteqn{\partial_t f_\sigma(x,t)+ v_\sigma \partial_{x}f_\sigma (x,t)= j_\sigma
:= j_\sigma^+(x,t) -j_\sigma^-(x,t),} \label{eq:kinetic}
\\  
\label{eq:jplus}
j_\sigma^+(x,t) & = & \sum_{s = 1}^{M} \left( \sum_{l = 1}^{M_-} \dot{L_l}
 J^{(l)}_{s,\sigma} W_s^{(l)}(t)  
+\beta (t)\gamma(t)  J^{(0)}_{s,\sigma} w^{(0)}_s \right) f_s(x,t), \\
\label{eq:jminus}
j_\sigma^-(x,t) & = & \left( \sum_{l = 1}^{M_-} \dot{L_l} K^{(l)} W_\sigma^{(l)}(t)
+  \beta (t)\gamma(t) K^{(0)}  w^{(0)}_\sigma \right) f_\sigma(x,t), \\
\label{eq:fluxdef}
\dot{L}_l &=& - f_l(0,t)v_l, \quad l = 1, \ldots, M_-.
 \end{eqnarray}
While perhaps cumbersome at first sight, equation \ref{eq:kinetic} is easily
understood as a formal hydrodynamic limit of the $N$ particle PDMP described
in the previous section.
The index $\sigma$ denotes a fixed species under consideration. The left-hand
side of \ref{eq:kinetic} describes the advection of the number density $f_\sigma$
under the constant velocity $v_\sigma$. The right-hand side describes the
growth and loss of species $\sigma$ due to fluxes $j_\sigma^{\pm}$ into and
out of species $\sigma$. The fluxes  in equations \ref{eq:jplus} and \ref{eq:jminus}
arise from interior and boundary events.
In these equations, the index $l$ enumerates all possible boundary events,
and the index $s$ enumerates all the species that could mutate to species
$\sigma$. A boundary event for  species $l \in S_-$ gives rise to both birth
and death terms in proportion to the rate $\dot{L_l}= -f_l(0,t)v_l(0)$ and
the weights $W_s^{(l)}(t)f_s(x,t)$. The  weights $J^{(l)}_{s,\sigma}$ and
$J^{(0)}_{s,\sigma}$ defined in equation~\ref{eq:Jdef} arise as we sum
over all mutations that lead to the creation of particles of species $\sigma$
of size $x$ when a particle of species $l$ hits the origin. Similarly, such
particles may be lost when they are mutated. This occurs in proportion to
the weight $W_\sigma^{(l)}(t)$. The terms multiplied by the rate $\beta (t)\gamma(t)$
account for interior events.
  
The boundary values $f_l(0,t)$, for the outgoing species $l \in S_-$ play
a subtle role in the kinetic equation since they determine the rate of boundary
events. In order to obtain well-posedness of the kinetic equations, we will
assume that the  number densities are continuous on $[0,\infty)$ so that
there is no ambiguity in defining their boundary values. In contrast, the
boundary value of the species $l \in S_0,S_+$ do not affect the flux and
we impose the boundary conditions
\begin{equation}
\label{eq:kinetic-BC}
f_l(0,t) =0, \quad l= M_-+1, \ldots, M.
\end{equation}  

\subsection{Well-posedness}
The kinetic equations \ref{eq:kinetic} admit mild solutions on a maximal
interval of existence. In order to define mild solutions, we integrate \ref{eq:kinetic}
along characteristics for each species $\sigma$ to obtain
\begin{equation}
\label{eq:uq1}
f_\sigma(x,t) = f_\sigma(x-v_\sigma t, 0) + \int_0^t j_\sigma\left(x-v_\sigma(t-\tau),
\tau\right) \, d\tau.
\end{equation}
Here we assume that $x \geq 0$, $t >0$. Thus, formula \ref{eq:uq1} is well
defined for all species with $v_\sigma \leq 0$, i.e. for $\sigma \in S_-,S_0$.
For the species with $v_\sigma>0$ we must use the boundary condition \ref{eq:kinetic-BC}
and a priori the integral in time is defined only over the time domain $\tau
\in [x/v_\sigma,t]$. However, for convenience, we extend the formula \ref{eq:uq1}
to include the domain $\tau \in [0,t]$ by setting  $f_\sigma(x,\tau) = j_\sigma(x,\tau)
= 0$ when $x \leq 0$. It is then clear that \ref{eq:uq1} agrees with the
solution obtained from the method of characteristics and the boundary condition
\ref{eq:kinetic-BC}.

Let $X$ denote the space of continuous and integrable functions $f=(f_1,
\ldots, f_M) :[0,\infty) \to \mathbb{R}^M$ equipped with the norm
\begin{equation}
\label{eq:def-norm}
\|f\| :=  \| f\|_{L^1}  + \| f \|_{L^\infty}, \quad
\|f\|_{L^1} := \sum_{\sigma=1}^M \| f_\sigma \|_{L^1}, \quad \|f\|_{L^\infty}
:= \sum_{\sigma=1}^M \| f_\sigma \|_{L^\infty}. 
\end{equation}
It is easy to check that $X$ is a Banach space. We also denote 
\begin{equation}
\label{eq:def-F}
F_\sigma = \int_0^\infty f_\sigma(x) \, dx, \quad F = \sum_{\sigma=1}^M F_\sigma.
\end{equation}
We say that $f \in X$ is {\em positive\/} if $f_\sigma(x) \geq 0$ for each
 $\sigma$ and each $x \geq 0$. When $f$ is positive, $F =  \|f  \|_{L^1}$.

\begin{definition}
\label{def:class-soln}  
Assume $T >0$ is given.  A map  $f\in C([0,T];X)$ is a mild solution to \ref{eq:kinetic}
if \ref{eq:uq1} holds for $x \in [0,\infty)$ and $t \geq 0$. We say that
$f$ is a positive  mild solution if $f(t)$ is positive for each $t \in [0,T]$.
\end{definition}

\begin{theorem}
\label{thm:wp}  
Assume given positive $f_0 \in X$, and $\sum_{n=1}^M w^{(l)}_n
F_n>0$ for  $l = 0, \dots, M_-$. Also assume $\beta(t) \equiv \beta$ is constant.
There exists a (possibly infinite) time $T_*>0$ and a unique map $f \in 
C([0,T_*); X)$ with $f(0)=f_0$ such that $f$ is a positive, mild solution
to \ref{eq:kinetic} on each interval $[0,T]$ with $0 < T < T_*$. 
Further, $\lim_{t \to T_*}\sum_{n=1}^M w^{(l)}_n F_n(t) =0$ if $T_* < \infty$.
\end{theorem}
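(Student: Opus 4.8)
The plan is to recast the mild-solution identity \qref{eq:uq1} as a fixed-point equation $f=\Phi(f)$ on $C([0,T];X)$, prove local existence and uniqueness by the contraction mapping principle, propagate positivity, and then continue the solution to a maximal time $T_*$ whose breakdown is governed by the stated criterion. Writing $D_l(t)=\sum_{n} w\lsup_n F_n(t)$ for the denominators in \qref{eq:numbers-not2}, the key structural observation is that, for $f$ ranging over functions with $D_l\ge\delta>0$ and $\|f\|\le R$, every coefficient in the flux $j_\sigma=j_\sigma^+-j_\sigma^-$ is a bounded, locally Lipschitz functional of $f$: the numbers $F_n[f]=\int_0^\infty f_n$ are linear and $1$-Lipschitz in $\|\cdot\|_{L^1}$, so $W_\sigma\lsup=w\lsup_\sigma/D_l$ and $\gamma=F/D_0$ are Lipschitz on $\{D_l\ge\delta\}$, while the boundary values $f_l(0,t)$ entering $\dot L_l=-v_l f_l(0,t)$ are controlled by $\|f\|_{L^\infty}$. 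This last point is exactly why the norm \qref{eq:def-norm} contains the $L^\infty$ term and why we insist on continuous densities, so that the trace at $x=0$ is well defined and Lipschitz in $X$.

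For the local step I would take $\Phi$ to be the right-hand side of \qref{eq:uq1} (with the convention $f_\sigma=j_\sigma=0$ for $x\le0$) and show, on a ball $B_R\subset C([0,T];X)$ around the constant path $f_0$ with $T$ small, that $\Phi$ maps $B_R$ into itself and is a contraction. Since $j_\sigma[f]$ is bilinear in $f$ and its coefficients, which are bounded and Lipschitz on $B_R$ (using $D_l(0)>0$ and continuity to keep $D_l\ge\delta$ for small $T$), each Duhamel term carries a factor $\int_0^t d\tau\le T$, yielding both self-mapping and contraction for $T=T(\delta,R)$ small. To obtain positivity at the same time I would run the identical contraction on the closed cone $B_R\cap\{f\ge0\}$ using the equivalent integrating-factor form of \qref{eq:uq1}, namely $f_\sigma(x,t)=e^{-A_\sigma(0,t)}f_\sigma(x-v_\sigma t,0)+\int_0^t e^{-A_\sigma(\tau,t)}j_\sigma^+(x-v_\sigma(t-\tau),\tau)\,d\tau$ with $A_\sigma(\tau,t)=\int_\tau^t a_\sigma$ and $a_\sigma$ the nonnegative coefficient of $f_\sigma$ in \qref{eq:jminus}. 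On the cone $a_\sigma\ge0$ (here $\dot L_l\ge0$ because $v_l<0$ and $f_l(0,\cdot)\ge0$) and $j_\sigma^+\ge0$, so this map preserves $\{f\ge0\}$; its fixed point is positive, and since the two integral forms have identical fixed points, it solves \qref{eq:uq1}.

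The heart of the global statement is a pair of a priori bounds. Integrating \qref{eq:kinetic} in $x$ and summing over $\sigma$, the identity $\sum_\sigma J\lsup_{s,\sigma}=K\lsup$ from \qref{eq:Jequality} makes the total gain and loss cancel, leaving $\frac{d}{dt}F(t)=-\sum_l\dot L_l\le0$. Hence $F$ is non-increasing, $\|f(t)\|_{L^1}=F(t)\le F(0)$, and, crucially, the ``natural clock'' $L(t)=\int_0^t\sum_l\dot L_l\,d\tau\le F(0)$ is bounded for all time. This integrated bound on the boundary-event rate is what tames the apparently quadratic growth of $\|f\|_{L^\infty}$: from the characteristic bound $\|f_\sigma(t)\|_{L^\infty}\le\|f_{0\sigma}\|_{L^\infty}+\int_0^t\|j_\sigma^+(\cdot,\tau)\|_{L^\infty}\,d\tau$, summation and \qref{eq:Jequality} give $\|f(t)\|_{L^\infty}\le\|f_0\|_{L^\infty}+\int_0^t\big(\sum_l C_l\dot L_l/D_l+C_0\big)\|f(\tau)\|_{L^\infty}\,d\tau$, and on any interval where $D_l\ge\delta$ we have $\int_0^t\dot L_l/D_l\le\delta^{-1}L(t)\le\delta^{-1}F(0)$, so Gronwall yields $\|f(t)\|_{L^\infty}\le\|f_0\|_{L^\infty}\exp(C\delta^{-1}F(0)+C_0t)$. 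I expect this $L^\infty$ estimate to be the main obstacle, since without the clock bound the Riccati-type term would permit finite-time blow-up even while the denominators stay positive.

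Finally I would let $T_*$ be the supremum of existence times. Because the local existence time $T(\delta,R)$ depends only on a lower bound $\delta$ for the $D_l$ and an upper bound $R$ for $\|f\|$, the solution extends as long as these persist. Since all weights are positive, $(\min_n w\lsup_n)F\le D_l\le(\max_n w\lsup_n)F$, so the $D_l$ are comparable to $F$ and, by monotonicity of $F$, bounded below exactly when $\lim_{t\uparrow T_*}F(t)>0$. If $T_*<\infty$ and this limit were positive, then $\delta:=(\min w)\lim F>0$ bounds all $D_l$ from below on $[0,T_*)$, the two a priori bounds give a uniform bound on $\|f(t)\|$, and restarting the contraction from times near $T_*$ with the uniform existence time would extend the solution beyond $T_*$, contradicting maximality. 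Hence $\lim_{t\uparrow T_*}F(t)=0$, equivalently $\lim_{t\uparrow T_*}\sum_n w\lsup_n F_n(t)=0$, as claimed; uniqueness on $[0,T_*)$ follows from local uniqueness together with a standard Gronwall continuation argument. The one routine functional-analytic point to verify along the way is that the shift semigroup $f_0\mapsto f_0(\cdot-v_\sigma t)$ acts strongly continuously on $X$, so that $\Phi$ indeed lands in $C([0,T];X)$; this is immediate in $L^1$ and holds in $L^\infty$ for uniformly continuous data.
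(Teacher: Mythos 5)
Your proof is correct and follows essentially the same route as the paper: a contraction argument for the mild formulation \qref{eq:uq1}, based on local Lipschitz continuity of the flux on a ball where the denominators $\sum_n w\lsup_n F_n$ stay bounded below; positivity via the integrating-factor rewriting of the loss term (the paper's \qref{eq:uq2}); the conservation law $F(t)+\sum_l L_l(t)=F(0)$ derived from \qref{eq:Jequality}; and an $L^\infty$ Gronwall bound that degenerates only when $F\to 0$, followed by the standard continuation argument. The one place you genuinely diverge --- and arguably improve on the paper --- is the $L^\infty$ a priori estimate. The paper's inequality \qref{eq:flux-infinity} passes from $\sum_l \|A\lsup\|_o\,\dot L_l\,\|f\|_\infty$ to $C\bigl(\max_l\sum_k W_k^{(l)}\bigr)\|f\|_\infty$, i.e.\ the factor $\dot L_l\le C\|f(t)\|_{L^\infty}$ silently disappears; taken literally this term is quadratic in $\|f\|_{L^\infty}$ and the Gronwall step would be of Riccati type. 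You instead keep $\dot L_l$ in the Gronwall coefficient and control its time integral by the clock bound $\sum_l L_l(t)\le F(0)$, which yields a linear Gronwall inequality with integrable coefficient and a bound depending only on $\|f_0\|_{L^\infty}$, $F(0)$, the lower bound $\delta$ on the denominators, and $T$. This is precisely the repair the paper's estimate needs, so your execution of the decisive step is the more careful one. Your closing caveat --- that strong continuity of translation on the $L^\infty$ component of $X$ requires uniform continuity of the data --- is a genuine technical point that the paper does not address either; it concerns the claim $f\in C([0,T];X)$ for merely continuous integrable $f_0$, not the substance of the fixed-point or continuation arguments.
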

The proof of Theorem~\ref{thm:wp} is presented in Appendix~\ref{sec:wp}.
Also shown for kinetic equations related to grain coarsening  is a proof
of  global existence  and  conservation of total area and zero polyhedral
defect. Note that the time $T_*$ may be finite as shown in Figure \ref{fig:counterpdmp}.

\section{Topological evolution of trivalent maps}
\label{sec:tops}

In this section we characterize the admissible topological changes during
grain boundary evolution on a compact surface $S$. In Section \ref{sec:mspecgrain},
these changes will be incorporated into our PDMP model of grain boundary
coarsening through the mutation matrix $R^{(l)}$. 

Recall that a map is a graph drawn on a surface. More precisely, a map is
a graph embedded in  a surface in such a manner that (i) all vertices are
distinct; (ii) none of the edges intersect except at vertices; (iii) each
face obtained by cutting the surface along its edges is homeomorphic to an
open disk~\cite[Ch.1.3]{Lando}. The topology of a grain boundary network
on  $S$ is that of a trivalent map on $S$. We denote the set of maps by $\mathfrak{M}(S)$
and the set of trivalent  maps by $\mathfrak{T}(S)$. 

The topology of a grain boundary network stays constant when the evolution
is smooth, but jumps when a grain or grain boundary vanishes. Thus, the topology
of the network is a map $M: [0,\infty) \to \mathfrak{T}(S)$ that is piecewise
constant. We adopt the convention that $M$ is right continuous. At a jump
at time $t$, $M(t)$ is obtained from $M(t_-)$ by a surgery consisting of
the contraction of a face or edge of $M(t_-)$ to a vertex $v_*$ followed
by the attachment of a new graph $T$ at $v_*$. The topology of $T$ must be
consistent with the face or edge of $M(t_-)$ that was contracted, as well
as the irreversibility condition that energy cannot increase. We show below
that these restrictions imply that $T$ must be a planar rooted tree.

The remainder of this section is organized as follows. We first establish
the topological restrictions on attachment in Theorem~\ref{topthm} below.
This is followed by the definition of a Markov chain on $\mathfrak{T}(S)$ that
may be used to model the `topological skeleton' of grain boundary evolution.
Finally, we apply these topological restrictions to obtain the mutation matrices
for grain boundary evolution, thus connecting the size-based kinetic theory
of Section~\ref{sec:model} to topological changes.

\subsection{The topology of attachment}
\label{subsec:attachment}
We will restrict our analysis of detachment and attachment to the situation
where at most one face shrinks to zero size or at most one edge shrinks to
zero length. When a $k$-sided face, $k \leq 5$, of  a trivalent map $M \in
\mathfrak{T}(S)$ vanishes we obtain a map with a $k$-valent vertex. In a similar
manner, if a single edge in a trivalent map shrinks to zero length, we obtain
a $4$-vertex. 

It is convenient to introduce some notation for this process. Given $M \in
\mathfrak{T}(S)$, let 
$\mathcal{O}(M)$ denote the set of maps that are obtained from $M$  by contracting
a $k$-sided face ($k \leq 5$) or an edge to a single vertex. Thus, every
vertex in $N \in \mathcal{O}(M)$ is trivalent, except for possibly one distinguished
vertex, denoted $v_*$, which is $k$-valent, $1 \leq k \leq 5$. Let $e_1,
\ldots, e_k$ denote the edges in $N$ that are incident to $v_*$.

The process of attachment is a mapping from $\mathcal{O}(M)$ to $\mathfrak{T}(S)$
that involves `blowing-up' the vertex $v_*$ into a map $T$ and attaching
$T$ to $N$ by the edges $e_1, \ldots, e_k$. The possible choices for $T$
are constrained by the following criterion:
\begin{enumerate}
\item[(i)]  $T$ is a map into the closed unit disk $B$ with $k$ vertices
on the boundary $\partial B$. We denote these vertices $\{ p_1,\ldots, p_k\}$
in cyclic order on $\partial B$.
\item[(ii)] All vertices of $T$ in the interior of $B$ are trivalent. 
\item[(iii)] All interior faces of $T$ (i.e. faces whose edges lie in the
interior of $B$) have at least seven sides.
\end{enumerate}
Condition (i) is the boundary condition necessary so that $T$ can be attached
to $N$ to obtain a trivalent map on $S$. The vertices $\{p_j\}_{j=1}^k$ are
temporary placeholders that are deleted when we attach $T$ to $N$; their
role is to ensure that $T$ can be connected to $N$ by the edges $e_1, \ldots,
e_k$. The other two conditions are consequences of irreversibility. Condition
(ii) says that vertices that are not trivalent cannot persist in grain boundary
evolution. Condition (iii) says that it is impossible to nucleate grains
with six or fewer sides since this would contradict the Mullins-von Neumann
relation. These criteria sharply limit the possible choices of $T$.

\begin{theorem} 
\label{topthm} 
Suppose $k \leq 5$ and $T \in \mathfrak{M}(B)$ satisfies the conditions listed
above. Then $T$ is a tree.
\end{theorem}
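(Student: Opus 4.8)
The plan is to argue by contradiction. Since a connected map is a tree exactly when its plane graph has no bounded complementary region, I would assume that $T$ has at least one \emph{interior face} and derive $k \ge 7$, contradicting $k \le 5$. Here I read condition (iii) in the natural way: an interior face is one bounded entirely by edges of $T$ (equivalently, a bounded face of the plane graph $T$, since all edges of $T$ lie in the open disk), so its boundary never runs along an arc of $\partial B$. The first observation is then that, for a connected map, ``$T$ is a tree'' is equivalent to ``$T$ has no interior face,'' so the whole theorem reduces to excluding interior faces.

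First I would extract a lower bound on the ``width'' of the interior region. Let $G$ be the map whose $2$-cells are the interior faces of $T$ and whose edges and vertices are those lying on their boundaries, and restrict to one connected component $G_0$. I would apply the combinatorial Gauss--Bonnet identity (a repackaging of Euler's formula), namely $\sum_f (6-|f|) + \sum_v (6 - 2\deg v) = 12$, to $G_0$. The bounded faces of $G_0$ are interior faces of $T$, each of size $\ge 7$, hence contribute at most $-1$; vertices interior to $G_0$ are trivalent by (ii) and contribute $0$; so the only possible positive contributions come from the outer face and from the degree-$2$ vertices of $G_0$, which contribute $+2$ each. Writing $b_2$ for the number of degree-$2$ vertices of $G_0$ and bounding the outer-face length below by the number of boundary vertices of $G_0$, the identity collapses to $2b_2 \ge 7 + |f_\infty^{G_0}| \ge 7 + b_2$, i.e. $b_2 \ge 7$.

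Next I would route these vertices to $\partial B$. Each degree-$2$ vertex of $G_0$ is either a boundary vertex of $T$ already, or is trivalent in $T$ and carries a third ``spoke'' edge leaving the interior region; that spoke lies in the set $\Phi$ of edges bounding no interior face. The key structural facts are that $\Phi$ is a forest (a cycle in $\Phi$ would enclose a bounded region tiled by interior faces, so its edges would bound an interior face), and that a spoke cannot dead-end at an interior vertex (trivalent vertices are not leaves), so it must reach a vertex of $\partial B$. Crucially, two distinct spokes cannot reach the same boundary vertex $p$, nor can a spoke reach a degree-$2$ vertex of $G_0$ that is itself a boundary vertex: in each case the two routing paths, together with an arc of $\partial G_0$ and the point $p$, would close up into a cycle of $T$-edges bounding a region, i.e. an interior face, contradicting that the $\Phi$-edges involved bound no interior face. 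Thus the $b_2 \ge 7$ vertices yield pairwise distinct boundary vertices of $T$, whence $k \ge 7$.

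The main obstacle is precisely this last bookkeeping, and it is worth saying why: a naive Gauss--Bonnet count on the whole disk is \emph{inconclusive}, because small ``boundary faces'' (those meeting an arc of $\partial B$) and low-degree boundary vertices can supply enough positive curvature to balance the deficit of the $\ge 7$-gons, so the obstruction is one of realizability rather than of numerics. The device that defeats this is the observation above that every mechanism which might let few boundary points absorb many spokes --- merging two spokes at a trivalent branch vertex, or feeding two spokes into one boundary vertex --- necessarily encloses a region bounded by $T$-edges and hence manufactures an interior face; this is what forbids reconvergence in $\Phi$ and forces the spokes to fan out to distinct boundary vertices. I would also dispatch two minor points: using the degree-$2$ vertex count in Step~1 (rather than a raw face-length count) keeps the argument immune to non-simple face boundaries and to bridges between components of $G$, and once $T$ is shown to have no interior face it is a forest, which is a tree because a map is connected by definition. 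Since $7 > 5$, this establishes the theorem (in fact for every $k \le 6$).
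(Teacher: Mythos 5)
Your strategy is genuinely different from the paper's. The paper extends $T$ to $S^2$ and runs a single global Euler count: trivalence gives $V_{\mathrm{int}}=(2E-k)/3$, a circuit of at least $k$ exterior-face edges gives $\sum_{f\in F_{\mathrm{int}}}|f|\le 2E-3k$, and combining these with $V-E+F=2$ and the seven-side bound yields $F\le k-5\le 0$, contradicting the existence of the outer face. You instead localize to the union $G_0$ of interior faces, use the discrete Gauss--Bonnet identity to produce at least seven degree-$2$ corners on $\partial G_0$, and then try to route a ``spoke'' from each corner out to $\partial B$ injectively. The first half of this is correct and nicely insulated against the boundary-face issue you worry about (your inequality $2b_2\ge 7+|f_\infty^{G_0}|\ge 7+b_2$ checks out, granting that $G_0$ has no holes --- an assertion you use but do not prove, though it does follow from the same enclosure principle you invoke elsewhere: a bounded face of $G_0$ that is not a face of $T$ would lie in the open disk and hence be tiled by interior faces, which would then be faces of $G_0$ inside it).

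The genuine gap is in the routing step. You claim a spoke ``cannot dead-end at an interior vertex (trivalent vertices are not leaves), so it must reach a vertex of $\partial B$.'' But a trivalent vertex of $T$ with exactly two incident edges bounding interior faces is precisely a leaf of the spoke forest $\Phi$: it is a degree-$2$ vertex of the graph $G$ of interior-face edges. So a spoke can perfectly well terminate at a degree-$2$ vertex of $G$. Your cycle argument excludes this only when that terminal vertex lies in the \emph{same} component $G_0$ (then a $\Phi$-path plus a $G_0$-path closes up inside the open disk and manufactures an interior face on a $\Phi$-edge). If $G$ has several components, a spoke emanating from $G_0$ can end at a degree-$2$ vertex of a different component $G_1$, and no contradiction arises; your seven corners of $G_0$ then need not reach $\partial B$ at all. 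The argument can be repaired, but it requires a global accounting rather than a component-local one: for instance, contract each component of $G$ to a single vertex, observe that the resulting graph on $\Phi$ is a forest (by your enclosure argument), note that each contracted vertex has degree at least $7$ while each $p_j$ has degree $1$ and each remaining interior vertex degree $3$, and compare $\sum_v\deg v$ with $2(|V|-1)$ to get $k\ge 5m+t+2\ge 7$. With that replacement (and the no-holes lemma made explicit), your proof goes through and, as you note, even gives the conclusion for $k\le 6$; but as written the step ``every spoke reaches $\partial B$'' is false.
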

\begin{corollary}
\label{topcor}  
When $k=2$, $3$, $4$ or $5$ respectively, there are $1$, $1$, $2$ or $5$
topologically distinct choices for $T$ as shown in Figure~\ref{crittypes}.

\end{corollary}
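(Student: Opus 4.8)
The plan is to use Theorem~\ref{topthm} as a black box: since $T$ is already known to be a tree, the enumeration reduces to counting a finite family of planar trees. First I would pin down the local combinatorial data. By the blow-up construction each boundary vertex $p_j$ is an attachment stub carrying a single edge of $T$ (the one to be joined to $e_j$), so the $p_j$ are exactly the leaves of $T$, while every interior vertex is trivalent by condition~(ii). Letting $I$ denote the number of interior vertices, the handshake lemma gives $3I + k = 2\cdot\#\{\text{edges}\}$, and the tree identity gives $\#\{\text{edges}\} = (I+k)-1$. Eliminating the edge count yields $I = k-2$. Hence for $k=2,3,4,5$ the tree $T$ has $0,1,2,3$ interior vertices respectively, so only finitely many combinatorial types can occur.

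Next I would make precise that ``topologically distinct'' means up to an orientation-preserving homeomorphism of $B$ fixing $\partial B$ together with the cyclic arrangement of the marked points $p_1,\dots,p_k$. With this convention the embedded tree is determined by its abstract trivalent tree structure together with the cyclic order in which $p_1,\dots,p_k$ appear on $\partial B$. The essential constraint is planarity: the leaf-partition induced at an interior vertex is admissible only when the resulting leaf-blocks are cyclically contiguous, since otherwise two edges of $T$ would be forced to cross, violating the embedding. This is exactly what separates the two admissible pairings $\{p_1,p_2\}\mid\{p_3,p_4\}$ and $\{p_2,p_3\}\mid\{p_4,p_1\}$ from the crossing pairing $\{p_1,p_3\}\mid\{p_2,p_4\}$ when $k=4$.

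For the four small cases I would simply enumerate. When $k=2$ the unique tree is the single edge $p_1p_2$; when $k=3$ it is the unique ``$Y$'' joining one interior trivalent vertex to $p_1,p_2,p_3$. For $k=4$ the two interior vertices are joined by a single edge and each carries two contiguous leaves, giving the two planar configurations above; for $k=5$ the three interior vertices form a path, and the admissible placements of the five leaves in cyclic order give five configurations. These counts $1,1,2,5$ are precisely the ones displayed in Figure~\ref{crittypes}.

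Conceptually I would also record the clean reason behind $1,1,2,5$: these embedded trivalent trees with $k$ cyclically ordered leaves are in bijection with the triangulations of a convex $k$-gon via planar duality (triangles $\leftrightarrow$ interior trivalent vertices, diagonals $\leftrightarrow$ interior edges, boundary edges $\leftrightarrow$ leaves $p_j$), and the number of such triangulations is the Catalan number $C_{k-2}$, with $C_0=C_1=1$, $C_2=2$, $C_3=5$. The only genuinely load-bearing points are (a) justifying that each $p_j$ is a leaf, so that $I=k-2$ is valid, and (b) checking that the contiguity constraint is exactly what the triangulation bijection encodes, so that equivalence classes of embedded trees match triangulations one-to-one. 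The main obstacle I anticipate is therefore the careful formalization of ``topologically distinct'' and of the planarity constraint; for $k\le 5$, however, the direct enumeration bypasses (b) entirely, so I would rely on the degree count together with the four explicit case checks as the rigorous core, with the Catalan bijection supplied only as the conceptual explanation.
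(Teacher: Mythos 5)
Your proposal is correct, and it reaches the count by a slightly different route than the paper. The paper's proof is a one-line recursive argument: it splits the tree at a degree-one vertex into two subtrees, derives the Catalan-type recurrence $C_k=\sum_{i+j=k-1}C_{i+1}C_{j+1}$ in (\ref{catrec}), and then points to Figure~\ref{crittypes}; the identification of $C_k$ with the $(k-2)$nd Catalan number is stated immediately afterward. You instead establish finiteness via the closed-form count of interior vertices, $I=k-2$ (handshake lemma combined with $E=V-1$), and then enumerate the four cases directly, reserving the Catalan structure (via duality with triangulations of a convex $k$-gon) as a conceptual aside rather than the engine of the proof. Your degree count is a genuinely useful addition that the paper leaves implicit --- it is what guarantees a priori that the enumeration for each $k$ terminates, and it immediately pins down the unique abstract tree shape for $k\le 5$ (empty, a single vertex, an edge, a path of three interior vertices), after which only the cyclic placement of leaves remains to be checked against the non-crossing constraint. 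The trade-off is that the paper's recurrence handles all $k$ uniformly and yields the Catalan identification with no case analysis, while your direct enumeration is more self-contained and verifiable for the small cases actually needed. Both arguments rest on the same two facts you correctly flag as load-bearing: the boundary vertices $p_j$ have degree one (stated explicitly in the paper's proof of Theorem~\ref{topthm}) and ``topologically distinct'' is taken with the cyclic labeling of $p_1,\dots,p_k$ fixed, matching the paper's identification with planar rooted trivalent trees with $k$ leaves.
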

\begin{figure}
\centering
\includegraphics[width=.5\linewidth]{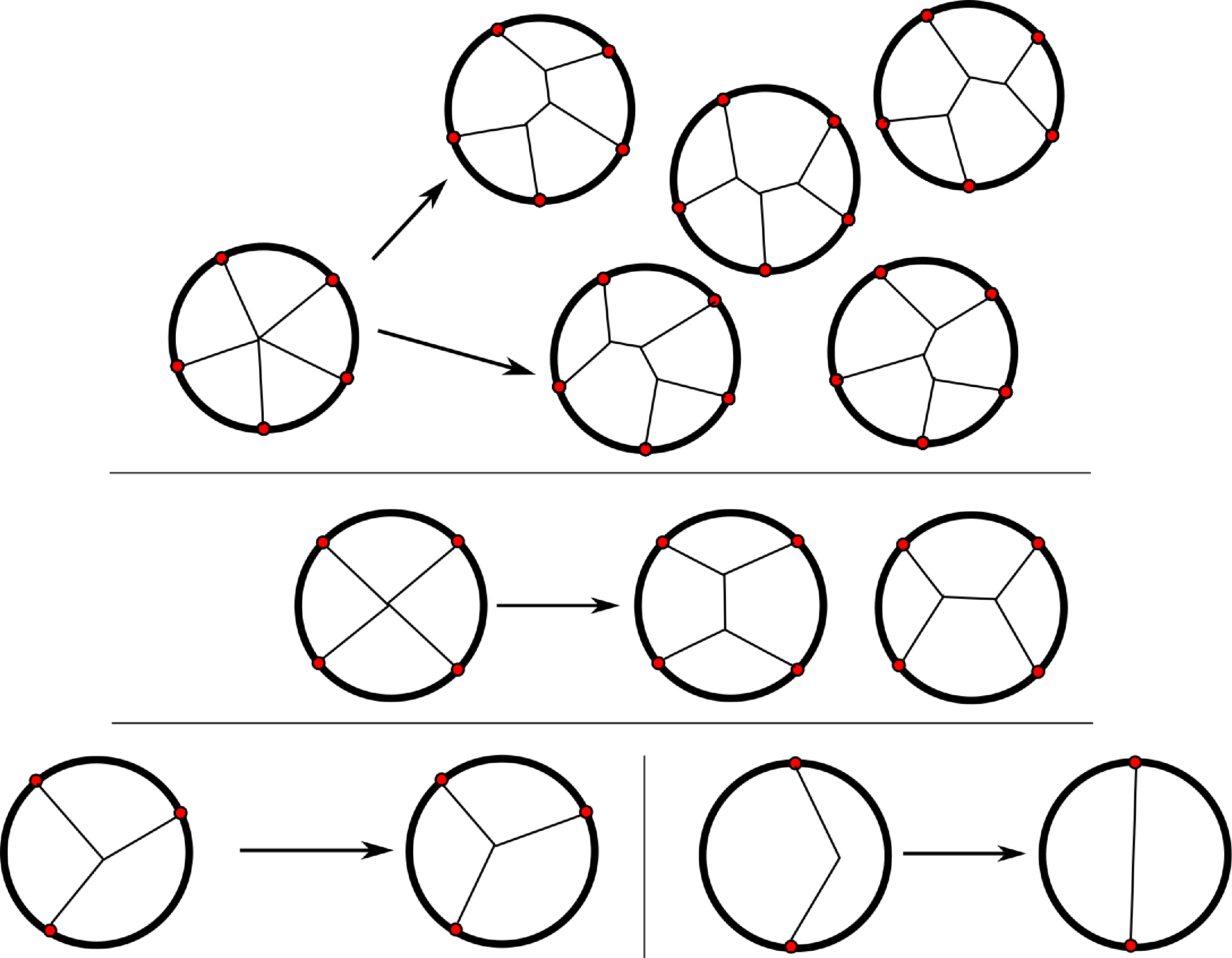}
\caption {\textbf{Continuation through a $k$-degree vertex. }\label{crittypes}
All trivalent trees embedded in the unit disk with $k$ labeled vertices on
the boundary for  
$k=2,  3, 4,$ and $5$. This set is in bijective correspondence with planar
rooted trivalent trees with $k$ leaves.  } 
\end{figure}

\begin{proof}[Proof of Theorem~\ref{topthm}]
We may trivially extend $T$ to a map into $S^2$. Suppose that  $T$ has $V$
vertices, $E$ edges, and $F$ faces. The number $F \geq 1$ since $T$ always
has  an exterior face in $S^2$.  Let $V_{\mathrm{int}} = V-k$ denoted the
vertices of $T$ which lie in the interior of $D$. Similarly, let $E_{\mathrm{int}}=E-k$
denote the number of edges of $T$ that are not adjacent to the $k$ vertices
on the boundary of $D$. Finally, let $F_{\mathrm{int}}$ denote the faces
of $T$ in the interior of $D$. By condition (iii) all faces $f \in F_{\mathrm{int}}$
must contain have than six sides. We will show that no such faces can exist.
This relies on an identity relating the number of edges and vertices in $T$.

All vertices $v \in V_{\mathrm{int}}$ are trivalent and all $k$ boundary
vertices have degree one.  We sum over the degrees of vertices in $T$ to
obtain
\begin{equation} \label{vertest}
2E = \sum_{v \in V} d(v) = 3V_{\mathrm{int}}+k \Rightarrow V_{\mathrm{int}}
= \frac{2E-k}{3}.
\end{equation}

We can also compare $E$ and $F$. Since each interior face $f \in F_{\mathrm{int}}$
has at least 7 edges and there are $F-1$ interior faces, we obtain 
\begin{equation}\label{totaledgelow}
7(F-1) \le\sum_{f \in F_{\mathrm{int}}}\#\{\mathrm{edges}( f)\}\end{equation}
In order to bound the right hand side of (\ref{totaledgelow})  we rewrite
\begin{align}
 \sum_{f \in F_{\mathrm{int}}}\#\{\mathrm{edges}( f)\} = \sum_{e \in E_{\mathrm{int}}}
\#\{f \in F_{\mathrm{int}}| e \in \partial f\},   
\end{align}
where $e \in \partial f$ means that $e$ is an edge of the face $f$. For each
edge $e \in E_{\mathrm{int}}$ the corresponding term in the sum  is either
$0$, $1$ or $2$.  Let
$E_{\mathrm{int}}^1$ denote the number of edges that are adjacent to a face
$f \in F_{\mathrm{int}}$ as well as the exterior face of $T$. Equivalently,
this is the
number of terms in the sum which are $1$.  Then
 \begin{align}
\sum_{e \in E_{\mathrm{int}}}
\#\{f \in F_{\mathrm{int}}| e \in \partial f\} \leq 1\cdot E^1_{\mathrm{int}}+2\cdot(E-k-E^1_{\mathrm{int}})
=2E-2k-E^1_{\mathrm{int}}.
 \end{align}
We show that  $E^1_{\mathrm{int}} \ge k$, which means
\begin{align} \label{upperface}
  \sum_{f \in F_{\mathrm{int}}}\#\{\mathrm{edges}( f)\}\le 2E-3k.
\end{align}
To this end,  recall that the vertices  $p_1, \dots, p_k$ are ordered cyclically.
Therefore, between any two vertices $p_j$ and
$p_{j+1}$ there exists a path of edges
$e_1^j, \dots, e_{l_j}^j$ that connect $p_k$ and $p_{k+1}$, and are also
edges of the exterior face of $T$. The sequence of edges
\begin{equation}
e_1^1,\dots, e_{l_1}^1, \dots, e_1^k,\dots,  e_{l_k}^k \label{circuit}
\end{equation} forms a nonintersecting circuit of length at least $k$, with
each edge
 in the circuit bordering exactly one face in $F_{\mathrm{int}}$. This shows
$E^1_{\mathrm{int}} \ge k $ and proves (\ref{upperface}).

The graph  $T$ has an Euler characteristic of  $2 =  V-E+F$. We use this
fact, along with  (\ref{vertest}) and (\ref{upperface}), to obtain
\begin{align}
F &= 2-k+E-V_{\mathrm{int}} \nonumber\\
 &=2+\frac{E}3-\frac{2k}3\nonumber\\
&\ge2+\frac{7(F-1)+3k}{6}-\frac{2k}3 \nonumber\\
&\Rightarrow F\le k-5 \label{contra}.
\end{align}
By the hypothesis of the theorem, $k \le 5$, and thus $F \le 0$. This is
a contradiction since $T$ always contains an exterior face.  Therefore, $T$
contains no interior faces, which means that it must be a tree. 
\end{proof}
\begin{proof}[Proof of Corollary~\ref{topcor}]
We have established that the graph $T$ is a tree embedded in the disk $D$,
with $k$ labeled vertices $\{p_j\}_{j=1}^k$ on the boundary, and trivalent
vertices in the interior of $D$. For $k = 2,3,4,5$, let $S_k$ denote this
set and $C_k=|S_k|$  denote its size. By splitting the tree into descendants
of a degree one vertex, a direct recursive calculation shows that 
\begin{equation}\label{catrec}
C_k = \sum_{i+j=k-1} C_{i+1}C_{j+1}.
\end{equation}
 The trees of relevance to us are shown in Figure~\ref{crittypes}. \end{proof}

The description of trees
above  is closely related the Catalan numbers (see~\cite{Stanley} for a variety
of examples). Specifically, using the recurrence (\ref{catrec}) for $k \ge
2$, $C_k$ is the $k-2$nd Catalan number.    
\begin{figure}
\centering
\includegraphics[width=.9\linewidth]{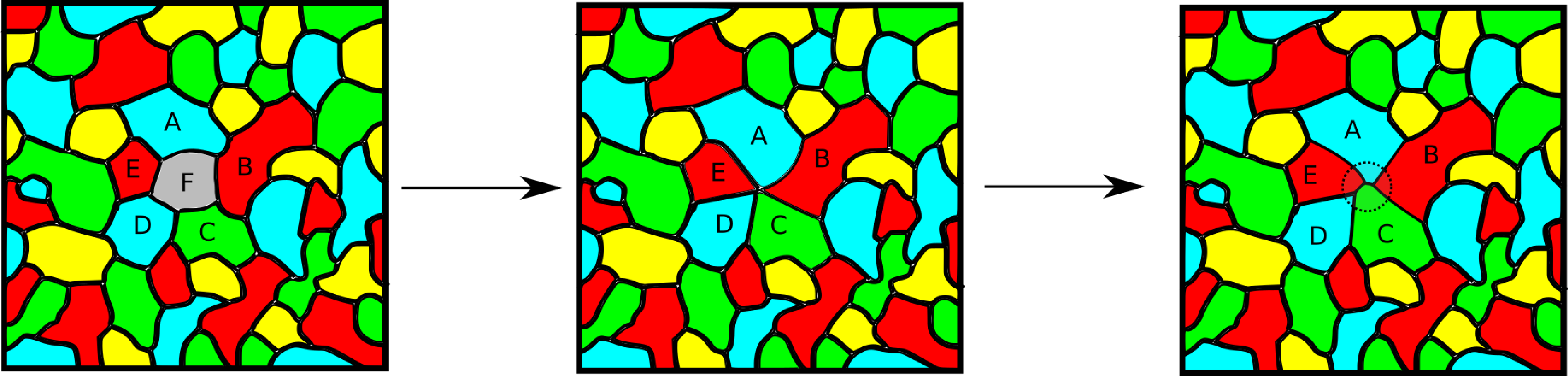}
\caption{\textbf{Trivalent map evolution. }Left: A network before trivalent
map evolution is applied to a five sided face, labeled $F$. Center: The annihilation
step, in which $F$ is contracted to a point. Right: The creation step, in
which a tree is glued at the degree five vertex, recovering the graph's trivalency.
} \label{fig:topchange}
\end{figure}

\subsection{Trivalent map evolution}
Theorem~\ref{topthm} allows us to formulate a Markov chain model for the
topological evolution during grain boundary coarsening on a compact surface
$S$. In order to completely describe a Markov chain, we need a countable
state space and a matrix of transition probabilites for jumps between states.
Our state space is $\mathfrak{T}(S) \cap \{\partial\}$, where $\partial$ denotes
a special, cemetery state. The neighbors of each state $M \in \mathfrak{T}(S)$
are characterized as follows:
\begin{enumerate}
\item {\bf Annihilation\/} : As in Section~\ref{subsec:attachment}, $\mathcal{O}(M)
\subset \mathfrak{M}(S)$, denotes the maps that may be obtained by contracting
an edge of $M$ or a $k$-sided face of $M$, $k \leq 5$, to obtain a map $N
\in \mathfrak{M}(S)$ with a distinguished vertex $v_*$.    
\item {\bf Creation\/}: A tree $T$ is glued to $N$ at $v_*$ to obtain a trivalent
map $M' \in \mathfrak{T}(S)$. The set of all such maps $M'$ is denoted $\mathcal{J}(M)$.
\end{enumerate}
Any assignment of transition probabilities $p_{M,M'}$ for each $M \in \mathfrak{T}(M)$,
$M' \in \mathcal{J}(M) \cap \{\partial\}$ determines a Markov chain. The cemetery
state is included to deal with the possibility that $\mathcal{J}(M)$ is empty.
In this case, $p_{M,\partial}=1$ and $p_{\partial,\partial}=1$. We call any
Markov chain of this form trivalent map evolution (TME).

A trivalent map $M \in \mathfrak{T}(S)$ must always have faces with fewer than
six sides when $S=\mathbb S^2$ or $S=\mathbb{T}^2$. We use Euler's formula
$V-E+F = \chi(S)$ along with the identities $3V = 2E $ and 
\begin{align}
E_{\mathrm{av}}\cdot F := \sum_{f \in F} \#\mathrm{edges}(f) = 2E
\end{align}
to see that the average number of edges in a face is given by\begin{equation}\label{avgsides}
E_{\mathrm{av}} = \frac{6}{\frac{\chi(S)}{2E}+1}.
\end{equation}
When $S=\mathbb S^2$, $\chi(S) = 2$ and $E_{\mathrm{av}}  < 6$, so that there
is always a face with fewer than six sides. When $S = \mathbb T^2$, $\chi(S)
= 0$ and we may always find a face with fewer
than six sides unless $M$ is a hexagonal tiling. For all other compact surfaces,
the genus $g \geq 2$, $\chi(S) = 2-2g \leq -2$, so that $ E_{\mathrm{av}}
> 6$. 
 
\subsection{Topological restrictions on kinetic models} \label{subsec:topstriction}
Kinetic models for grain boundary coarsening typically assume the following
{\bf grain coarsening rules\/} at topological transitions (see Figure \ref{distrules}):
\begin{itemize}
\item If $k = 2$, two neighboring grains will both lose two
sides.
\item If $k = 3$, three neighboring grains will each lose
one side.
\item If $k = 4$, two neighboring grains each lose
one side, and two neighboring grains retain their topology.
\item If $k = 5$, two neighbors each lose one sides,
one neighboring grain gains  one side and two neighboring grains retain
their topologies.
\item If an edge connecting vertices $p$ and $q$ vanishes, with $p$ and $q$
adjacent
to four distinct grains, then  two of these grains each lose one side
and the other two grains each gain one side.
\end{itemize}

\begin{figure}
\centering
\includegraphics[width=.85\linewidth]{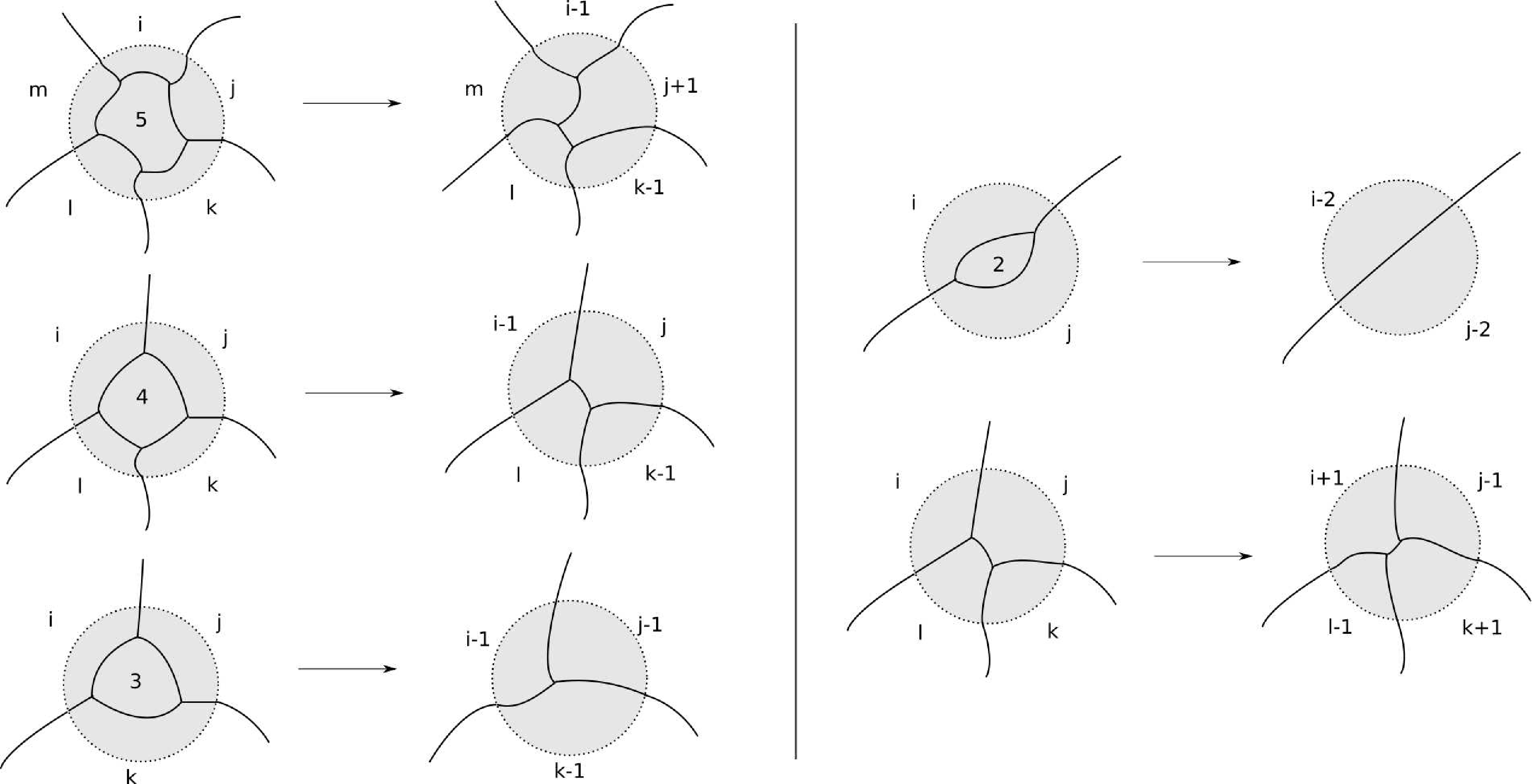}
\caption{\textbf{Changes  of topology before and after TME. }
A number in a grain refers to the number of its sides. When four and five
sided grains vanish, the tree that can be attached is not unique. However,
the net topological change is always the same.}
\label{distrules}
\end{figure}
 
However, trivalent map evolution may not follow these rules (see Fig. \ref{countersphere}).
An extra condition on the maps must be imposed so that TME is consistent
with kinetic theory. An additional hypothesis is required, which is that
all faces with  $k$ sides have  $k$  {\it distinct neighboring grains}, and
that an edge and its two vertices are adjacent to four distinct faces. Denote
this space as $\mathfrak N(S)$. For these  embeddings\footnote{The space
$\mathfrak N(S)$ is a proper
subspace  of closed 2-cell embeddings, in which the closure of each face
is homeomorphic to a closed disc.}, the following holds.

\begin{corollary} \label{cor:top}
Suppose $M \in \mathfrak N(S)$. For a face or edge selected in the annihilation
step of TME,  the topological change from $M$ to $M'$ follows the grain coarsening
rules.\end{corollary}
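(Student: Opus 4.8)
The plan is to reduce the statement to a purely local bookkeeping of how the number of sides of each neighboring grain changes under the composite operation (contract the vanishing face or edge to $v_*$, then blow up $v_*$ into a tree $T$), and then to evaluate this change on each of the finitely many trees enumerated in Corollary~\ref{topcor}. First I would fix the local picture for a vanishing $k$-gon $F$. Since $F$ is a $k$-gon in a trivalent map, its $k$ boundary vertices each carry one outward edge; label these $e_1,\dots,e_k$ in cyclic order, and let $b_i$ be the boundary edge of $F$ between the feet of $e_i$ and $e_{i+1}$. The neighbor across $b_i$ is a grain $G_i$, and the hypothesis $M\in\mathfrak{N}(S)$ is exactly what guarantees that $G_1,\dots,G_k$ are distinct, so that each neighbor occupies precisely one of the $k$ angular ``gaps'' between consecutive outward edges. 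This distinctness is what makes a per-grain accounting well defined: without it a single grain could sit in two gaps and simultaneously gain and lose sides.

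Next I would compute the change in one step. Locally $G_i$ has the three consecutive edges $e_i,b_i,e_{i+1}$ on its boundary. Contracting $F$ deletes $b_i$ and merges the feet of $e_i,e_{i+1}$ into $v_*$; blowing up $v_*$ into $T$ then bounds the gap occupied by $G_i$ by the unique path in $T$ between the boundary placeholders $p_i$ and $p_{i+1}$. Writing $d_i$ for the length of this path, and using that its two leaf edges are absorbed into $e_i$ and $e_{i+1}$ (the placeholders $p_i,p_{i+1}$ become $2$-valent and are smoothed, as in the discussion following condition (i)), I obtain that $G_i$'s boundary near $F$ passes from $3$ edges to $d_i$ edges, i.e. a net change of $d_i-3$ sides. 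A useful global check is that every edge of the tree borders exactly two gaps, so $\sum_i d_i = 2\,|E(T)| = 4k-6$ and the total change is $\sum_i(d_i-3)=k-6$, matching the removal of the $k$-gon itself.

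I would then read off $d_i$ from Figure~\ref{crittypes}. For $k=2$ the tree is a single edge, $d_i=1$, and each neighbor changes by $-2$; for $k=3$ the tree is a claw, $d_i=2$, and each of the three neighbors changes by $-1$. For $k=4$ each tree has two trivalent vertices joined by one edge, so consecutive leaves give $d_i\in\{2,3\}$ alternately, producing changes $-1,0,-1,0$; the two trees only permute which neighbors are affected, so the multiset of changes — two losing one side and two unchanged — is the same, as asserted in the caption of Figure~\ref{distrules}. For $k=5$ I would use that the three interior vertices always form a path with a ``cherry'' (two leaves) at each end and a single pendant leaf in the middle; the two cherry pairs are automatically consecutive on the boundary, contributing two changes of $-1$ (with $d_i=2$), while the unique consecutive pair joining the two ends traverses all three interior vertices, contributing one change of $+1$ (with $d_i=4$), and the remaining two gaps have $d_i=3$ and are unchanged. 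A short case check confirms that the multiset $\{-1,-1,0,0,+1\}$ is independent of which of the five embeddings occurs, giving exactly the stated $k=5$ rule.

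Finally, the vanishing-edge case is handled by the same method: collapsing an edge $\epsilon$ with endpoints $p,q$ merges them into a $4$-valent vertex $v_*$ carrying four outward edges, and the distinctness hypothesis makes the four incident grains $L,R$ (meeting both $p$ and $q$) and $T_p,B_q$ (meeting only one) distinct. Re-expanding $v_*$ with the $k=4$ tree complementary to the original pairing and running the same leaf-path count gives changes $-1$ for $L$ and $R$ and $+1$ for $T_p$ and $B_q$, which is the last grain coarsening rule. The main obstacle throughout is not any single computation but keeping the planar rotation data straight — correctly identifying which grain sits in which gap and which tree-path bounds it, and correctly applying the $2$-valent smoothing of the placeholders; once the cyclic orders are fixed each case is a one-line count, and the identity $\sum_i d_i = 2\,|E(T)|$ provides an independent sanity check on every case.
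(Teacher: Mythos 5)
Your argument is correct and follows essentially the same route as the paper's proof: use the distinctness of the $k$ neighboring grains guaranteed by $M\in\mathfrak N(S)$, and then directly verify the side-count changes for each of the finitely many trees enumerated in Corollary~\ref{topcor}. You simply carry out the case check more explicitly than the paper does, via the uniform formula ``change $=d_i-3$'' for the path length $d_i$ between consecutive placeholders, together with the consistency check $\sum_i d_i=2|E(T)|$.
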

\proof

\ If $M \in \mathfrak N(S)$, then the distinguished vertex $v_*$ arising
from  the annihilation step of TME will be of degree $k$, and will furthermore
  border $k$ distinct faces.  From Corollary 1, we can directly verify that
the grain coarsening rules hold  for all of the possible trees that we may
glue in the creation step of TME.  Note that the creation step for TME with
four and five sided grains can result in different possible topologies, but
in all cases either two grains lose one side for a vanishing four-sided grain,
or two grains lose a side and one grain gains a side for a vanishing five-sided
grain. 
\qed

\begin{figure}
\centering
\includegraphics[width=\linewidth]{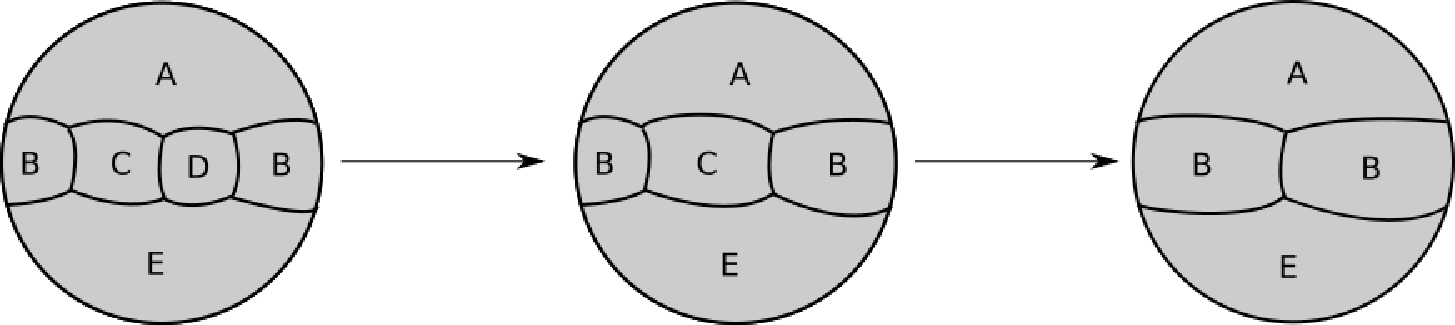} 
\caption{\textbf{Failure of grain coarsening rules in the sphere $\mathbb
S^2$.  }In all three networks shown, the face $B$ wraps around the back 
of the sphere. Left: A network of five grains in $\mathfrak N(\mathbb S^2)$.
Center: The network after face $D$ has been removed through TME. This network
is  in $\mathfrak M(\mathbb
S^2)$, but no longer in $\mathfrak N(\mathbb S^2)$, as the face $C$ has four
edges, but only three neighbors. Right: The network after  face $C$ has been
removed through TME. The grain coarsening rules in this case do not hold
(here,  $A,B,$ and $E$ each lose one edge).      } \label{countersphere}
\end{figure}

\section{ Topological kinetic equations } 
\label{sec:mspecgrain}

The major motivation for the PDMP model is  to create a particle system representation
for mean field models in grain boundary coarsening.      In this section
and Section \ref{sec:tunableparam}, we will   assign values to each of the
parameters defined in Section~\ref{sec:model}  to   correspond with key coarsening
properties. In this section we will assign weights and mutation matrices
related to the topological requirements of the grain coarsening rules enumerated
in Section \ref{sec:tops}.   This allows us to write  topological kinetic
equations, which  should be considered as the most general set of kinetic
equations for describing  topological networks which evolve by TME and respect
the grain coarsening rules. In Section \ref{sec:tunableparam}, we will consider
other   
parameters associated with edge deletion and first neighbor correlations.
The free parameter space is sufficiently flexible to incorporate the correlation
and edge-deletion assumptions of several of the models described in Section
5.

We now assign parameters for the PDMP model that we will consider fixed for
all of the simulations carried out in Section \ref{sec:results}.    As mentioned
in Section~\ref{sec:model}, particles correspond to individual cells of a
two-dimensional grain network.  A particle's species $s$ corresponds its
number of sides, and its position $x>0$ corresponds to its area. The most
immediate of these  is setting species velocity of $v_s = \frac \pi 3(s-6)$,
which is exactly the von Neumann-Mullins  relation in which we set material
constant in (\ref{eq:von-neumann}) to $c= \pi/3$. Since grains with more
than 10 sides are quite rare, we also cap the maximum number of sides at
$M = 15$.    Our models do not consider one-sided grains,
as they are relatively rare in actual metal networks (around .1\%, see \cite{fradkov1985experimental}).
To keep  the system closed, we
forbid two-sided grains to lose edges,  three-sided grains to lose two edges,
and $M-$sided grains to gain an edge at a critical events.

We also assign values for parameters related to topological transitions in
  networks which remain in $\mathfrak
N(\mathbb T^2)$ during their evolution. This includes  $K^{(l)}$, the number
of grains affected by deletion events, and the mutation matrices $R_{kj}^{(l)}$,
which specify transitions for each such grain. Values for   $K^{(l)}$ are

\begin{equation}
K^{(2)}=2, \quad K^{(3)}
= 3, \quad K^{(4)}= 2, \quad K^{(5)}= 3,\quad K^{(0)} = 4,
\end{equation}
  and  the mutation matrices are defined as\begin{align}
R_{kj}^{(2)}&= \begin{cases}k-2, & k \in \{4,\dots, M\},\ j\in \{1,2\}, 
\label{startrules} \\
0, & k \in \{2,3\} ,  \ \\
\end{cases}\\
R_{kj}^{(3)}&= \begin{cases}k-1, & k \in \{3,\dots ,M\},\ j\in \{1,2,3\},
\\
0, & k =2,  \ \\
\end{cases}\\
R_{kj}^{(4)}&= \begin{cases}k-1, & k \in \{3,\dots ,M\},\ j\in \{1,2\}, \\
0, & k =2,  \ \\
\end{cases}\\
R_{kj}^{(5)}&= \begin{cases}k-1, & k\in \{3, \dots,M-1\},\ j \in \{1,2\},
\\
k+1, & k\in \{3 ,\dots,M-1\},\ j=3,\\
0, & k\in \{2 ,M\},\\
\end{cases}\\
R_{kj}^{(0)}&= \begin{cases}k-1, & k\in \{3, \dots,M-1\},\ j \in \{1,2\},
\\
k+1, & k\in \{3 ,\dots,M-1\},\ j\in\{3,4\},\\
0, & k\in \{2 ,M\}.\\
\end{cases} \label{endrules}
\end{align}  

Recall the upper index $l= 2, \dots M_-$  in $R_{kj}^{(l)}$ refers to the
number of sides for the deleted grain, $k= 2,\dots, M$ refers to the number
of sides of a neighboring grain before undergoing topology reassignment,
and $j= 1\dots, K^{(l)}$ is the index of the $j$th reassignment. Refer to
Fig. \ref{distrules} for a pictorial
description of the distribution of sides before and after grain and edge
deletions.
Note that to keep the particle system closed, we disallow mutations that
create 1 and $M+1$ sided  grains.

With these defined parameters, we now give a more explicit form of the limiting
kinetic equations of (\ref{eq:kinetic})
for models of grain boundary coarsening. Assuming a continuous area density
 $f_n(a,t)$ for $n$-sided grains at time $t$,
we  can write a system of \textbf{topological kinetic equations}  for grains
with $n = 2, \dots, M$  sides as\begin{equation}\label{grainpde}
\partial_tf_{n}(a,t)+(n-6)\partial_af_{n}(a,t) = h^{n+}_{\mathrm{grain}}(f,t)-h_{\mathrm{grain}}^{n-}(f,t)+h^{n+}_{\mathrm{edge}}(f,t)-h_{\mathrm{edge}}^{n-}(f,t)
\end{equation}
for $a,t\ge 0$.

The four source terms of the right hand side of (\ref{grainpde}) describe,
in order, addition and deletion  of $n$-sided grains due to grain deletions,
and addition and deletion of $n$-sided grains due to edge deletions.
By directly substituting values of  $v_s$ and $R_{kj}^{(l)}$ into  (\ref{eq:kinetic}),
we may write,  for $n = 2, \dots, M$,
\begin{align} \label{hgrainbegin}
h_{\mathrm{grain}}^{n,+}(f,t)&= 8f_2(0,t)W_{n+2}^{(2)}(t)f_{n+2}(a,t)+9f_{3}(0,t)W_{n+1}^{(3)}(t)f_{n+1}(a,t)\\
&+4f_{4}(0,t)W_{n+1}^{(4)}(t)f_{n+1}(a,t)+
2f_{5}(0,t)W_{n+1}^{(5)}(t)f_{n+1}(a,t)\nonumber
\\&+f_{5}(0,t)W_{n-1}^{(5)}(t)f_{n-1}(a,t),
\nonumber\\
h_{\mathrm{grain}}^{n,-}(f,t) &= f_n(a,t)[8f_2(0,t)W_{n}^{(2)}(t)+9f_{3}(0,t)W_{n}^{(3)}(t)\\&+4f_{4}(0,t)W_{n}^{(4)}(t)+3f_{5}(0,t)W_{n}^{(5)}(t)],
\nonumber\\
h_{\mathrm{edge}}^{n,+}(f,t)&=2\beta (t)\gamma(t)[w_{n-1}^{(0)}f_{n-1}(a,t)+w_{n+1}^{(0)}f_{n+1}(a,t)]
,
\\
h_{\mathrm{edge}}^{n,-}(f,t)&= 4\beta(t) \gamma(t)w_n^{(0)}f_n(a,t),
 \label{hgrainend}
\end{align}
for tier weights $w_n^{(l)},w_n^{0}$ and species selection weight fractions
$W_n^{(l)}(t),\gamma(t)$ defined in (\ref{eq:numbers-not2}).

\section{Previous kinetic models } \label{sec:kinmodels}

In this section, we review several previous kinetic models which describe
area
densities $f_n(a,t)$ of $n$-sided grains at time $t$.
All
of the models assume that  a  neighbors are uncorrelated. Specifically,
if we consider a single
edge for some  $k$-sided grain, the probability that another grain with $n$
sides  is  its neighbor
 is  $n/(S-k)$, where $S$ is the total number  of sides
for all grains in the system. Three of the models mentioned here, those of
Beenakker, Marder, and Flyvbjerg, assume no edge deletion. 
  
\subsubsection{The model of Beenakker}The model posed by  Beenakker \cite{beenakker1986evolution}
begins by assuming grains are shaped like regular polygons, except having
edges of circular arcs meeting at $120^\circ$.  This assumption gives the
relation $P = \sqrt{a/\phi(n)}$ for an $n-$sided grain with area $a$ and
perimeter $P, $  where $\phi(n)$ has an explicit (albeit lengthy) form. For
distributions $f_n(a)$ for $n$-sided grains, an isotropic network in which
thermal effects are neglected has a  free energy given by the total perimeter

\begin{equation}
F = \int_0^\infty \sum_{n = 2}^\infty\   \sqrt{a/\phi(n)}f_n(a).
\end{equation}

To obtain an equilibrium state, Beenakker minimizes  $F$ over possible  densities
$\{f_n(a)\}_{n \ge 2}$ having with a fixed area $A$ and zero polyhedral defect,
meaning 
\begin{equation}
\int_0^\infty \sum_{n = 2}^\infty(n-6)f_n(a)da = 0. \label{polycon}
\end{equation}
It can be shown that  the minimizer for this variational problem gives a
unique, explicit  assignment $n_c(a)$ of sides for all grains with area $a$.
 Thus,  densities take the form 
\begin{equation}
f_n(a) = f(a)\mathbf 1_{n = n_c(a), }
\end{equation}

From here, Beenakker invokes the $n-6$ rule to  arrive at a simple
evolution of area densities $f(a,t)$ for time $t \ge 0$, given by   \begin{equation}
\partial_tf(a,t)  +c\partial_a (f(a,t)(n_c(a)-6)) = 0.\label{beenakin}
\end{equation} In a numerical integration of (\ref{beenakin}), Beenakker considers initial
conditions of mostly hexagonal cells with a small fraction of pentagon-heptagon
pairs. As expected, defects initially propagate in the system.  However,
a ``collapse" eventually occurs in which   the system rapidly returns to
having  near zero defect.   The network then repeats this pattern indefinitely,
oscillating between   ordered
and highly disordered states. 

\subsubsection{The model of Marder}
    Marder's model \cite{marder1987soap} is unique in that it sets a rule
for which neighbors lose an edge when a four-sided grain vanishes, choosing
the two smallest grains.  Similarly, when a five-sided grain vanishes, its
two smallest neighbors lose an edge,  and the largest neighbor gains an edge.
  The kinetic equations have the form\begin{align}
&\partial_t f_n(a,t) +c(n-6)\partial_af_n(a,t) = u_{n-1}(a,t)\frac{n-1}{S(t)}f_{n-1}(a,t)\\&-(u_n(a,t)+d_n(a,t))\frac{n}{S(t)}
f_n(a,t)+d_{n+1}(a,t)\frac{n+1}{S(t)}f_{n+1}(a,t). \label{marder}
\end{align}

Here $u_n(a,t)$ and $d_n(a,t)$ are the rates that $n$-sided grains of area
$a$ gain or lose a edge, respectively, and $S(t) $ is the sum of all sides
over all grains at time $t$.  Expressions for $u_n$ and $d_n$ are complicated,
and involve probabilities $p(a,t)$ that a grain selected at time $t$ has
area greater than $a$. A scaling analysis correctly shows that coarsening
should be linear, and numerical approximations for  (\ref{marder}) give statistics
for topological frequency and coarsening that were comparable to experimental
results on soap bubble networks. 

\subsubsection{The model of  Flyvbjerg}
  Kinetic equations for Flyvbjerg's model \cite{flyvbjerg1993model} are of
the form
\begin{equation}
\partial_t f_n(a,t)+c(n-6)\partial_af_n(a,t) =\sum_{m =n- 1}^{n+1}T_{n,m}(f)f_m(a,t),
\quad n = 0,1, \dots  .,\label{flyvbjerg}
\end{equation}
where
tridiagonal coupling is given by\begin{align}
&T_{n,m} =\begin{cases}  c_+(n-1) & m = n-1, \\
\bar A -(c_+-c_-)n  & m= n, \\
c_-(n+1) & m = n+1, \\
0 & \hbox{otherwise,} \\
\end{cases} 
\\
&c_+ = \frac \pi{18}(n-6)f_5(0,t), \quad c_- = \frac \pi{18}\sum_{k = 0}^5(k-6)^2f_k(0,t)+c_+,\\
&\bar A= \frac \pi 3 \sum_{k = 0}^5(6-k)f_k(0,t).
\end{align}
The transition rates $T_{n,m}$ for grains evolving from $m$ to $n$ sides
contain boundary terms $f_k(0,t)$ which make (\ref{flyvbjerg}) nonlinear.
 An assumption for determining  $T_{n,m}$ is that networks have zero first-neighbor
correlations.    This means that for any grain with $l$ sides, the probability
that a neighboring grain has $j$ sides  is independent of $l$, and proportional
to $j$. To keep $T_{n,m}$ in a simplified form, Flyvbjerg assumptions required
his
model to allow for 1 and 0 sided grains.  Simulations later showed these
topologies were essentially negligible. In general, the comparison between
Flyvbjerg's model and experimental
data is surprisingly accurate, with frequencies of topologies differing by
only a few percentage points.

\subsubsection{ The model of Fradkov} The Fradkov model, posed in 1988 as
a `gas approximation' to grain coarsening \cite{Fradkov1,Fradkov2}, has kinetic
equations of the form
\begin{equation}
\partial_t f_n(x,t)+c(n-6)\partial_af_n(a,t) =\Gamma(f(t))(Jf)_n(a,t). 
\end{equation}
The collision operators $(Jf)_n$, $n \ge 2$,  account for topological transitions,
taking the form
\begin{align}
(Jf)_2 &= 3(\beta_{\mathrm{RD}}+1)f_3-2\beta_{\mathrm{RD}}f_2 \label{jeqn1}\\
(Jf)_n &= (\beta_{\mathrm{RD}}+1)(n+1)f_{n+1}-(2\beta_{\mathrm{RD}}+1)nf_n+\beta_{\mathrm{RD}}(n-1)f_{n-1},
\quad
n>2. \label{jeqn2}
\end{align}
Like the model of Flyvbjerg, topological transitions assume networks have
zero correlation for first neighbors. Furthermore, collision operators also
include a removal driven deletion assumption in which  
the ratio between edge deletion and
grain deletion  is a fixed constant $\beta_{\mathrm{RD}}$.  
Thus, if $S_F(t)$ denotes the total number of edge deletions under the Fradkov
model, then 
\begin{equation}\label{fradform}
S_F(t) = \beta_{\mathrm{RD}}(N(0)-N(t)). 
\end{equation}The quantity $\Gamma(f(t))$ is then chosen  to conserve
total area, and invokes   nonlinearity in the system. To keep the $J$ tridiagonal,
topological effects from two-sided grains  were ignored.  The addition of
an edge resulting from a five sided grain is also ignored. Derivations for
both
(\ref{jeqn1})-(\ref{jeqn2}) and $\Gamma(f(t))$ can be found in \cite{Henseler1}.

\subsubsection{The BKLT model}

The model of Barmak, Kinderlehrer, Livshits, and Ta'asan \cite{barmak2006remarks},
or BKLT model, has  topological coupling  with the same general  form as
the right hand side of (\ref{flyvbjerg}).  In the absence of edge deletion,
the coupling terms $T_{n,m}^{\mathrm{BKLT}}(f)$ are determined through the
conservation of polyhedral defect (\ref{polycon}) to be
\begin{align}
T_{n,m}^{\mathrm{BKLT}} =\begin{cases} \frac{\psi(t)}{F_{m}(t)}a_m & m =
n,n+1, \\
0 & \mathrm{otherwise}. \\
\end{cases} 
\end{align} 
Here, $\psi(t)$ is the rate of boundary deletion given by
\begin{equation}
\psi(t) = \sum_{n = 3}^5 (n-6)^2 f_n(0,t),
\end{equation}
and $a_i$ are free constants which satisfy $\sum_{i \ge 3} a_i = 1$.  These
constants are free, and can be fit through comparisons with experiments.

In \cite{cohen2010stochastic}, Cohen gives a stochastic description of the
governing equations for the BKLT model in which a solution has an interpretation
of a conditional backwards expectations  of a sample tagged grain.  
We mention that in   \cite{cohen2010stochastic}, Cohen considers the  one-dimensional
minimal model
\begin{equation}
\partial_t f(x,t)-\partial_xf(x,t) = -\frac{f(0,t)f(x,t)}{\int_{\mathbb R_+}
f(x,t)}, \quad x,t>0 \label{onespec}
\end{equation}
as an explanatory example.  In \cite{klobusicky2017concentration}, two of
this paper's authors (JK and GM) considered  (\ref{onespec}) as a hydrodynamic
limit of a one-species stochastic particle system on the positive real line,
and proved    exponential convergence of empirical densities to this limit.

\section{ Parameter fitting with direct numerical simulations }\label{sec:tunableparam}

In order to complete the specification of our stochastic particle model,

it remains to provide expressions for  the  weights $w^{(l)}_k$, which will
represent topological frequencies of a grain's neighbors, and the interior
event intensity
$\beta$, which will represent the rate of edge deletion.   
For parameter fitting, the particle models will be compared with  direct
simulations of network
flow, with data obtained from a  level set method of Elsey, Esedoglu,
and Smereka \cite{Elsey2}. The numerical simulation  is performed with 667,438
initial
grains,  determined from a Voronoi tesselation of the
unit square with periodic boundary conditions.  Grains evolve
 for a total time of  $2.384\times 10^{-5}$, with
updates given over two hundred uniform time steps ($\Delta t = 1.192 \times
10^{-7}$).
The dataset   from this simulation consists of each grain's area and topology
(included
whether the grain vanished) over each time step. 

\subsection{First-neighbor correlations}\label{subsec:parweight}

From the species selection probabilities   (\ref{eq:flip-probs}) and (\ref{eq:flip-probs-int}),
the parameters $w_k^{(l)}$  allow us to weight the frequency of grains with
$k$ sides which border grains with 
$l$ sides. 
Similar to the models of Flyvbjerg and Fradkov, we may assume frequencies
are independent
of the topology of the vanishing grain, and are only proportional to $k$,
 meaning $w_k^{(l)} \propto
k.$  In this case, imposing again that grains have between 2 and $M$ sides,
we obtain \textbf{uncorrelated
weights}
\begin{align}\label{sideweights}
w_k^{(2)}&= \begin{cases}k, & k \in \{4,\dots, M\},   \ \\
0, & k \in \{2,3\},  \ \\
\end{cases} &
w_k^{(3)}&= \begin{cases}k, & k \in \{3,\dots, M\},   \ \\
0, & k =2,  \ \\
\end{cases}\\
w_k^{(4)}&= \begin{cases}k, & k \in \{3,\dots, M\},   \ \\
0, & k =2,  \ \\
\end{cases}&
w_k^{(5)}&= \begin{cases}k, & k \in \{3,\dots, M-1\},   \ \\
0, & k \in\{2,M\},  \ \\
\end{cases}\\
w_k^{(0)}&= \begin{cases}k, & k \in \{3,\dots, M-1\},   \ \\
0, & k \in\{2,M\}.  \ \\
\end{cases}
\end{align}

Experiments on grain networks have shown that nontrivial nearest neighbor
 correlations do in fact exist, with grains having fewer sides tending to
neighbor higher sided grains, and vice versa.  The most popular relation
  is Aboav's law~\cite{aboav1970arrangement}, an empirical observation
relating the average number of neighbors  $m_n$ for an $n$-sided cell as
\begin{equation}
nm_n = (6-a)n+b,
\end{equation}
where $a$ and $b$ are numerical constants. For our purposes, we are actually
able to incorporate more information, and consider  distributions $\{c_k^{(l)}\}_{k
= 1}^M$ which give the probability of a $l$-sided cell to have a neighbor
with $k$ sides, assuming the network to  be stabilized.  This distribution
is not provided in the dataset provided from the  level set used, so we instead
use  results  in   \cite{meng2015study} which simulated grain boundary coarsening
through a Monte Carlo-Potts model.  With these distributions, we may estimate
\textbf{correlated weights} $\tilde w_k^{(l)}$ from the level set simulation.
 This is done by using using topological frequencies $p_k$ simulated from
the level
set model which give the frequency of grains with $k$ sides.  This is done
at time $t = 5.364
\times 10^{-6}$, when about 80\% of grains have vanished, and frequencies
are essentially stable for the remainder of the simulation.  Neighbor distributions
for a $l$-sided grain in the PDMP model may then be written   in terms of
 $p_k$ and $\tilde w_k^{(l)}$, and then compared with  species selection
probabilities
to $c_k^{(l)}$ to obtain the linear system of equations in $\tilde w_k^{(l)}$,
given by \begin{align} 
\frac{p_k \tilde w_k^{(l)}}{\sum_{n = 1}^M p_n \tilde w_n^{(l)}} &= c_k^{(l)},
\quad k = 1, \dots, M, \\ \sum_{n = 1}^M \tilde w_n^{(l)} &= 1.
\end{align}
 Comparison between  the weights $ w_k^{(l)}$ and $\tilde w_k^{(l)}$ are
given in Figure \ref{fig:corrgraph}.

\begin{figure}
\centering
\includegraphics[width=.6\linewidth]{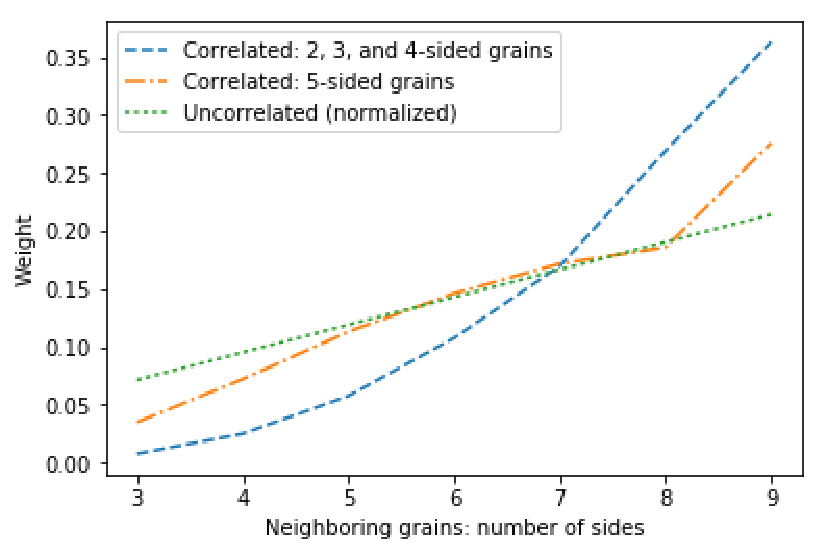}
\caption{\textbf{Correlated and uncorrelated weights}. For each graph, the
independent variable denotes the number of sides $k$ for neighboring grains,
and the  dependent variable is either the correlated weight $\tilde w_k^{(l)}$
or normalized uncorrelated weight $ w_k^{(l)}$.  Since instances for two
and three sided grains in \cite{meng2015study} were rare, frequencies for
four-sided grains were used to determine $\tilde w_k^{(l)}$ for $l = 2,3,4$.}
\label{fig:corrgraph}
\end{figure}

\subsection{Edge deletion}\label{subsec:parside}

\subsubsection{Population-driven vs.\  removal-driven edge deletion}
We now describe how to fit the rate function $\beta(t)$ for several varieties
of edge deletion models.  The most simplified model, of course, is to assume
no edge deletion at all.  We may incorporate the no deletion assumption into
the topological kinetic equations easily by setting $\beta(t) \equiv 0$,
 which we call the   \textbf{no edge deletion (ND)
model}.

 Edge deletion for the topological kinetic equations at any fixed time $t$
is governed by
a Poisson process with rate $\beta(t) N(t)$, where $N(t)$ is the total number
of grains at time $t$.  If we assume that $\beta(t) \equiv \beta_{\mathrm{PD}}
$ is constant, then we call the resulting particle system a \textbf{population-driven
edge deletion (PD) model.}   We can fit $\beta_{\mathrm{PD}}$ by   comparing
$\beta_{\mathrm{PD}} N(t)$
against edge deletion rates in experimental  data.

We may also consider a \textbf{removal-driven edge deletion  (RD)  model}
motivated by Fradkov's model \cite{Fradkov1,Fradkov2},
which imposes that the total
number of edge deletions is proportional to the total number of grain deletions.
To derive such an expression of $\beta(t)$, we begin with an an  explicit
estimate for the total number $N(t)$. 
This estimate assumes a  linear coarsening rate $\alpha>0$ so that
for a system with
total area $A$, the average grain  size $\langle A_t\rangle$ at time $t$
takes the form
\begin{equation}
\langle A_t\rangle =\frac{ A}{N(0)}+ \alpha t. \label{linearcoarsening}
\end{equation}
This assumption, although not rigorously shown,
is quite accurate in practice. Indeed, a linear regression on the coarsening
rate for the level set data gives a Pearson correlation coefficient $R>.999$.
  With this assumption,   the total number is  
\begin{equation}\label{lingrow}
N(t) = A/\langle A_t\rangle = \frac {AN(0)}{A+N(0)\alpha t}.
\end{equation}

If $S_{\mathrm{RD}}(t)$ denotes the total number of edge deletions under
the RD
model up to time $t$, then 
\begin{equation}\label{sfform}
S_\mathrm{RD}(t) = \beta_\mathrm{RD}(N(0)-N(t)). 
\end{equation}
The quantity $\beta_\mathrm{RD}$ may be found  by fitting against level set
data. On
the other hand, for the PD model, the total amount of edge deletion
is
\begin{equation}\label{smform}
S_{\mathrm{PD}}(t) =   \int_0^t N(s)\beta(s) ds.
\end{equation}
 We equate   $S_\mathrm{RD}(t) $ with $S_{\mathrm{PD}}(t) $ in (\ref{sfform})
and (\ref{smform}), so that 
\begin{equation} \label{fraddiff}
\int_0^tN(s)\beta(s)ds =\beta_\mathrm{RD}(N(0)-N(t)). 
\end{equation} 
Using (\ref{lingrow}),  we may easily solve
for $\beta(t)$ in (\ref{fraddiff}) to obtain
the autonomous form
\begin{equation}
\beta(t) =\alpha \beta_\mathrm{RD} N(t).
\end{equation}
To summarize, in implementing the $M$-species model, we obtain edge deletion
similar to RD model assumptions by setting the rate of the Poisson clock
to $\beta(t)N(t) = \alpha\beta_\mathrm{RD}N(t)^2$. 

Fitted values for
$\alpha,  \beta$, and $\beta_\mathrm{RD}$ are given in Table \ref{table:flip}.
The
coarsening rate $\alpha$ was found through linear regression. The constant
edge deletion rate $  \beta_{\mathrm{PD}}$ was obtained by minimizing the
$L^1$ distance over
time between the rate of  edge deletion and $  \beta_{\mathrm{PD}} F(t)$.
 Similarly, the
Fradkov parameter $\beta_\mathrm{RD}$ was obtained by minimizing $L^1$ distance
over
time of the fraction of grains removed over edge deletions.
See Figure \ref{fig:fitting} for graphs illustrating edge deletion behavior
over time. Note that there appears to be a larger number  edge
deletions at the beginning of the simulation, possibly due to a burn-in
period which adjusts to initial conditions of a Voronoi tessellation.

\subsubsection{Estimating total edge deletion}
We note here that  tracking systems for grain networks in most level set
methods, including
the one used in this study, only track individual faces, and not edges. While
we do not have precise data on edge deletion, we will present a method
 for estimating independent edge deletion using the level set dataset. 
This uses the available data of topologies and areas for each grain at each
time step $t_k = k\Delta t$. The  calculations for estimating total edge
deletions  are approximate for two
reasons. First, we assume that each grain  changes its number of sides at
most once during
a time step.  Second, we ignore possible edge deletions occurring immediately
before
grain deletion,  and thus assume that 
a grain changes its number of sides  from either a vanishing neighboring
grain, or an
edge deletion from a  neighboring grain of typical size.  

To estimate total edge deletion,  we record grains $(a_1^k, s_1^k), \dots,
(a_g^k, s_g^k)$ which
vanish
in the time interval  $[t_k, t_{k+1})$.  Under our assumptions,   the  total
number of remaining
grains which change their number of sides  as a result of grain deletion
is then

\begin{equation}
\Delta E_k^f = \sum_{i = 1}^g (3\cdot \mathbf 1_{s_i ^k= 3}+2\cdot \mathbf
1_{s_i ^k= 4}+ 3\cdot \mathbf 1_{s_i^k = 5}).
\end{equation}
Let $\Delta E_k$ denote the total number of grains affected by both grain
deletion and edge deletion. Since each edge deletion affects four neighboring
grains, the total
number of
topological changes not due to grain deletions is then estimated
as
\begin{equation}
\Delta S_k = \frac{\Delta E_k- \Delta E_k^f}4.
\end{equation}

\section{Numerical results}
\label{sec:results}

In this section, we will simulate six varieties of PDMP models for coarsening,
 using either correlated or
uncorrelated weights, and edge deletion rates assumptions that follow either
the RD, PD, and ND models shown  in Table \ref{table:flip}. 
Each of these models are also compared with a level set method. For each
model, 200,000 grains are sampled for initial conditions. Initial areas and
topologies for the particle system model were  selected
through sampling with replacement from the initial empirical distribution
of the level set model.  After this selection, a small number of grains were
then modified to produce a distribution with exactly 0 polyhedral defect,
and like the level
set simulation,  evolve for a total time of   $t = 2.384\times 10^{-5}$.

\begin{table}
\centering
\begin{tabular}{|c||c|c|c|}
\hline
Model & ND &    PD &  RD \\
\hline
Switching & $\beta(t) =  0$ &    $\beta(t) =  \beta_{\mathrm{PD}}$ &   $\beta(t)
= \alpha\beta_\mathrm{RD}N(t)$ \\\hline
Fitted value &N/A & $\beta_{\mathrm{PD}} = 75072.74$ & $\alpha = 1.27,\beta_\mathrm{RD}=
2.02 $ \\\hline
\end{tabular}
\caption{\textbf{Parameter fitting for edge deletion models}. For the RD
model, the parameter $\alpha$ corresponds to the coarsening rate, given the
linear coarsening assumption in (\ref{linearcoarsening}).}\label{table:flip}

\end{table}

\begin{figure}
\subfloat{\includegraphics[width=.49\linewidth]{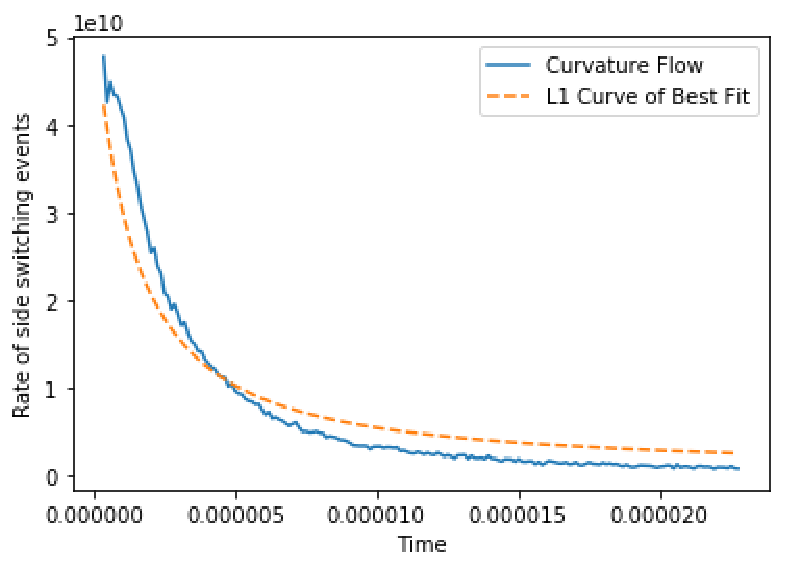}}
\subfloat{\includegraphics[width=.49\textwidth]{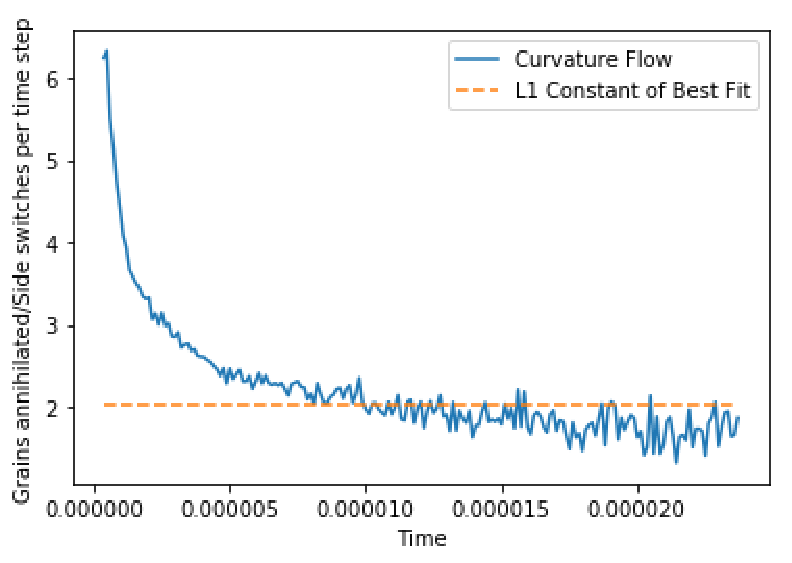}}
\caption{\textbf{Parameter fitting for PD  and RD models.} Left: Fitted curve
for the rate of edge deletion  $\beta_{\mathrm{PD}}N(t).$ Right: Fitted constant
of the  fraction of edge deletions over grain
deletions $\beta_{\mathrm{RD}}$. }  \label{fig:fitting}
\end{figure}

Comparisons of grain statistics between the level set method and the three
particle models are given in Figures \ref{fig:tops}-\ref{fig:7areacorr}
and Table \ref{table:metrics}.
In Table \ref{table:metrics}, we use the total variation metric, which for
two discrete probability vectors $p = (p_1, \dots, p_n)$ and $q = (q_1, \dots,
q_n)$ is given by the distance
\begin{equation}
d_{TV}(p,q) = \frac 12 \sum_{i = 1}^n|p_i-q_i|.
\end{equation}
Distances between area distributions for grains with 5, 6, and 7 sides are
measured with the Kolmogorov-Smirnov metric, which for two cumulative distribution
functions $F,G:[0,\infty)\rightarrow [0,1]$ is given by the distance
\begin{equation}
d_{KS}(F,G) = \sup_{x \in \mathbb R^+} |F(x)-G(x)|.
\end{equation}

\subsection{Topological frequencies of grains }
Figures \ref{fig:tops} and \ref{fig:corrtops} show   topological frequencies
of grains  with snapshots occurring at times $10k\cdot \Delta t_k$ for $k
= 1,\dots, 20$.  Figure \ref{fig:topstimes} shows topologies at $t = 5.364
\times 10^{-6}$, which corresponds to the time when approximately 20\% of
grains remain from the level set method, and also at all simulations' end
time of $t = 2.384\times 10^{-5}$ at which 31,887 grains remain from the
level set method ($\sim 4.8\%$ of initial grains). 

For the level set model, topological frequencies stabilize quickly, with
a minor trend for six sided grains to become less frequent, and five sided
grains more frequent. Both the ND and RD models stabilize quickly, regardless
of whether correlated weights are considered.  Topological frequencies under
the PD model with uncorrelated weights tend to become more uniform with time.
 Adding correlated weights, however, appears to reduce variance.  
 
It has already been observed that higher rates of edge deletion are associated
with more uniform distributions of grain topologies \cite{Fradkov1}.
Thus, it is not surprising that the deletion-free ND model tends to concentrate
near its mean of six sides more than the other models. For uncorrelated weights
at $t = 5.364
\times 10^{-6}$, the ND and PD models differ from the level set method by
a few percentage points, whereas the RD model is substantially more uniform.
However, as frequencies under the PD model become more diffuse in time, as
opposed to other models, by $t =2.384\times 10^{-5}$ differences between
the PD and level set models are magnified. 

The addition of correlated weights appears to have two effects on topological
frequencies.  First, as noted before, diffusion is slowed under the PD model.
 Second, adding correlation for weights reduces variance.  For the RD model,
this reduces accuracy (see Table \ref{table:metrics}) against the level set
model compared to the uncorrelated model, which already a smaller variance
than the level set model. Adding correlation to the PD and RD model, however,
increases accuracy, with the most drastic improvement occurring at $t =2.384\times
10^{-5}$.          

\subsection{Coarsening}

In Figure \ref{fig:fitting}, average grain area versus time is plotted for
the level set and particle system model. For all models, coarsening rates
appear to be linear, with almost no transition period from adjusting to initial
conditions.  The PD and RD models have similar coarsening weights, while
the ND model coarsens significantly at a slower rate.  Adding correlated
weights has the effect of slowing coarsening, with  all particle models having
slower rates than the level set model. 

\subsection{Relative area distributions}
 
Figures \ref{fig:totarea}-\ref{fig:7areacorr} provide snapshots of relative
area densities for remaining grains  at time $t =
5.364
\times 10^{-6}$.
Relative area distributions were also considered in \cite{barmak2006remarks},
but we also include relative areas of grains with 5, 6, and 7 sides. The
Kolmogorov-Smirnov (KS) distances comparing particle models against the level
set model are given in Table \ref{table:metrics}.  For the level set method,
area densities for 5, 6, and 7-sided grains have modes at positive values
and tend to zero as areas approach zero.  In contrast, densities for 5-sided
grains for all particle models appear to be strictly decreasing.  Positive
modes appear for 6-sided grains, but densities for particle systems do not
tend to zero as grain area approaches zero.   Despite the similar shape of
the level set and the ND model under both correlated and uncorrelated weights,
 the KS distance for 7-sided grains is   larger than those of 5 and 6-sided
grains.  This is likely due to the sensitivity of the KS distance for distributions
which are concentrated at a single value. We also note that expect for 5-sided
 grains under the PD and RD models, adding correlations increases the KS
distance.

\section{Conclusions}   
\label{sec:conclusions}
 In this paper, we have developed a framework to  study the coarsening of
two dimensional isotropic grain boundary networks.  The framework combines
PDMP based particle systems, their kinetic limits, and a set of mutation
rules based on topological restrictions. We show in  Appendix \ref{sec:wp}
that the limiting  kinetic equations for  particle densities are well-posed
in an appropriate Banach space. The  parameters of the particle  system can
be varied to produce several models that describe grain boundary coarsening.
All such models rely on mutation matrices $R_{kj}^{(l)}$ which respect the
topological changes during a grain or edge deletion.  The remaining parameters,
  the selection weights $w_j^{(l)}$ and rates $\beta(t)$ of interior events,
may be  chosen to satisfy  other modeling assumptions related to first-neighbor
correlations and edge deletion frequencies. 

The results obtained from considering six separate particle models reveal
that several types of qualitative behavior are dependent on the rate of edge
deletion and first-neighbor correlations.  As expected, increased edge deletion
rates appear to increase the variance of the empirical distributions of one-point
statistics of grain topologies.  For the PD model, this smoothing is continuous
in time, whereas in the RD and ND models, stabilization occurs almost immediately.
 With uncorrelated weights, the PD model remains  similar to the level set
model for some time, but eventually diverges as topological frequencies continue
to smooth out.  The addition of correlated weights appears to retard the
variance of topologies for all models, and as a result, topological frequencies
for the PD model perform substantially better when compared against the level
set model. However, the improvement in statistics is not uniform across all
of the metrics we consider. The uncorrelated PD and RD models, for instance,
coarsen at a rate that is slower than the level set model. The addition of
weights  causes coarsening to slow, which in effect further decreases the
accuracy of coarsening rates for the PD and RD models. Thus, it would be
precipitate at this time to claim a specific particle model is superior.

To conclude, we find that the  set of particle models offered in this paper
are useful in several ways.  Perhaps most importantly, they contain  advantages
 of both the  kinetic models developed in the past three decades and the
 accurate but computationally expensive direct simulation of grain networks.
In particular, our model allows for rigorous examination of limiting densities.
On the other hand, simulation of our model is relatively easy to implement
compared to level-set methods. Implementation requires no discretization
of differential equations, since advection of species occurs at  constant
rates, and mutation times are effectively handled with  Poisson processes.

\begin{figure}
\centering
\subfloat{\includegraphics[width=.45\linewidth]{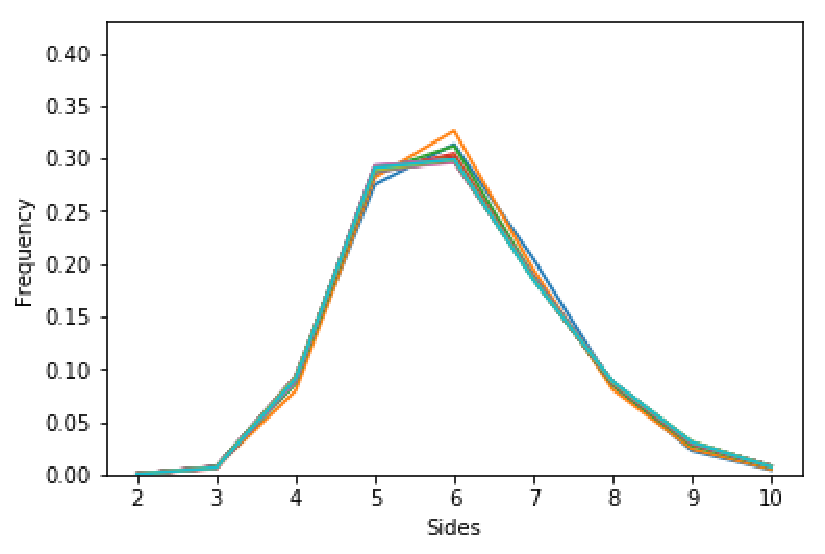}}
\subfloat{\includegraphics[width=.45\linewidth]{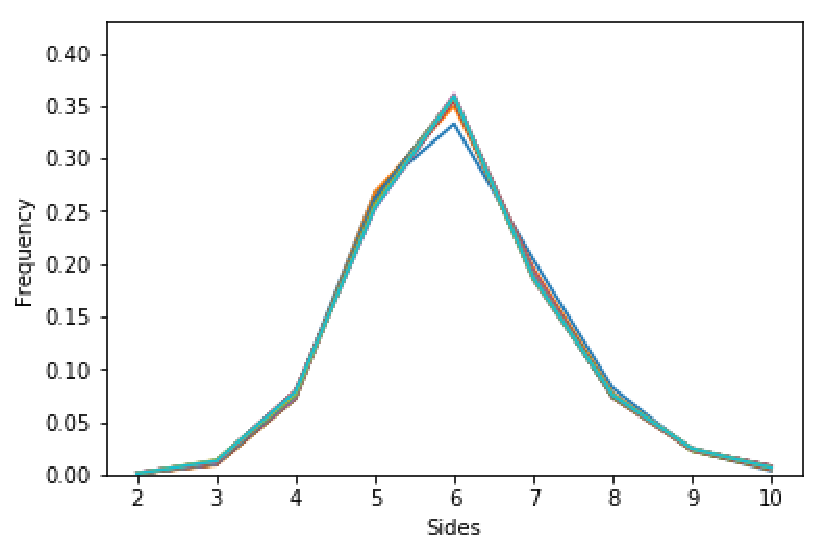}}

\subfloat{\includegraphics[width=.45\linewidth]{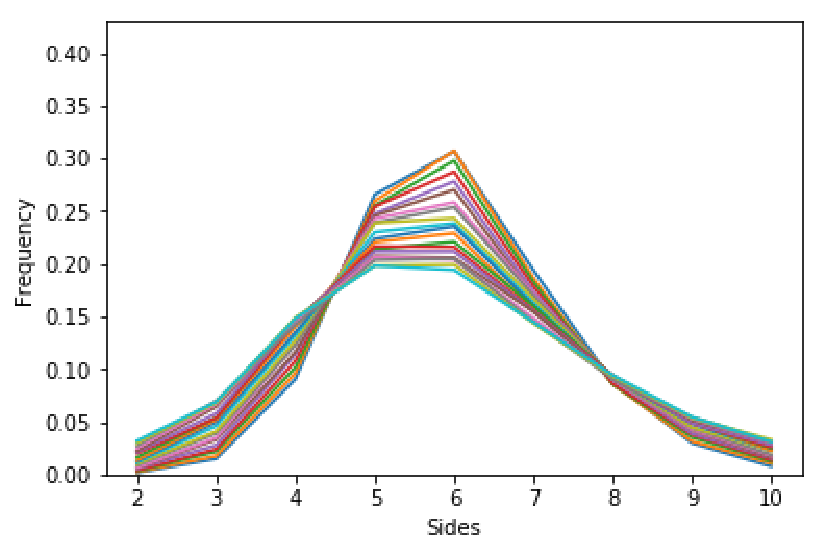}}
\subfloat{\includegraphics[width=.45\linewidth]{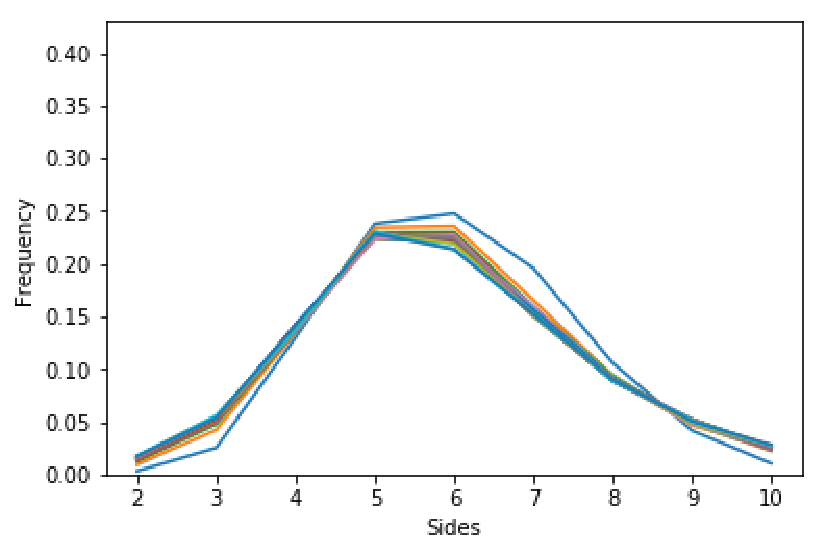}}
\caption{\textbf{The probability that a  grain has $s$ sides.} In each
figure, graphs
correspond to topological frequencies of grains at times $ 10k\cdot \Delta
t$, with
$\Delta t = 1.192\times 10^{-7}$, and $k = 1, \dots, 20.$ Connecting lines
between sides serve as a visual aid. Top left: Level set model. Top right:
ND model. Bottom left: PD model. Bottom right: RD model.}\label{fig:tops}
\end{figure}

\begin{figure}
\centering
\subfloat{\includegraphics[width=.45\linewidth]{elseytops.eps}}
\subfloat{\includegraphics[width=.45\linewidth]{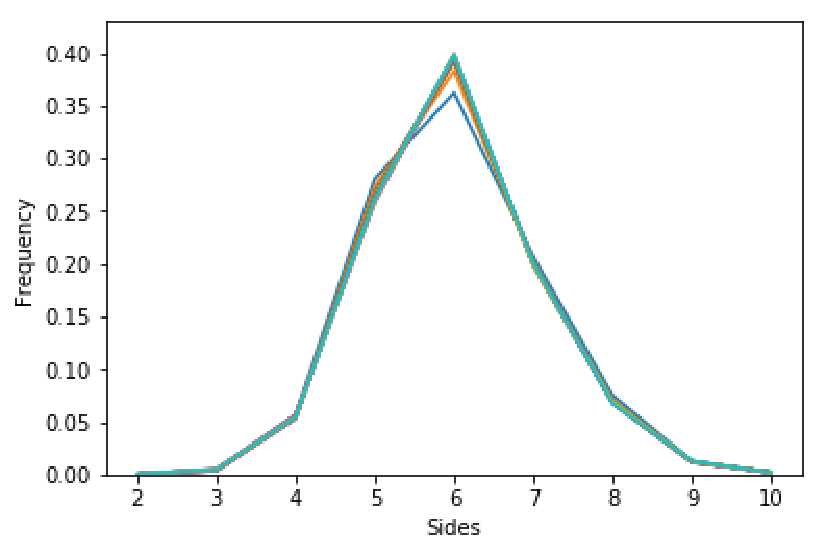}}

\subfloat{\includegraphics[width=.45\linewidth]{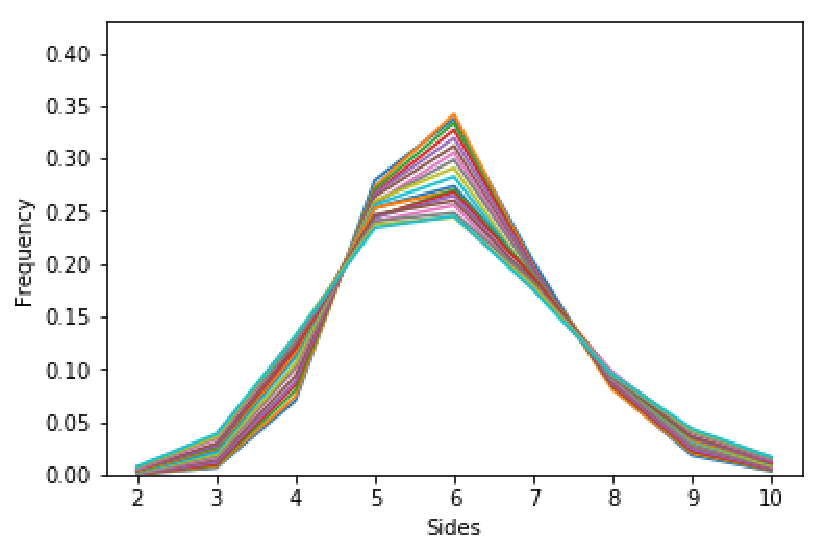}}
\subfloat{\includegraphics[width=.45\linewidth]{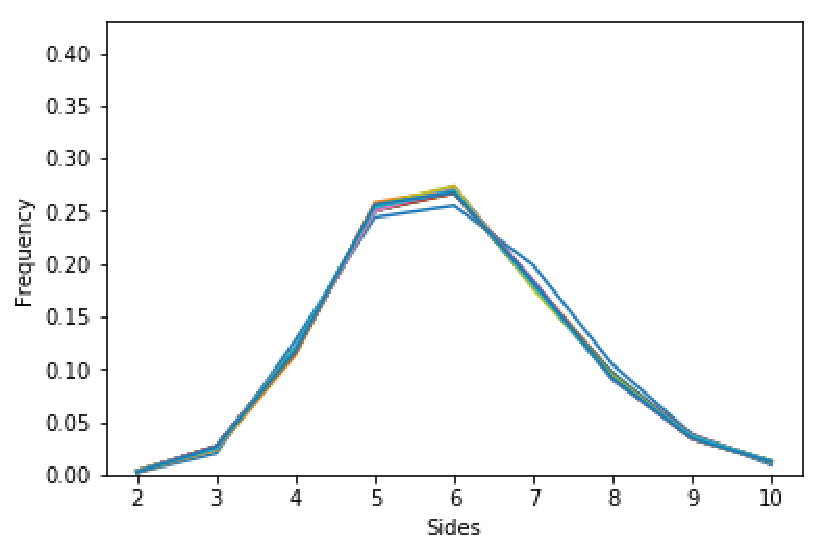}}
\caption{\textbf{The probability that a  grain has $s$ sides when the weights
are correlated.}
In each figure, graphs
correspond to topological frequencies of grains at times $ 10k\cdot \Delta
t$, with
$\Delta t = 1.192\times 10^{-7}$, and $k = 1, \dots, 20.$ Connecting lines
between sides serve as a visual aid. Top left: Level set model. Top right:
ND model. Bottom left: PD model. Bottom right: RD model.}\label{fig:corrtops}
\end{figure}

\begin{figure}
\begin{center}\subfloat{\includegraphics[width=.5\linewidth]{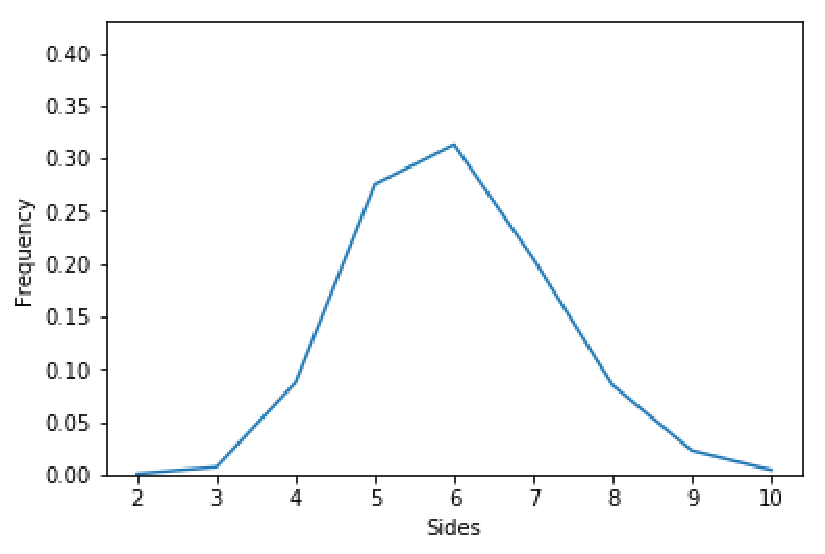}}\end{center}

\subfloat{\includegraphics[width=.49\linewidth]{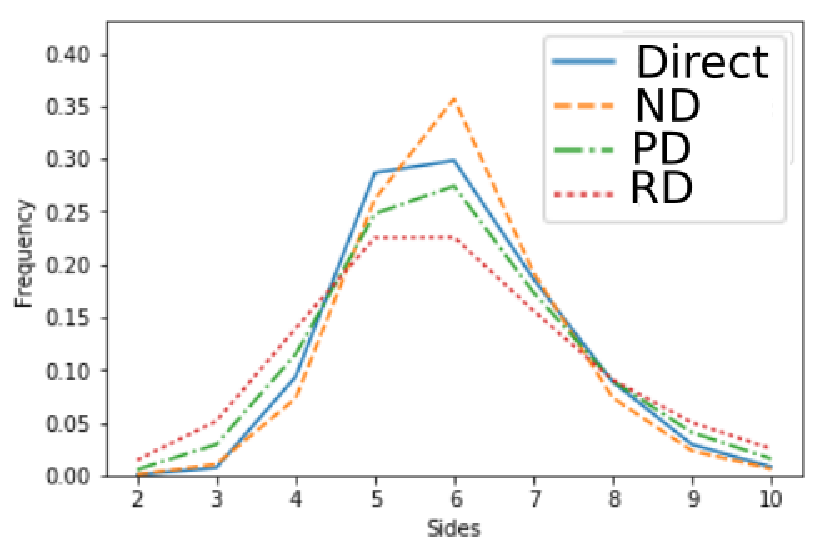}}
\subfloat{\includegraphics[width=.49\linewidth]{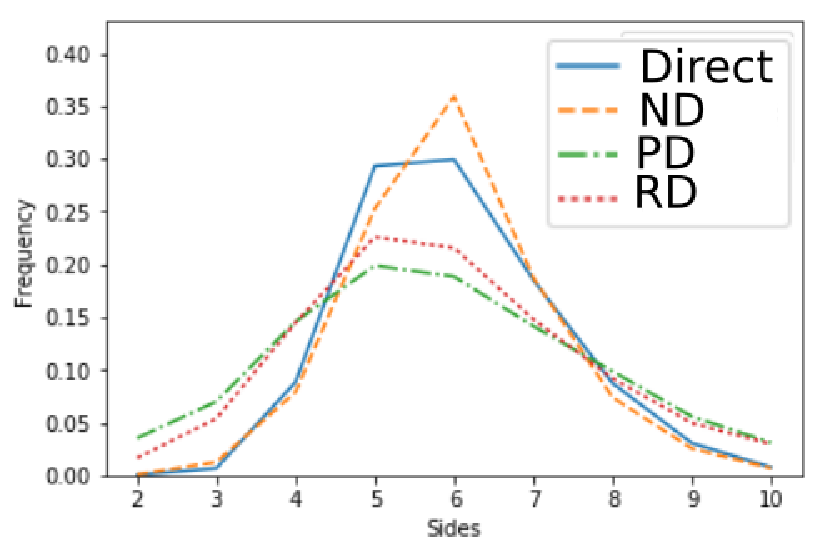}}

\subfloat{\includegraphics[width=.49\linewidth]{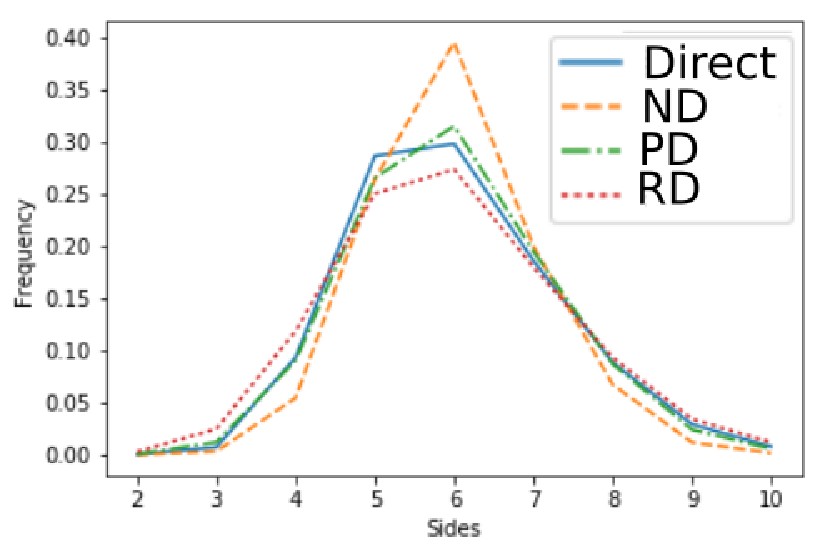}}
\subfloat{\includegraphics[width=.49\linewidth]{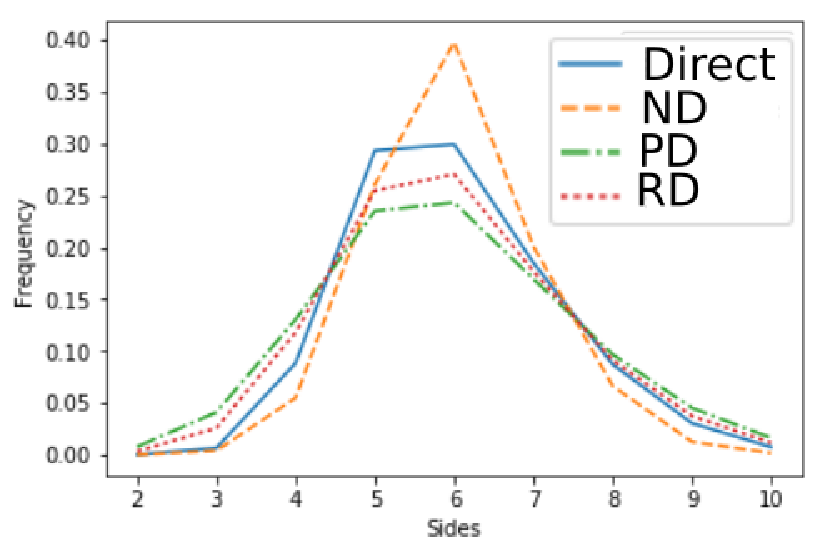}}
\caption{\textbf{Snapshots of grain topologies.} Empirical distributions
as in Figures \ref{fig:tops} and  \ref{fig:corrtops}. Connecting lines are
for visual
aid. Top:\ Initial conditions for all models.  Second row: Snapshots for
models with uncorrelated weights at times $t = 5.364 \times 10^{-6}$ (left)
and $t = 2.384\times 10^{-5}$ (right). Third row:
Snapshots for models with correlated weights at times $t = 5.364 \times 10^{-6}$
(left)
and $t = 2.384\times 10^{-5}$ (right).} \label{fig:topstimes}
\end{figure}

\begin{figure}
\centering
\subfloat{\includegraphics[width=.49\linewidth]{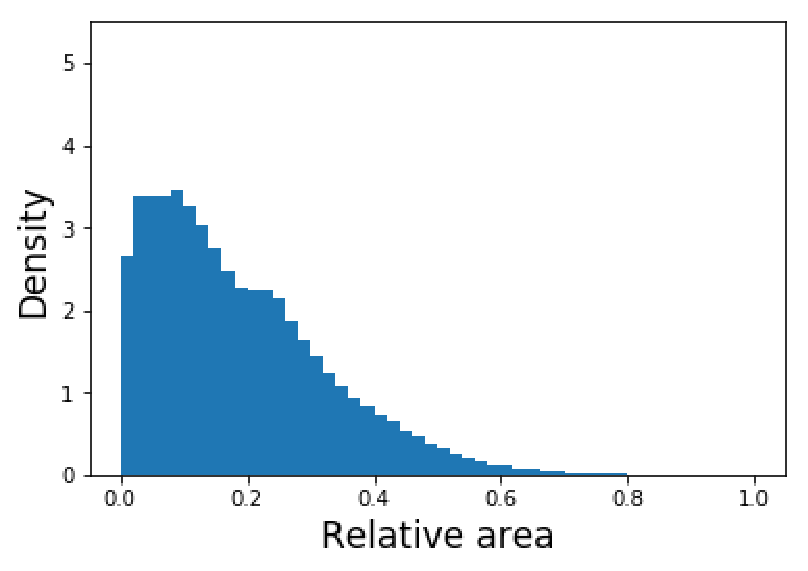}}
\subfloat{\includegraphics[width=.49\linewidth]{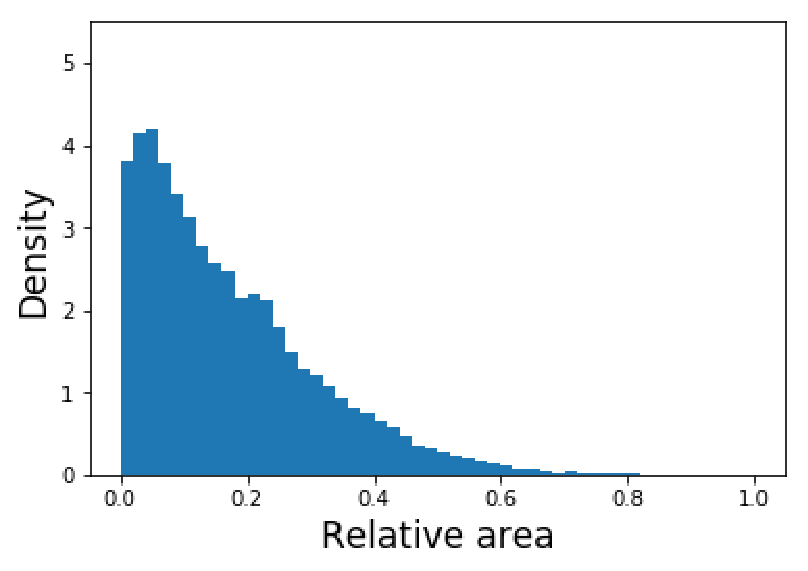}}

\subfloat{\includegraphics[width=.49\linewidth]{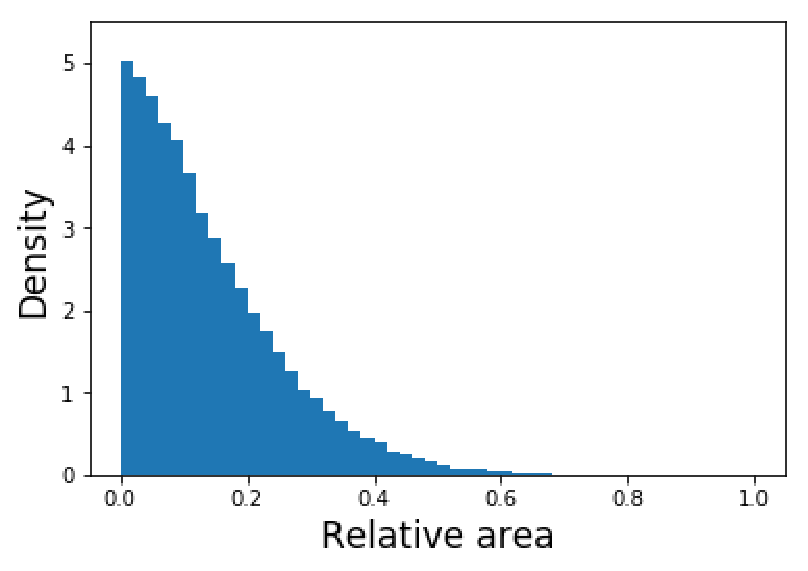}}
\subfloat{\includegraphics[width=.49\linewidth]{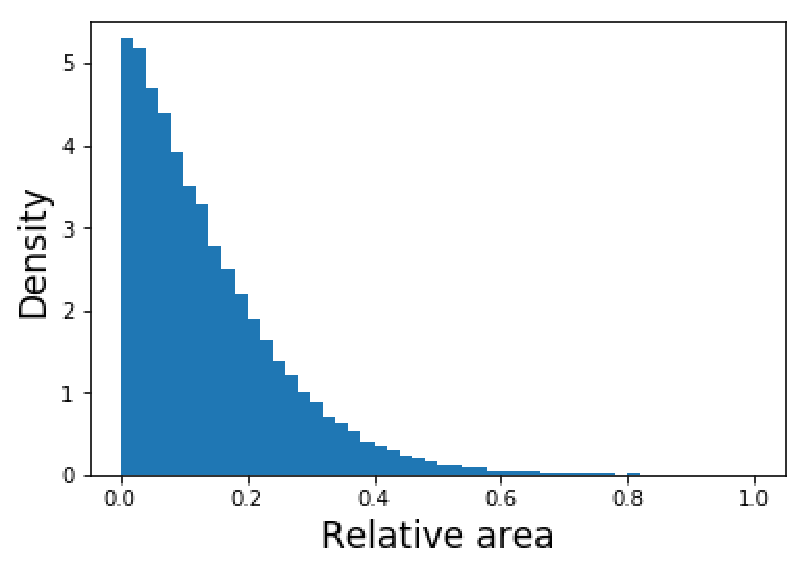}}
\caption{\textbf{Relative area densities for models with uncorrelated weights
of grains at $t = 5.364\times 10^{-6}$. 
}Top left: Level set model. Top right:
ND model. Bottom left: PD model. Bottom right: RD model.}\label{fig:totarea}
\end{figure}

\begin{figure}
\centering
\subfloat{\includegraphics[width=.49\linewidth]{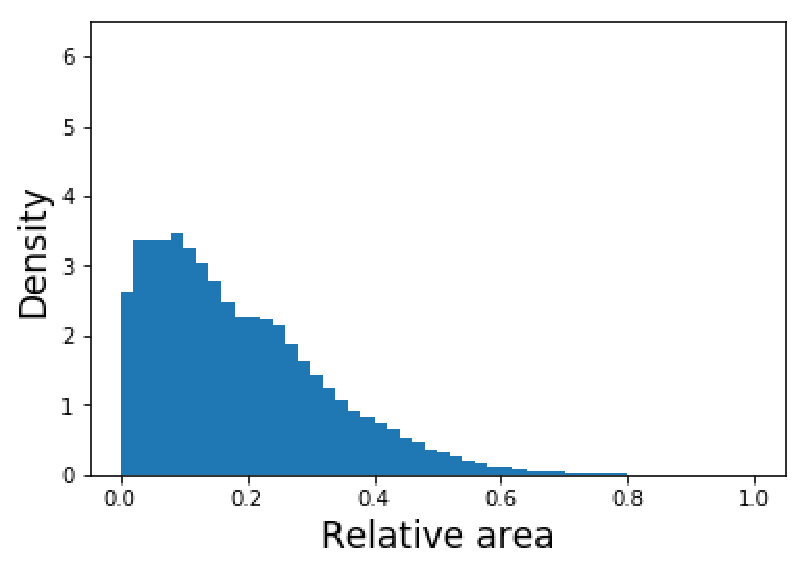}}
\subfloat{\includegraphics[width=.49\linewidth]{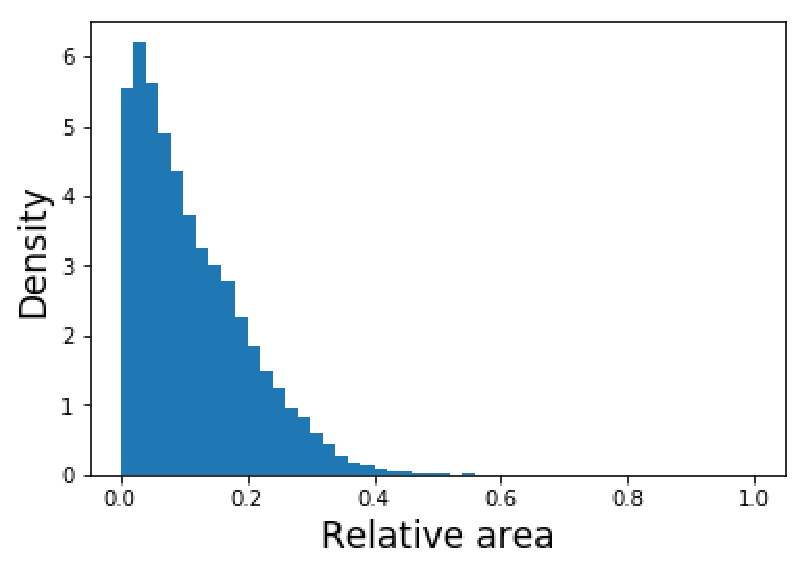}}

\subfloat{\includegraphics[width=.49\linewidth]{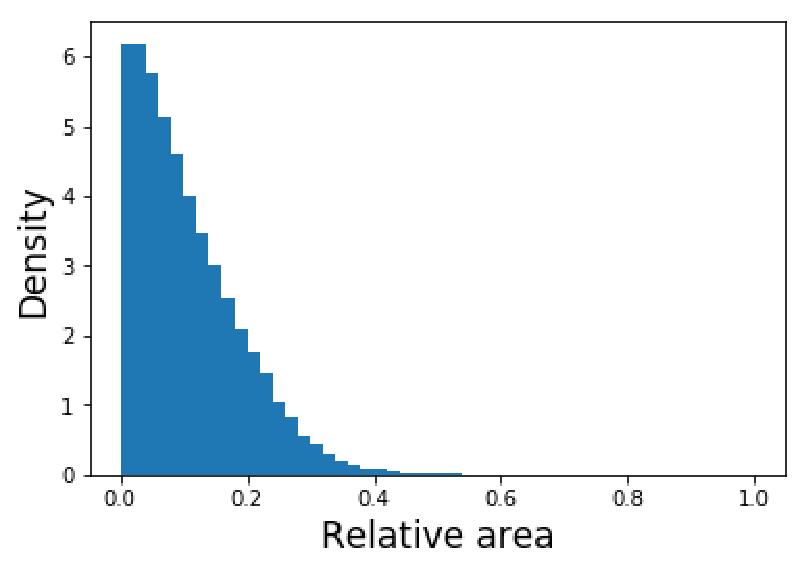}}
\subfloat{\includegraphics[width=.49\linewidth]{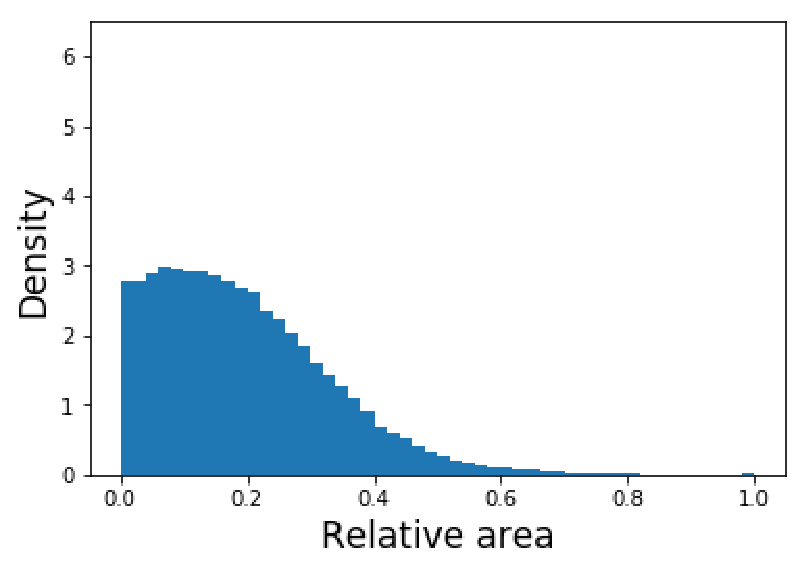}}
\caption{\textbf{Relative area densities for models with correlated weights
of grains at $t = 5.364\times 10^{-6}$.
}Top left: Level set model. Top right:
ND model. Bottom left: PD model. Bottom right: RD model.}\label{fig:totareacorr}
\end{figure}

\begin{figure}
\centering
\subfloat{\includegraphics[width=.49\linewidth]{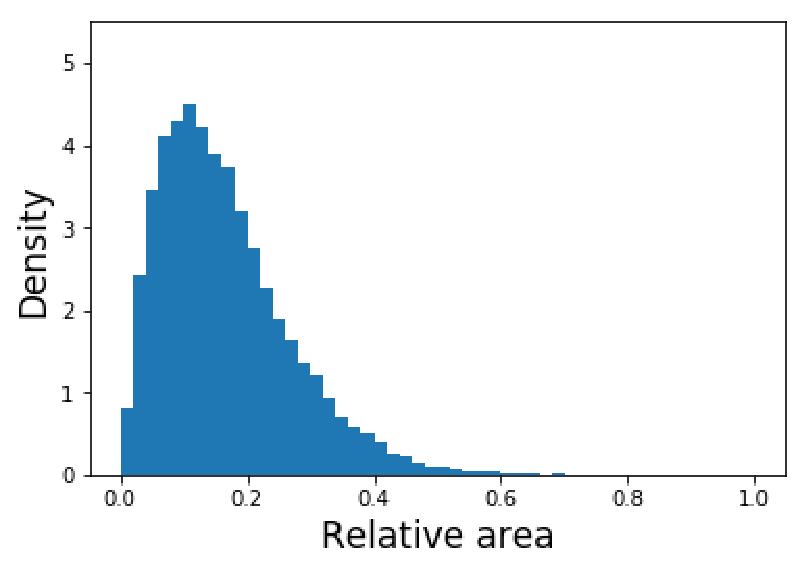}}
\subfloat{\includegraphics[width=.49\linewidth]{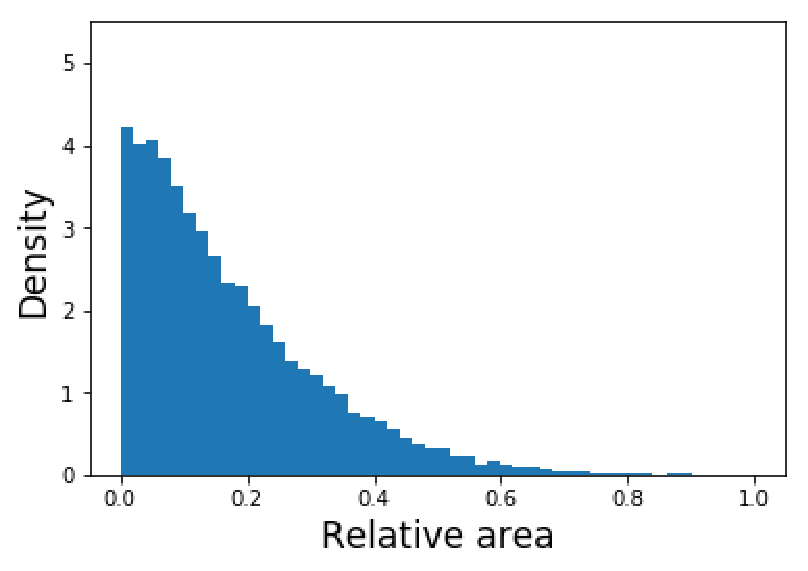}}

\subfloat{\includegraphics[width=.49\linewidth]{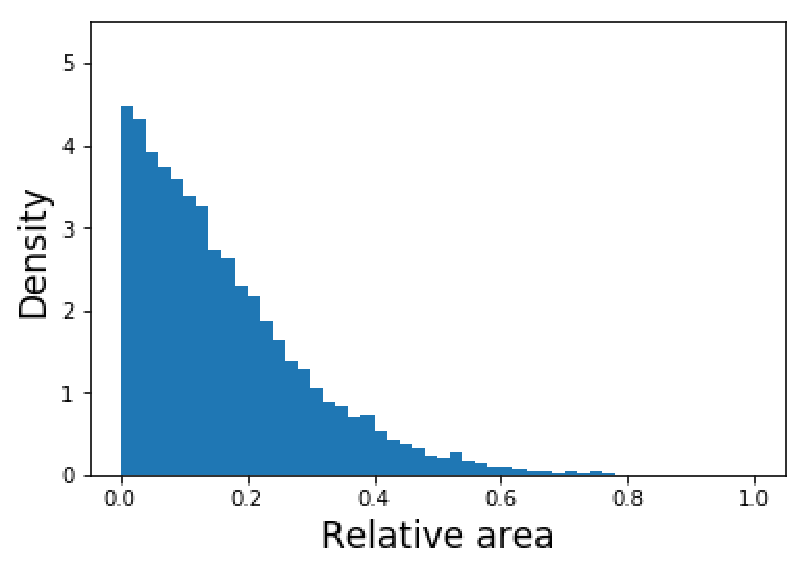}}
\subfloat{\includegraphics[width=.49\linewidth]{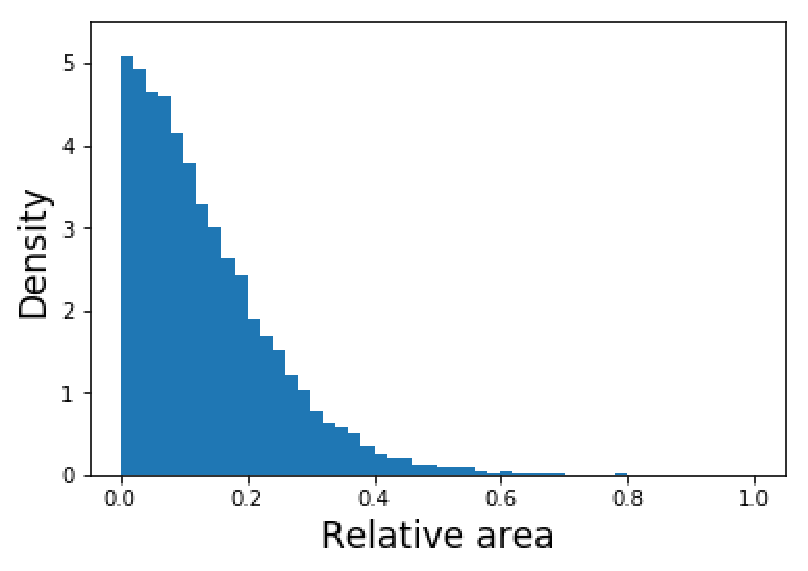}}
\caption{\textbf{Relative area densities for models with uncorrelated weights
of 5-sided
 grains at $t = 5.364\times 10^{-6}$.
}Top left: Level set model. Top right:
ND model. Bottom left: PD model. Bottom right: RD model.}\label{fig:5area}
\end{figure}

\begin{figure}
\centering
\subfloat{\includegraphics[width=.49\linewidth]{elsey5.eps}}
\subfloat{\includegraphics[width=.49\linewidth]{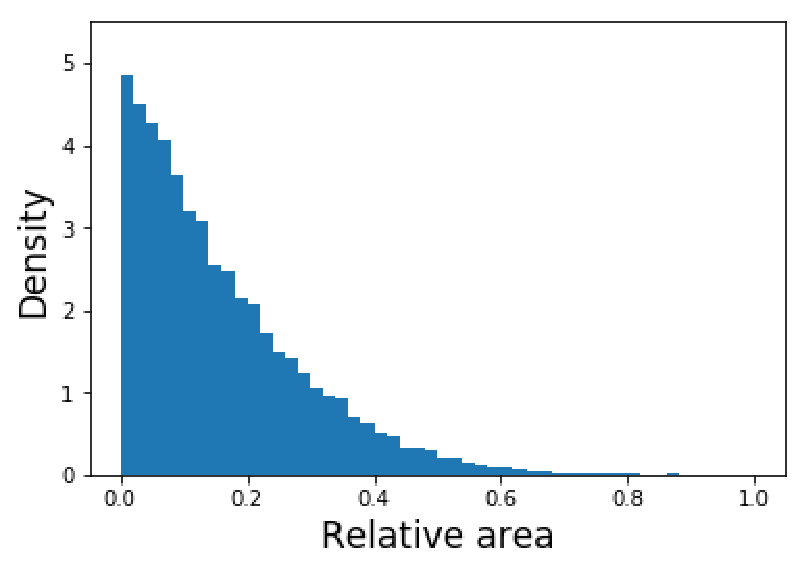}}

\subfloat{\includegraphics[width=.49\linewidth]{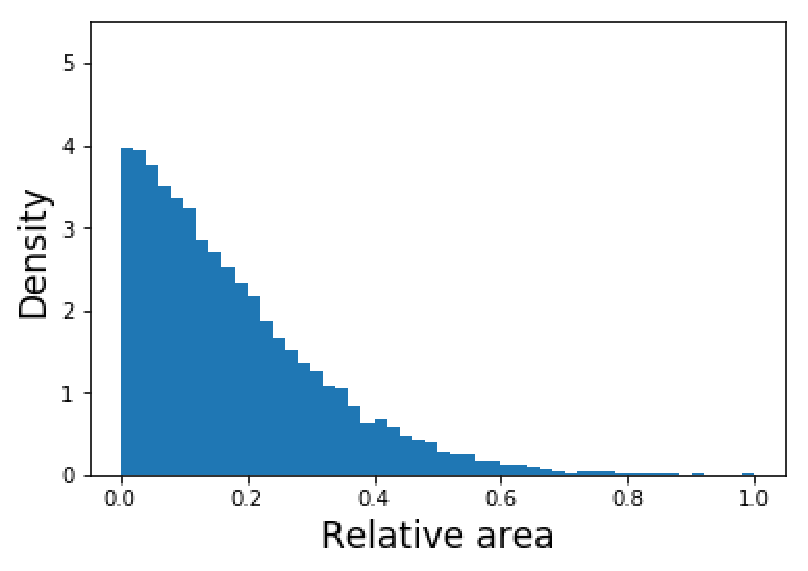}}
\subfloat{\includegraphics[width=.49\linewidth]{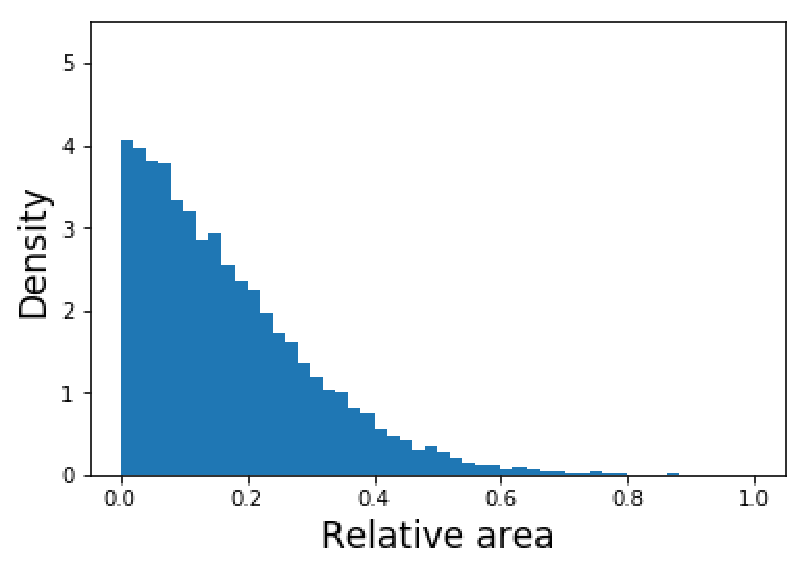}}
\caption{\textbf{Relative area densities for models with correlated weights
of 5-sided
grains at $t = 5.364\times 10^{-6}$.} Top left: Level set model. Top right:
ND model. Bottom left: PD model. Bottom right: RD model.}\label{fig:5areacorr}
\end{figure}

\begin{figure}
\centering
\subfloat{\includegraphics[width=.49\linewidth]{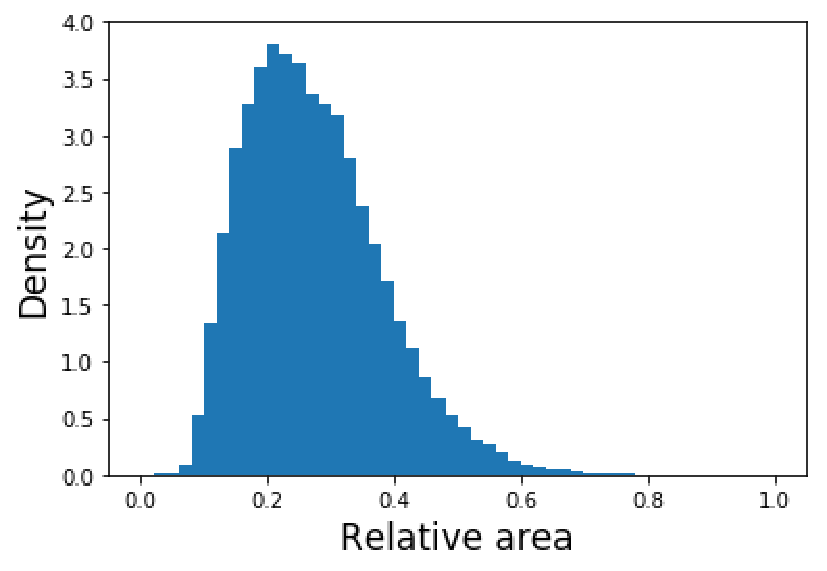}}
\subfloat{\includegraphics[width=.49\linewidth]{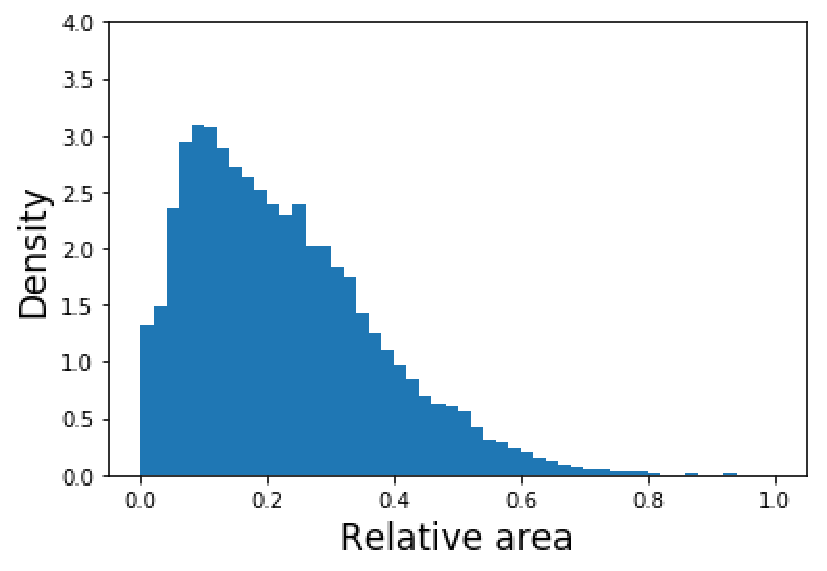}}

\subfloat{\includegraphics[width=.49\linewidth]{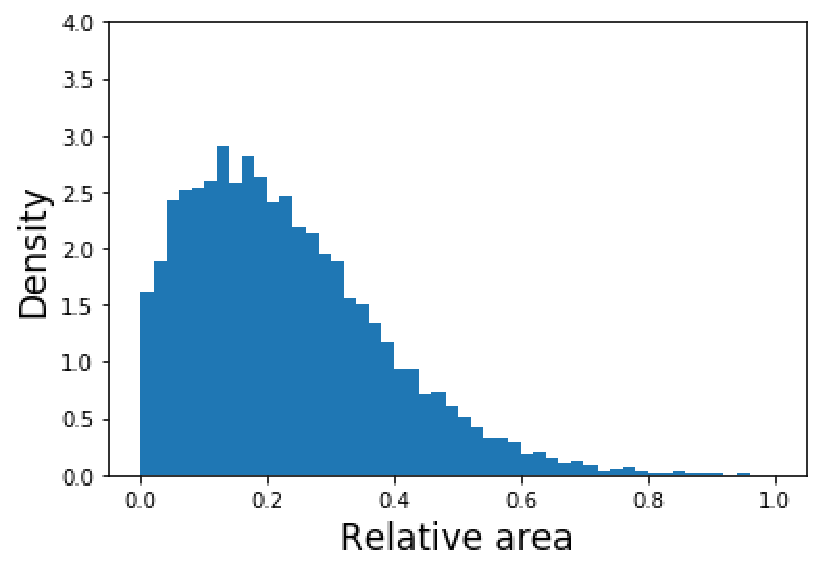}}
\subfloat{\includegraphics[width=.49\linewidth]{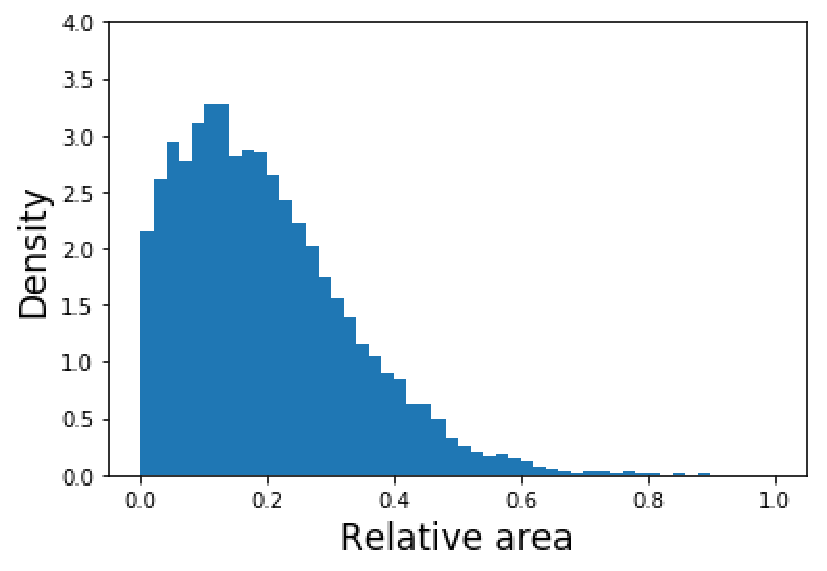}}
\caption{\textbf{Relative area densities for models with uncorrelated weights
of 6-sided
grains at $t = 5.364\times 10^{-6}$.
} Top left: Level set model. Top right:
ND model. Bottom left: PD model. Bottom right: RD model.}\label{fig:6area}
\end{figure}

\begin{figure}
\centering
\subfloat{\includegraphics[width=.49\linewidth]{elsey6.eps}}
\subfloat{\includegraphics[width=.49\linewidth]{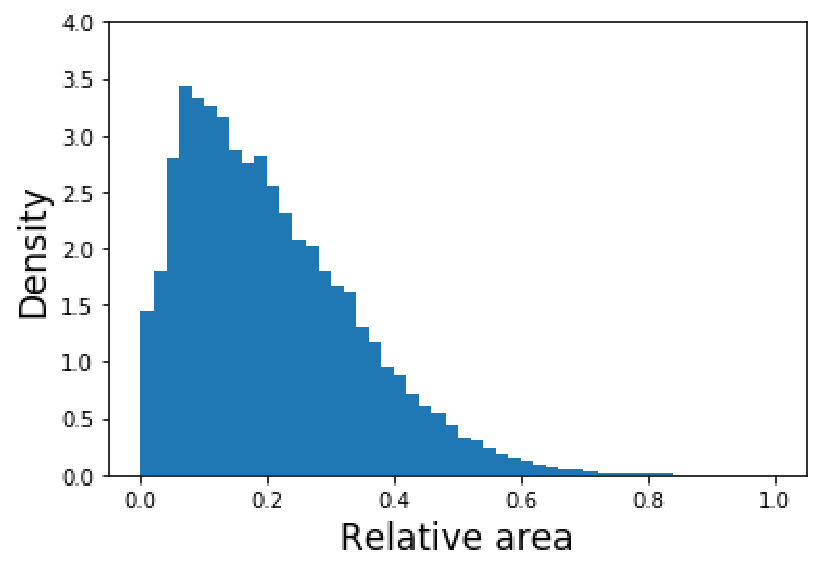}}

\subfloat{\includegraphics[width=.49\linewidth]{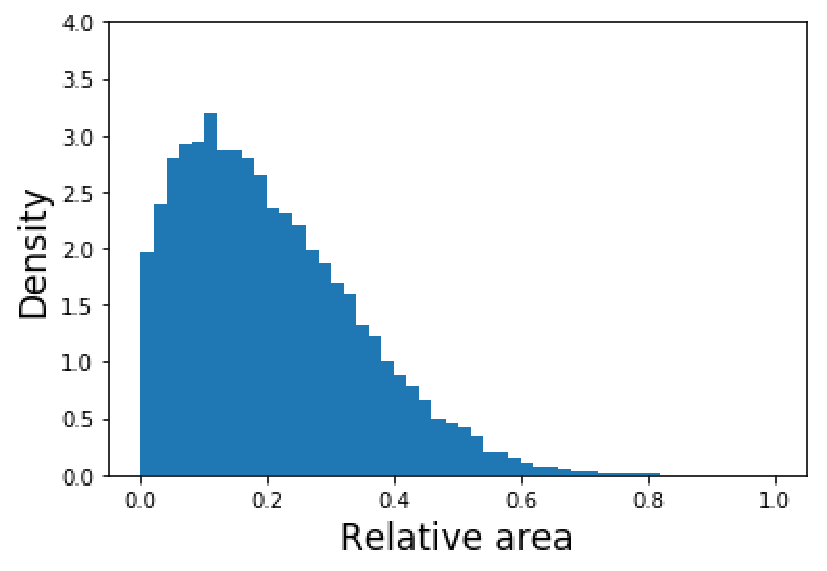}}
\subfloat{\includegraphics[width=.49\linewidth]{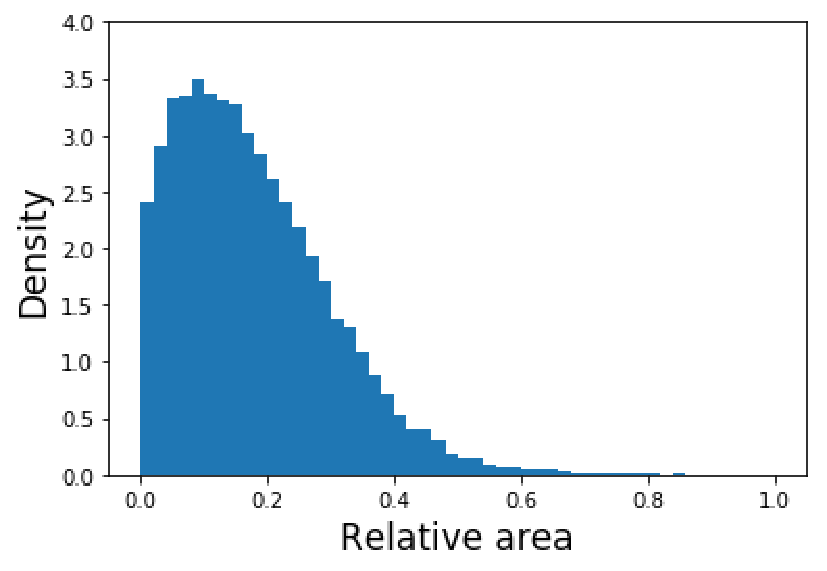}}
\caption{\textbf{Relative area densities for models with correlated weights
of 6-sided
grains at $t = 5.364\times 10^{-6}$.} Top left: Level set model. Top right:
ND model. Bottom left: PD model. Bottom right: RD model.}\label{fig:6areacorr}
\end{figure}

\begin{figure}
\centering
\subfloat{\includegraphics[width=.49\linewidth]{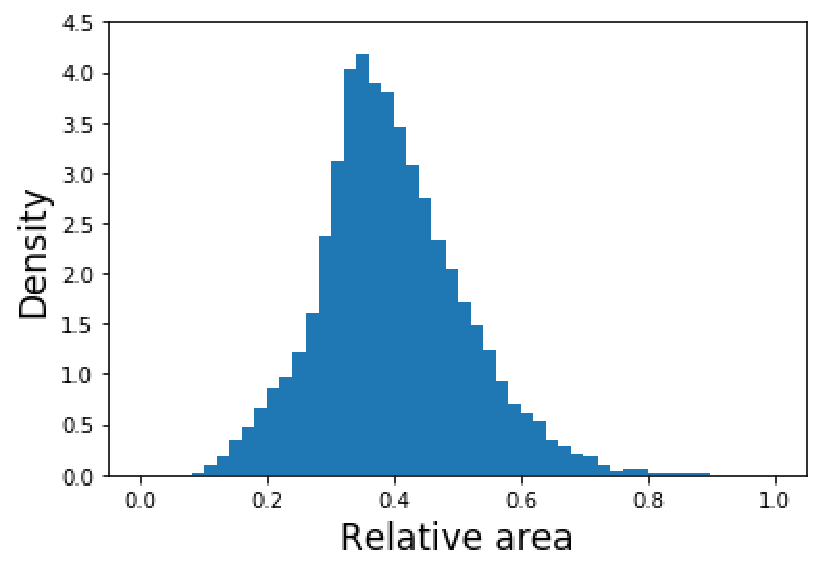}}
\subfloat{\includegraphics[width=.49\linewidth]{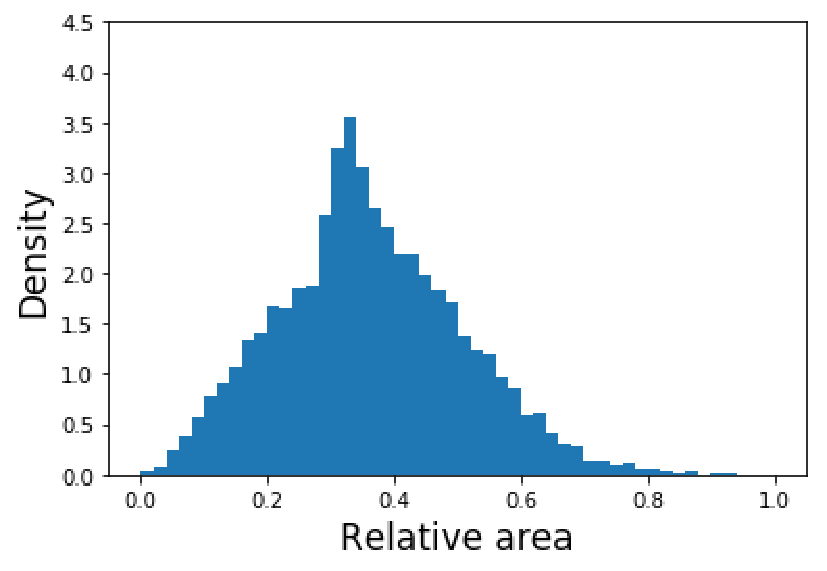}}

\subfloat{\includegraphics[width=.49\linewidth]{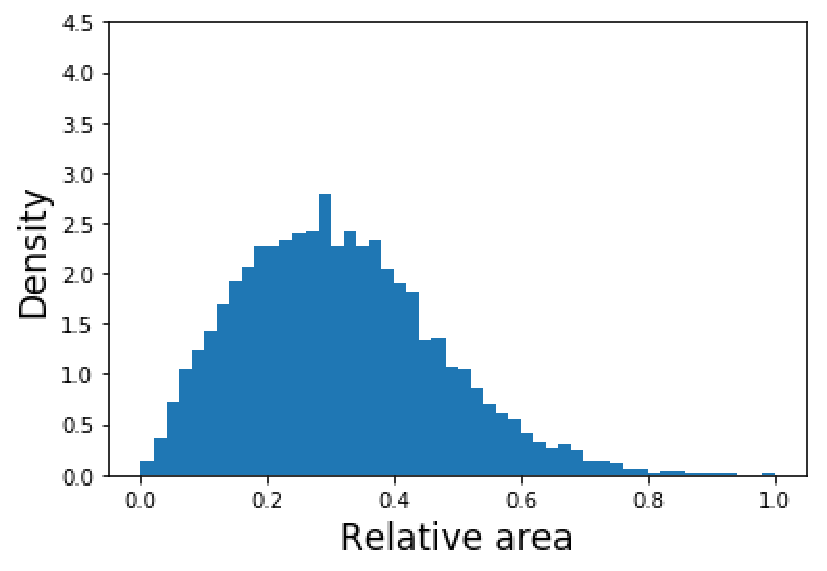}}
\subfloat{\includegraphics[width=.49\linewidth]{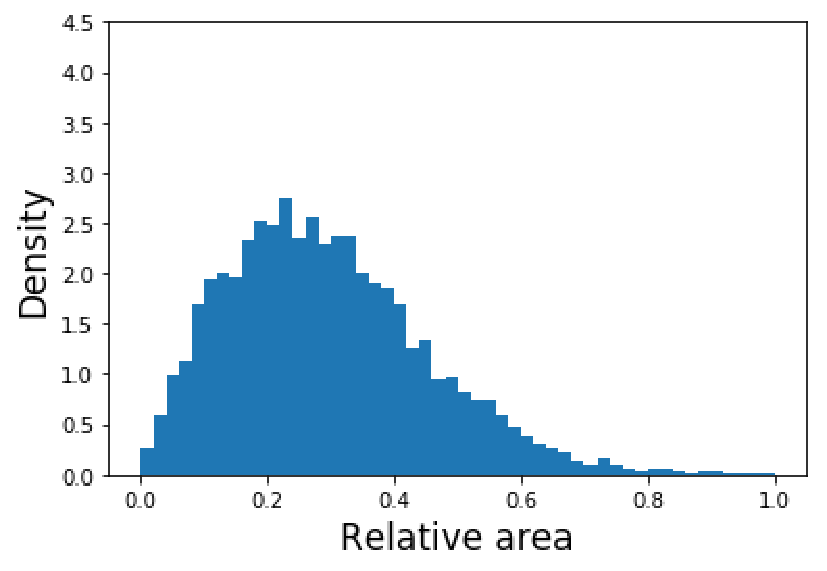}}
\caption{\textbf{Relative area densities for models with uncorrelated weights
of 7-sided
grains at $t = 5.364\times 10^{-6}$.
} Top left: Level set model. Top right:
ND model. Bottom left: PD model. Bottom right: RD model.}\label{fig:7area}
\end{figure}

\begin{figure}
\centering
\subfloat{\includegraphics[width=.49\linewidth]{elsey7.eps}}
\subfloat{\includegraphics[width=.49\linewidth]{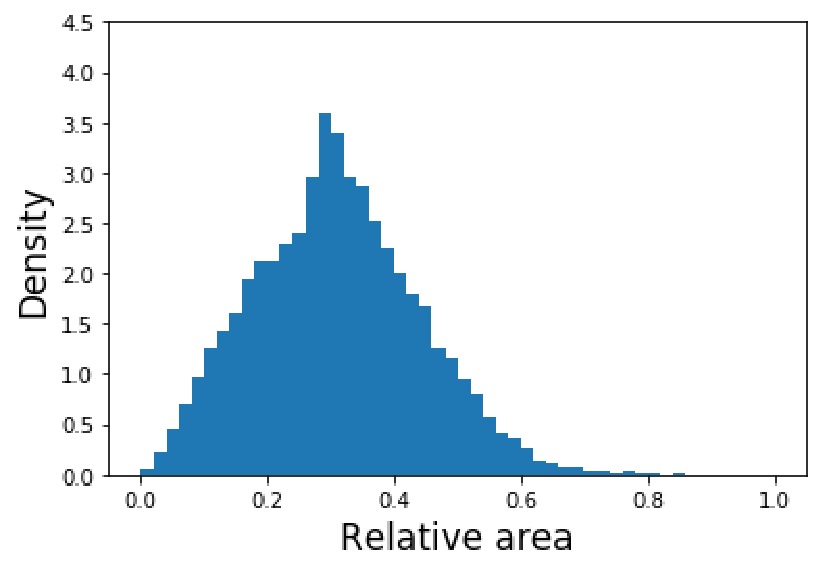}}

\subfloat{\includegraphics[width=.49\linewidth]{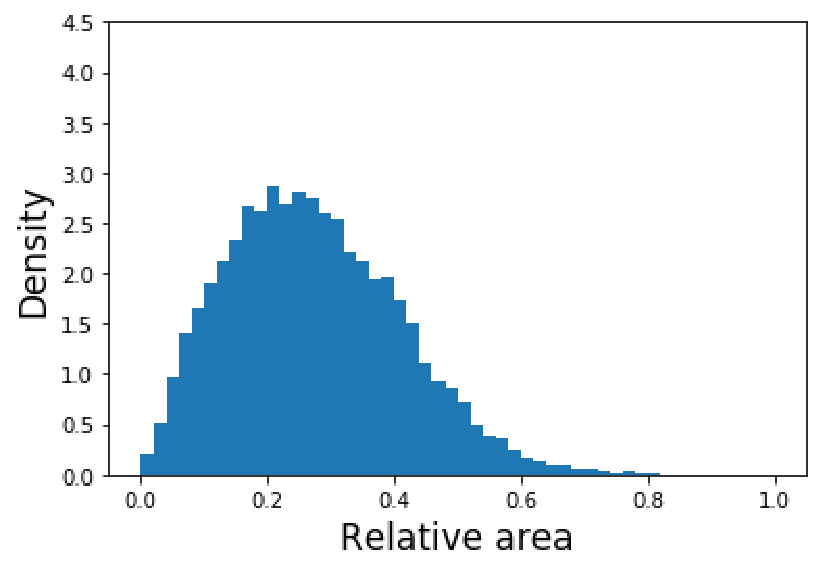}}
\subfloat{\includegraphics[width=.49\linewidth]{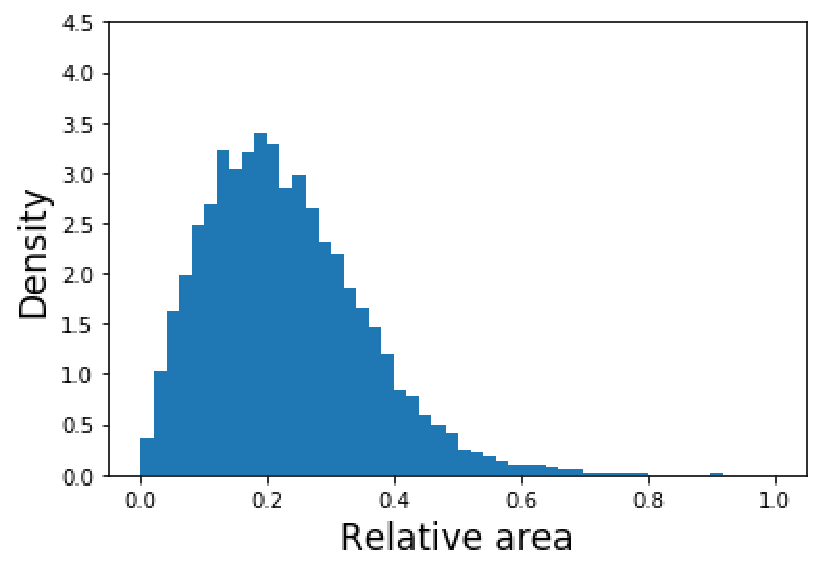}}
\caption{\textbf{Relative area densities for models with correlated weights
of 7-sided
grains at $t = 5.364\times 10^{-6}$.} Top left: Level set model. Top right:
ND model. Bottom left: PD model. Bottom right: RD model.}\label{fig:7areacorr}
\end{figure}

\begin{figure}
\centering
\subfloat{\includegraphics[width=.49\linewidth]{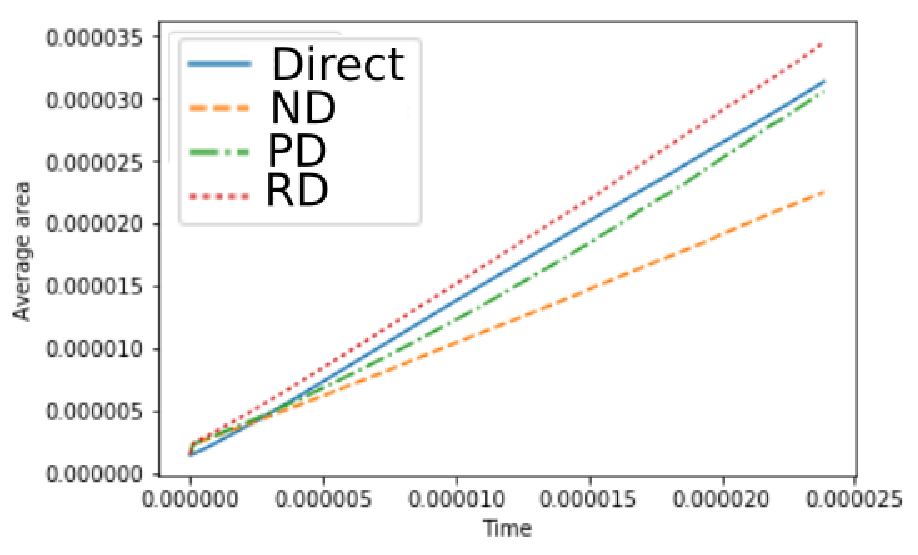}}
\subfloat{\includegraphics[width=.49\linewidth]{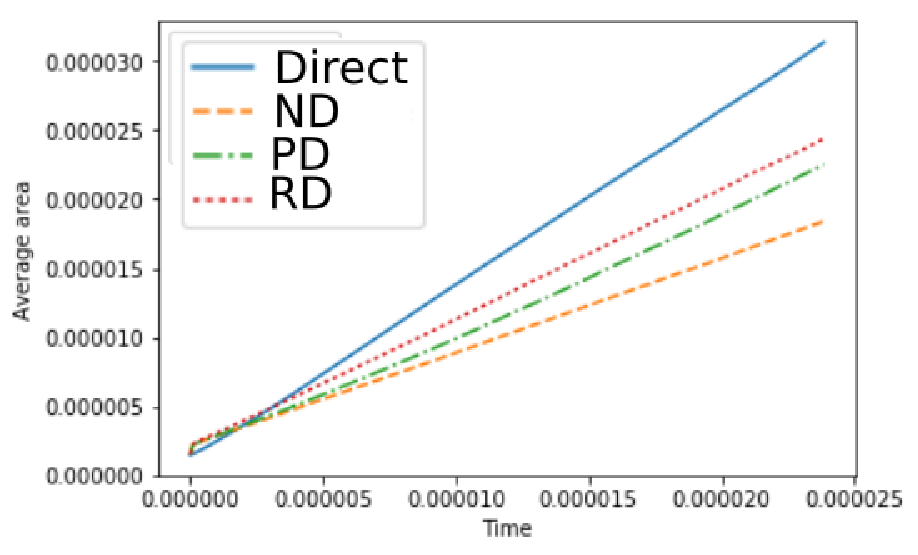}}
\caption{\textbf{Growth of average grain area.} Left: Uncorrelated weights.
Right: Correlated weights. } \label{fig:coarsening}
\end{figure}

\begin{table}
\centering
\begin{tabular}{|c||c|c|c|}\hline
\multicolumn{4}{ |c| }{\textbf{Uncorrelated Weights}}  \\\hline
 & ND & PD & RD  \\\hline
KS distance: 5 sides & .113 & .120 & .170 \\\hline
KS distance:\ 6 sides & .262 & .247 & .322 \\\hline
KS distance: 7 sides & .159 & .327 & .388  \\\hline
KS distance: all grains & .064 & .147 & .156  \\\hline
Total Variation ($t =5.364\times 10^{-6}$)  & .068 & .072 & .154  \\\hline
Total Variation ($t =2.384\times 10^{-5}$)  & .069 & .232 & .177\\\hline
\multicolumn{4}{ |c| }{\textbf{Correlated Weights}}  \\\hline
 & ND & PD & RD  \\\hline
KS distance: 5 sides & .141 & .100 & .104 \\\hline
KS distance:\ 6 sides & .304 & .301 & .369 \\\hline
KS distance: 7 sides & .316 & .430 & .571  \\\hline
KS distance: all grains & .235 & .262 & .226  \\\hline
Total Variation ($t =5.364\times 10^{-6}$)  & .111 & .032 & .062  \\\hline
Total Variation ($t =2.384\times 10^{-5}$)  & .114 & .124 & .072\\\hline
\end{tabular}
\caption{\textbf{Comparison of particle models to level set model}. Rows
with the
Kolmogorov-Smirnov metric measure distances between 5, 6, and 7 sided grain
area distributions
of particles systems and the direct numerical  simulation. Rows with the
total variation
metric measure distances between statistical topologies of particle systems
and the
direct numerical simulation.  }\label{table:metrics}
\end{table}

\section{Acknowledgements}
This work was supported by NSF grants  DMS 1714187 (JK), DMS 1344962 (GM),
and DMS 1515400 and DMS 1812609 (RLP).
GM acknowledges partial support from the Simons Foundation and the Charles
Simonyi Fund at IAS. 
RLP was partially supported by the Simons Foundation  
and by the Center for Nonlinear Analysis under NSF PIRE Grant no.\ OISE-0967140.

\section{Conflicts of Interest}
The authors declare that they have no conflict of interest.

\appendix
\section{Description of the $M$-species system as a PDMP} 
\label{app:pdmpexplain}                 
We briefly review the basics of PDMPs, following Davis~\cite{Davis}, and
then explain how the $M$-species stochastic particle process of Section~\ref{subsec:model}
fits into this framework. 

\subsection{Background: General theory of PDMPs} 
\label{subsec:bg}

We consider a countable set  $\mathcal{S}$ with elements denoted $\mathbf{s}$, a map $\mathbf{d}:
\mathcal{S} \rightarrow \mathbb{N}$, and open sets for each $\mathbf{s}$ of the form $M_{\mathbf{s}}
\subset
\mathbb{R}^{\mathbf{d}(\mathbf{s})}$. The state space is the disjoint union
\begin{equation} E =\coprod_{\mathbf{s} \in \mathcal{S}} M_{\mathbf{s}} = \left\{(\mathbf{s},\mathbf{x}):\mathbf{s}
\in \mathcal{S}, \mathbf{x} \in M_{\mathbf{s}} \right\}. \end{equation} 
The space $E$ has a natural topology.   Let
$\iota_\mathbf{s}:M_{\mathbf{s}} \rightarrow E$ be the canonical injection defined by
$\iota_\mathbf{s} (\mathbf{x}) = (\mathbf{s},\mathbf{x})$. A set $A \subset E$ is open if
for every $\mathbf{s}$, $\iota_\mathbf{s}^{-1}(A)$ is open in $M_{\mathbf{s}}$. The collection of
all open sets may be used to define the set $\mathcal E$ of Borel subsets
of $E$. This makes $(E,\mathcal E)$ a Borel space.  

A PDMP is an $E$-valued generalized jump process $X(t) = (\mathbf{s}(t),\mathbf{x}(t))$,
$t\geq 0$, that is prescribed by:  
\begin{enumerate} 
\item Sufficiently smooth vector fields $\mathbf{v}_{\mathbf{s}}: M_{\mathbf{s}} \to \mathbb{R}^{\mathbf{d}(\mathbf{s})}$, $\mathbf{s} \in
\mathcal{S}$.
\item A measurable function $\lambda:E \rightarrow
\mathbb R^+$. 
\item A transition measure $Q: \mathcal E \times (E \cup \Gamma^*)
\rightarrow [0,1]$. Here $\Gamma^*$ denotes the exit boundary defined in
equations \ref{eq:exit1}--\ref{eq:exit2} below. 
\end{enumerate}
Points in $M_{\mathbf{s}}$ travel according to flows defined by the vector fields $\mathbf{v}_{\mathbf{s}}$
until either a Poisson clock with intensity $\lambda(\mathbf{s},\mathbf{x})$ rings or
the point $\mathbf{x}(t)$ hits the exit boundary $\Gamma^*$. When such a {\em critical
event\/} occurs the point $X(t)$ jumps to a random new position whose law
is given by $Q$.

Each vector field $\mathbf{v}_{\mathbf{s}}$ may be viewed as a first-order  differential  operator
on $M_{\mathbf{s}}$.
We assume they define a flow $\varphi_\mathbf{s}(t,\mathbf{x})$ such that
\begin{equation}
\label{eq:flow-def}      
\frac{\partial}{\partial t} h \left(\mathbf{s},\varphi_\mathbf{s} (t,\mathbf{x}) \right)
= \mathbf{v}_{\mathbf{s}}\left(h \left( \varphi_\mathbf{s} (t,\mathbf{x})\right)\right), \quad \varphi_\mathbf{s}
(0,\mathbf{x}) = \mathbf{x},  
\end{equation}  
for all sufficiently smooth test functions $h$ and for  $t$ in a maximal
interval of existence. The flow terminates only when $\mathbf{x}(t)$ hits 
\begin{equation} 
\label{eq:exit2}
\partial^*M_{\mathbf{s}} =\left \{\mathbf{y} \in \partial M_{\mathbf{s}}:
\varphi_\mathbf{s} (t^-, \mathbf{x}) = \mathbf{y} \quad \hbox{ for some } (t, \mathbf{x}) \in
\mathbb{R}_+ \times M_{\mathbf{s}}\right \}. \end{equation} 
The {\em exit boundary\/} is the disjoint collection 
\begin{equation} 
\label{eq:exit1}
\Gamma^* = \coprod_{\mathbf s \in \mathcal{S}} \partial^{*}M_{\mathbf{s}} = \left\{(\mathbf{s},\mathbf{x}):\mathbf
s \in \mathcal{S}, \mathbf{x} \in \partial^*M_{\mathbf{s}} \right\},
\end{equation}
At a given state $(\mathbf{s},\mathbf{x}) \in E$ we define the first exit time 
\begin{equation} 
\label{eq:pdmp-first-exit}      
        t_\mathbf{s}^*(\mathbf x) = \sup \{t >0:
\varphi_\mathbf{s} (t, \mathbf{x}) \in M_{\mathbf{s}}\},
\end{equation} 
and the survivor function
\begin{equation} 
\label{eq:pdmp-survivor}          
\mathcal{F}_{\left(\mathbf{s}, \mathbf{x}\right)}
(t) = \begin{cases}
\exp\left(-\int_0^t \lambda \left( \mathbf{s},\varphi_\mathbf{s} \left(\tau,\mathbf{x}\right)
 \right)  \,d\tau \right), &
t<t_\mathbf{s}^*(\mathbf{x}), \\ 0, & t \ge t^*(\mathbf{x}). \end{cases} 
\end{equation}

The stochastic process $(X(t))_{t \ge 0}$ with initial condition $X(0) =
(\mathbf{s}_0,\mathbf{x}_0)$ is defined as follows. Choose a random time $T_0$ such that
$\mathbb{P}[T_0>t] =  \mathcal{F}_{\left(\mathbf{s}, \mathbf{x}_0\right)}(t)$ and  an $E$-valued random variable $(\mathbf{s}_1,\mathbf{x}_1)$
with
law $Q(\cdot \,; \varphi_{\mathbf{s}_0}(T_0,\mathbf{x}_0))$ that is independent of $T_0$.
The trajectory of $X(t)$ for $t \le T_0$ is then 
\begin{equation} X(t) = \begin{cases} 
        \left(\mathbf{s}_0,\varphi_{\mathbf{s}_0}(t,\mathbf{x}_0)\right), &
t<T_0, \\ (\mathbf{s}_1,\mathbf{x}_1), & t = T_0. \end{cases} 
\end{equation} 
At $t=T_0$, we repeat this process,  replacing the jump time $T_0$  in the
algorithm above with $T_1-T_0$ and the state $(\mathbf{s}_0,\mathbf{x}_0)$ with $(\mathbf{s}_1,\mathbf{x}_1)$.
Iterating this process, jump by jump,  yields a cadlag process  $X(t)$, $t
\in [0, \infty)$. 

Under modest assumptions, it can be shown that $X(t)_{t \geq 0}$ is a strong
Markov process~\cite[\S 3]{Davis}. We only require that $Q\left(A;(\mathbf{s},\mathbf{x})\right)$
is a
measurable function of $(\mathbf{s},\mathbf{x})$ for each Borel set $A \in \mathcal E$
and a probability measure on $(E, \mathcal E)$ for each $(\mathbf{s},\mathbf{x}) \in
E \cup \Gamma^*$.  The rate function $\lambda:E \rightarrow \mathbb R^+$
must be measurable  with a little integrability: specifically,  for each
state $(\mathbf{s}, \mathbf{x}) \in E$ we require the existence of $\varepsilon > 0$ such
that the
function $\tau \rightarrow \lambda(\mathbf{s}, \varphi_\mathbf{s}(\tau, \mathbf{x}))$ is
summable for $\tau \in [0,\varepsilon)$.  These conditions are easily verified
in our model.

\subsection{The $M$-species model as a PDMP }
\label{subsec:Mspec}
We now show the $M$-species model defined in Section~\ref{subsec:model} is
a PDMP. Define the countable set of {\em species indices\/}
\begin{equation} 
\label{eq:speci-index-def}      
\mathcal{S}= \bigcup_{m \in \mathbb N}\{1, \dots, M\}^{m}.
\end{equation} 
It is convenient to introduce notation that makes explicit the distinction
between the number of particles in a state $(\mathbf{s},\mathbf{x})$ and the fixed parameter
$N$ that is the normalizing factor in the empirical measure \ref{eq:empirical}.
We denote the number of particles in the state $(\mathbf{s},\mathbf{x})$ by $|\mathbf{s}|$ and
write
\begin{equation}
        \label{eq:new-state}    
        (\mathbf{s},\mathbf{x}) = \left(s_1, \ldots, s_{|\mathbf{s}|}; x_1, \ldots, x_{|\mathbf{s}|})\right),
\end{equation}  
and the associated empirical measures is
\begin{equation}
        \label{eq:new-empirical}
     \mu^N_\sigma(\mathbf{s},\mathbf{x}) = \frac{1}{N}\sum_{i=1}^{|\mathbf{s}|} \delta_{x_i}
\mathbf{1}_{s_i=\sigma}, \quad \sigma =1, \ldots, M.  
\end{equation} 
For the $M$-species process, $N(t) = |\mathbf{s}|(t)$, and equations \ref{eq:state},\ref{eq:empirical}
and equations \ref{eq:new-state}--\ref{eq:new-empirical}are consistent.

Similarly, each open set $M_{\mathbf{s}} = \mathbb{R}_+^{|\mathbf{s}|}$ and 
\begin{equation}
E =\coprod_{\mathbf{s} \in \mathcal{S} }\mathbb{R}_+^{|\mathbf{s}|} = \left\{(\mathbf{s},\mathbf{x}): \mathbf{s} \in
\mathcal S, \, \mathbf{x} \in \mathbb{R}_+^{|\mathbf{s}|}\right\}.  
\end{equation}

The velocity fields $\mathbf{v}_{\mathbf{s}}$ on $E$ are obtained from the velocity fields $v_s$,
$s=1,\ldots,M$  of the $M$-species model, 
\begin{equation}
        \label{eq:vel}
\mathbf{v}_{\mathbf{s}} = \sum_{i = 1}^{|\mathbf{s}|} v_{s_i}(x_i)\frac{\partial}{\partial x_i},
\end{equation}
and the exit boundary is
\begin{equation}
\Gamma^* = \{(\mathbf{s},\mathbf{x})\in E| \;\hbox{there exists } \; (s_i,x_i)\; \hbox{such
that}   \;x_i = 0,\, s_i \in S_-\}. 
\end{equation}

In order to define the transition kernel $Q$,  we first describe the finite
set of `neighbors' $E^\partial_{\mathbf{s},\mathbf{x}}$ for each state $(\mathbf{s},\mathbf{x}) \in
\Gamma^*$. Each point $(\mathbf{s},\mathbf{x})$ has a finite number, $p$, of particles
with size zero. Let us label these particles with indices $i=k_1$,$k_2$,$\ldots$,
$k_p$, ordered such that the species $s_{k_1} \leq s_{k_2}\leq \ldots s_{k_p}$.
Let us begin by discussing the case when $p=1$ (this is the most important
case, since boundary events happen at distinct times with probability $1$).
When $p=1$, the set $E^\partial_{\mathbf{s},\mathbf{x}}$ may be decomposed into $M_-$
subsets, corresponding to boundary events at $M_-$ species. More precisely,
a boundary event occurs at species $l$, if the size $x_{j_1}=0$ and the associated
species $s_{j_1}=l$. According to the rules of Section~\ref{subsec:model},
at such a boundary event, $K^{(l)}$ random variables $(S_j,X_j)$ are chosen,
and mutated as in equation \ref{eq:mutation}. 
Each such mutation gives rise to a neighbor $(\mathbf{r},\mathbf{y})$ of $(\mathbf{s},\mathbf{x})$.
Since the $X_j$ are a random collection of $K^{(l)}$ points of $\mathbf{x}$, we may
write  
$X_j = x_{i_j}$, for indices $i_1,\ldots, i_{K^{(l)}}$. Then $(\mathbf{r},\mathbf{y})$ is
obtained from $(\mathbf{s},\mathbf{x})$ in two `sub-steps':
\begin{enumerate}
\item[(i)] {\em Pure mutation:\/} $\mathbf{x}$ is unchanged. The coordinates of
$\mathbf{s}$ are changed as follows: $s_{i_j} \mapsto R^{(l)}_j$, $j=1, \ldots,
K^{(l)}$.  Call this intermediate state $\hat{\mathbf{s}}$.
\item[(ii)] {\em Removal of zero size:\/} $\mathbf{x}$ is changed to $\mathbf{y}$ by deleting
the particle $x_{j_1}$ with size zero.  
\end{enumerate}
The probability $p^\partial(\cdot; \mathbf{s},\mathbf{x})$ of each  transition $(\mathbf{s},\mathbf{x})
\mapsto (\mathbf{r},\mathbf{y}) \in E^\partial_{\mathbf{s},\mathbf{x}}$ is given by the rules of Section~\ref{subsec:model}.
Finally, observe that these rules extend naturally to degenerate boundary
points, where $0=x_{k_1} = x_{k_2} = \ldots x_{k_p}$. In this case, according
to the rules of Section~\ref{subsec:model}, we order the points $x_{j_1},\ldots,
x_{j_p}$ so that the species $s_{j_1} < s_{j_2} < s_{j_p}$, and mutate and
remove particles $p$ times in sequence as above. 

Similarly, given an interior point $(\mathbf{s},\mathbf{x}) \in E$ we can use the mutation
matrix $R^{(0)}$ and the weights $w^{(0)}$ to define a set of interior
points $E^{(0)}_{\mathbf{s},\mathbf{x}}$ that $(\mathbf{s},\mathbf{x})$ jumps to along with the corresponding
probabilities  $p^{(0)}(\cdot;\mathbf{s},\mathbf{x})$. In this case, the transition involves
only a mutation and no removal of zero sizes.

In summary,  the transition kernel is given by
\begin{equation}  
\label{eq:transition}   
Q(A; \mathbf{s},\mathbf{x}) = \begin{cases}
\int_A  p^\partial(\mathbf{r},\mathbf{y}; \mathbf{s},\mathbf{x}) \mathbf{1}_{E^\partial _{\mathbf{s},\mathbf{x}}}(\mathbf{r},\mathbf{y})
 \,d(\mathbf{r},\mathbf{y}), & (\mathbf{s},\mathbf{x}) \in \Gamma^*, \\
\int_Ap^{(0)} (\mathbf{r},\mathbf{y}; \mathbf{s},\mathbf{x})  \mathbf{1}_{E^{(0)}_{\mathbf{s},\mathbf{x}}}(\mathbf{r},\mathbf{y}) \,
d(\mathbf{r},\mathbf{y}) , & (\mathbf{s},\mathbf{x}) \in E.
\end{cases}
\end{equation}
Since each particle carries an independent Poisson-$\beta$ clock $\beta$,
the first time $T$ that a clock rings follows the distribution $T  \sim \min_{1\le
i\le |\mathbf{s}|} \mathrm{Poisson}(\beta) = \mathrm{Poisson}(|\mathbf{s}|\beta)$.
Thus 
\begin{equation}
\label{eq:rates}
\lambda(\mathbf{s},\mathbf{x}) = \beta |\mathbf{s}|.
\end{equation} 
This completes the description of the $M$-species model as a PDMP.

\subsection{Conservation of total area and zero polyhedral defect}
A benefit of using a finite particle system for grain boundary coarsening
is   the  conservation of
area and zero polyhedral defect.  Using the notation presented above, we
may write area of polyhedral defect in an $N$ particle system as function
$A, P : E \rightarrow
[0,\infty)$ given by
\begin{equation}\label{finiteareapoly}
 A^N[(\mathbf{s},\mathbf{x})] =
\sum_{i = 1}^{|\mathbf s|} x_i \quad P^N[(\mathbf{s},\mathbf{x})] = \sum_{i = 1}^{|\mathbf
s|} (s_i-6). \quad
\end{equation} 
Here, we used the identity function  $id:x \mapsto x$.  Zero polyhedral defect
for a trivalent planar network means
that a grain
has, on average, six sides, which follows from (\ref{avgsides}) for networks
evolving on a torus.

\begin{theorem}  \label{thm:finitecons}
For the PDMP model with fixed parameters from Sect. \ref{sec:mspecgrain},
suppose we have initial polyhedral defect $P^N(0) =
0$ and total area $A^N(0) = A$, for all times $t$ where the process is well-defined
(i) $P^N(t) = 0$ and (ii) $A^N(t) = A.$
\end{theorem}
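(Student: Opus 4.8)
The plan is to induct over the almost surely locally finite sequence of critical event times $0 = \tau_0 < \tau_1 < \tau_2 < \cdots$ generated by the PDMP, showing that each invariant persists both across a smooth flow segment $(\tau_m,\tau_{m+1})$ and across the jump at $\tau_{m+1}$. Because the drift couples the two quantities through the von Neumann--Mullins rule, I would prove (i) first and feed it into (ii). Indeed, with $v_s = \frac{\pi}{3}(s-6)$, along any smooth segment the particle count and all species indices $s_i$ are fixed, and
\be
\frac{d}{dt} A^N(t) = \sum_{i=1}^{|\sss(t)|} v_{s_i} = \frac{\pi}{3}\sum_{i=1}^{|\sss(t)|}(s_i-6) = \frac{\pi}{3}\,P^N(t),
\ee
so that once $P^N \equiv 0$ is established, area conservation on smooth segments is automatic.

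For (i), the flow changes only the sizes $x_i$ and leaves every species index fixed, so $P^N$ is constant between critical events, and it remains to check that each event leaves $P^N$ unchanged. At a boundary event of type $l\in\{2,3,4,5\}$ a single grain of species $l$ is removed, contributing $6-l$ to $\Delta P^N$, while $K^{(l)}$ selected grains are mutated, the $j$-th contributing $R^{(l)}_{S_j,j}-S_j$. Reading off the explicit matrices (\ref{startrules})--(\ref{endrules}), the mutation totals are $-4,-3,-2,-1$ for $l=2,3,4,5$, exactly cancelling $6-l\in\{4,3,2,1\}$; at an interior (edge-deletion) event no grain is removed and the four mutations contribute $-1,-1,+1,+1$, summing to $0$. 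Hence $\Delta P^N=0$ in every case. One point that needs care is that these per-mutation increments are the intended ones only when no forbidden species is ever selected: this is guaranteed by the weights (\ref{sideweights}), since $w^{(l)}_k$ vanishes precisely on the species for which $R^{(l)}_{kj}=0$ (two- and three-sided grains for $l=2$, two-sided grains for $l=3,4$, two- and $M$-sided grains for $l=5$ and for edge deletion), so by (\ref{eq:flip-probs}) and (\ref{eq:flip-probs-int}) those grains are chosen with probability zero and the case analysis is exhaustive. Degenerate boundary events, where $p>1$ grains reach the origin simultaneously, are handled by the sequential convention of Section~\ref{subsec:model}: the $p$ removal-with-mutation steps are composed one after another, and since each individually preserves $P^N$, so does their composition.

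With (i) in hand, (ii) follows quickly. On each smooth segment the displayed identity gives $\frac{d}{dt}A^N = \frac{\pi}{3}P^N = 0$, so $A^N$ is constant there; at a critical event every mutation has the form $(S_j,X_j)\mapsto(R^{(l)}_{S_j,j},X_j)$ and so preserves the size $X_j$, while the only particle ever deleted has size $0$, so $A^N$ is unchanged across jumps as well. Inducting over $m$ then yields $A^N(t)=A$ throughout the interval of existence. The main obstacle is combinatorial rather than analytic: it lies entirely in the bookkeeping at critical events---verifying the cancellation $\Delta P^N=0$ against each mutation matrix and confirming that the weights genuinely exclude every topology-violating selection, so that no stray $R^{(l)}_{kj}=0$ branch is ever taken.
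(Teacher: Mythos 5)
Your proof is correct and follows essentially the same route as the paper: conservation on smooth segments (species indices frozen, so $P^N$ constant, and $\tfrac{d}{dt}A^N \propto P^N = 0$) plus a case-by-case check that each jump preserves $P^N$ and each mutation preserves particle sizes. In fact you are somewhat more complete than the paper, which verifies only the three-sided case of the mutation bookkeeping and does not spell out that the vanishing weights $w^{(l)}_k$ exclude the $R^{(l)}_{kj}=0$ branches; your treatment of those points is a welcome addition rather than a deviation.
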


\begin{proof} 

We consider a well-defined path $(\mathbf s(t), \mathbf x(t))$ for $t \in
[0,T]$. In all realizations,  this path will have a finite set of  jump times
$\tau_1\le \dots\le \tau_n$. 

To show conservation of 
zero polyhedral defect,  suppose for a state $(\mathbf{s},\mathbf{x})$ that $(\mathbf
r, \mathbf y) \in E^\partial _{(\mathbf{s},\mathbf{x})}$. We will directly show that defect
does not change over jumps, or that  
\begin{equation}\label{pdiff}
P^N[(\mathbf{s},\mathbf{x})]=  P^N[(\mathbf
r, \mathbf y)]
\end{equation}
 in the case of a three-sided grain vanishing (other critical events have
similar proofs). Under a reindexing, we may write our state as 
\begin{equation}
(\mathbf{s},\mathbf{x}) = ((3, s_2, \dots,s_{|\mathbf s|}), (0, x_2, \dots, x_{|\mathbf
s|})).
\end{equation}
We may assume, without loss of generality, From (\ref{startrules})-(\ref{endrules})
three particles with indices 2,3, and 4 lose an edge from The mutated state
then takes the form
\begin{equation}
(\mathbf
r, \mathbf y)  = (s_2-1, s_3-1, s_4-1, \dots, s_{|\mathbf s|}), (x_2, \dots,
x_{|\mathbf
s|})),
\end{equation}
from which (\ref{pdiff}) follows immediately. This implies that $\Delta P^N[(\mathbf{s}(\tau_i),\mathbf{x}(\tau_i))]
= 0$.  Since $\mathbf s$ does not change between any jumps, if $P^N(\mathbf{s}(0),\mathbf{x}(0))
=0$, then $P^N(\mathbf{s}(t),\mathbf{x}(t))= 0$ for all times $t$ in which the PDMP is
well defined.

To show conservation of total area, again assume zero initial polyhedral
defect.  Then it is immediate that $\Delta A^N[(\mathbf{s}(\tau_i),\mathbf{x}(\tau_i)
] = 0$, and for $t \in (\tau_i, \tau_{i+1})$ for $i = 1, \dots, n-1$,  
 \begin{equation}
 \frac{\partial A^N}{\partial t} =\sum_{i = 1}^{|\mathbf s|}  \frac{\partial
A^N}{\partial x_i}  \frac{\partial x_i}{\partial t}  = \sum_{i = 1}^{|\mathbf
s|} (s_i(t)-6) = P^N[(\mathbf{s}(t),\mathbf{x}(t))] = 0.  
 \end{equation} 
 
 \end{proof}

\section{Proof of well-posedness}
\label{sec:wp}
\begin{figure}
\includegraphics[width=\textwidth]{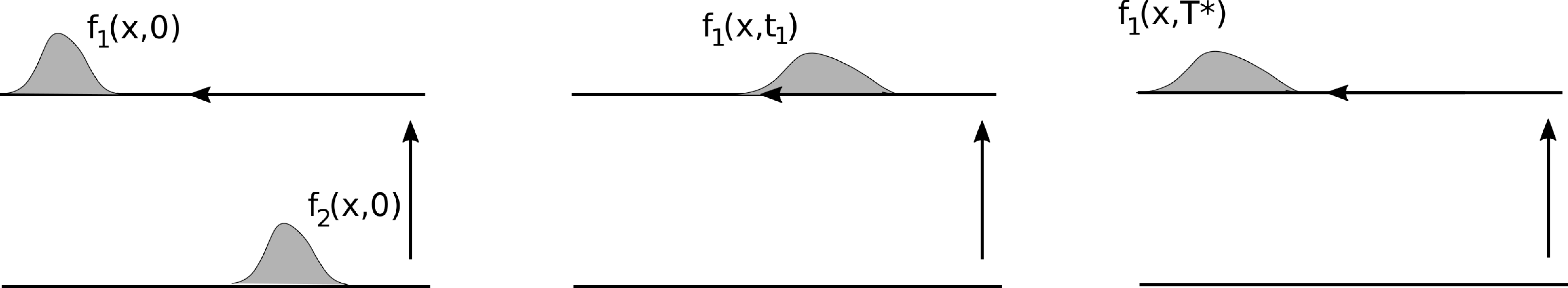}
\caption{\textbf{Limiting equations for a two species example with} $T^*<\infty$.
The PDMP is characterized by $v_1 = -1, v_2 = 0, K^{(1)} = 1, R_{21}^{(1)}
= 1,$ and weights $w_2^{(1)} = 1, w_1^{(1)} = 0$. Left: Initial densities
on the two species with disjoint supports and  $F_1 (0)= F_2(0) = 1/2.$ Center:
As the initial density of species 1 is transported to the origin, species
2 mutates to species 1. At some time $t_1$,  all of species 2 have mutated,
so that $f_2(x,t_1) = 0$. Right: the density is transported until  it reaches
the origin at time $T^*$, at which point mutation probabilities are undefined.}
\label{fig:counterpdmp}  
\end{figure}

Theorem~\ref{thm:wp} is proved in the following lemmas. The structure of
the kinetic equations is a little more transparent when the flux is rewritten
as a matrix vector product. Let $f=(f_1, \ldots, f_M)$ and ${\bm \jmath} =(j_1,
\ldots, j_M)$. We may then write 
\begin{equation}
\label{eq:flux-matrix}
{\bm \jmath} = \left( \sum_{l=1}^l A^{(l)} \dot{L}_l  + \beta \gamma(t) \, A^{(0)}
\right) f, 
\end{equation}
where the matrices $A^{(l)}$ and  $A^{(0)}$ have off-diagonal terms given
by
\begin{equation}
\label{eq:defA}
A^{(l)}_{s,\sigma} = J^{(l)}_{s,\sigma} W^{(l)}_s, \quad A^{(0)}_{s,\sigma}
= J^{(0)}_{s,\sigma} w^{(0)}_s, \quad \sigma \neq s, 
\end{equation}
and diagonal terms given by
\begin{equation}
\label{eq:defB}
A^{(l)}_{\sigma,\sigma} = - K^{(l)} W^{(l)}_\sigma, \quad A^{(0)}_{\sigma,\sigma}
= - K^{(0)}_{\sigma,\sigma} w^{(0)}_\sigma. 
\end{equation}
 
We first show that the flux ${\bm \jmath}$, defined in \ref{eq:flux-matrix}, 
is a locally Lipschitz map. This allows us to obtain local existence of positive
mild solutions by Picard's method. We then extend the solutions to a maximal
interval of existence by utilizing a more careful estimate of the flux. 

Let  $B_r(f_0) \subset X$ denote the ball of radius $r>0$ centered at $f_0
\in X$. As in \ref{eq:def-F} we denote
\[ F_0 = \sum_{\sigma=1}^M \int_0^\infty f_{0,\sigma}(x). \]

We adopt the following convention in the proof. The letter $C$ denotes a
universal, positive, finite constant depending only on the parameters of
the model such as the number of species $M$, the constant velocities $v_\sigma$,
the number of mutations $K^{(l)}$ and $K^{(0)}$, the  mutation matrices $R^{(l)}$
and $R^{(0)}$, the weights $w^{(l)}$ and $w^{(0)}$. It does not depend on
$f_0$. 
\begin{lemma}[Uniform bounds]
\label{le:un-bound}     
Assume $f_0 \in X$ is positive and non-zero. There exists $r>0$, depending
only on $f_0$, such that for each $f \in B_r(f_0)$.
\begin{equation}
\label{eq:flux-bound1}
\| {\bm \jmath} (f) \| \leq C \left( \beta + \frac{\|f_0 \|}{F_0}  \right) \|f\|.
\end{equation}
\end{lemma}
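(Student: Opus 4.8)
The plan is to use the standing hypothesis of Theorem~\ref{thm:wp} --- that each weighted mass $D\lsup(f_0) := \sum_{n=1}^M w\lsup_n F_{0,n}$ is strictly positive for $l = 0,\dots,M_-$ --- together with continuity of the mass functionals to keep every denominator in $\jflux$ bounded away from zero on a small enough ball. By \qref{eq:flux-matrix} the flux is assembled from three kinds of ingredients: the boundary rates $\dot L_l = -v_l f_l(0,t)$; the weighted fractions $W\lsup_\sigma = w\lsup_\sigma/D\lsup(f)$ and $\gamma = F/D\intsup(f)$; and the fixed integer matrices $J\lsup, J\intsup$ with the fixed weight vectors $w\lsup, w\intsup$. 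The matrices and weights are model constants, so the whole estimate reduces to bounding $\dot L_l$, $W\lsup_\sigma$ and $\gamma$ uniformly over $f \in B_r(f_0)$.

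First I would fix the radius. For positive $f$ one has $F_n(f) = \|f_n\|_{L^1}$, so $|F_n(f)-F_n(f_0)| \le \|f-f_0\|$ and each $D\lsup$ is Lipschitz on $X$. Since $D\lsup(f_0) > 0$, choose $r>0$, depending only on $f_0$, so small that for every $f \in B_r(f_0)$ one has simultaneously $D\lsup(f) \ge \tfrac12 D\lsup(f_0) > 0$ for all $l = 0,\dots,M_-$ and $\|f\| \le 2\|f_0\|$. This is the decisive step and the only place the hypothesis is used: it is exactly what prevents the scale-invariant ratios $W\lsup$ and $\gamma$ from blowing up as masses get small.

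With $r$ fixed the remaining estimates are routine. On $B_r(f_0)$ one has $W\lsup_\sigma \le 2w\lsup_\sigma/D\lsup(f_0)$, $\gamma \le 2F/D\intsup(f_0)$, and $|\dot L_l| \le |v_l|\,\|f_l\|_{L^\infty} \le |v_l|\,\|f\|_{L^\infty}$. Substituting these into \qref{eq:flux-matrix} and estimating each summand in both $\|\cdot\|_{L^1}$ and $\|\cdot\|_{L^\infty}$ --- using that the entries of $J\lsup, J\intsup$ are fixed and that $F = \|f\|_{L^1} \le \|f\|$ for positive $f$ --- the interior-event piece carries the explicit factor $\beta$ and is bounded by $C\beta\|f\|$, while the boundary-event piece is bounded by a constant times $\|f\|_{L^\infty}\,\|f\|$ divided by $\min_l D\lsup(f_0)$. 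Each flux piece is a product of two norms of $f$ and so looks quadratic, but on the ball the factor originating from $\dot L_l$ (respectively from $\gamma$) is at most $2\|f_0\|$, which collapses the product to a genuinely linear bound in $\|f\|$. Identifying $F_0 = \|f_0\|_{L^1}$ and collecting the resulting prefactors into the displayed combination $\beta + \|f_0\|/F_0$ then yields \qref{eq:flux-bound1}.

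The main obstacle is the first step. Because several weights $w\lsup_n$ vanish (for instance on two- and $M$-sided grains), the denominators $D\lsup$ are \emph{not} controlled from below by the total mass alone, so one cannot obtain a uniform lower bound by soft arguments; the positivity hypothesis together with the Lipschitz continuity of the masses is genuinely needed, and this is precisely why the admissible radius $r$ --- and hence the local existence theory built on this lemma --- must depend on $f_0$. The rest is bookkeeping over the two norm components and the finitely many indices $l, s, \sigma$.
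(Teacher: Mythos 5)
Your argument follows the paper's proof essentially step for step: shrink the ball so that each weighted mass $\sum_{n} w\lsup_n F_n$ stays above half its initial value, bound $\dot L_l$ by $C\|f\|_{L^\infty}\le C\|f_0\|$, observe that $\gamma$ is bounded by a constant so the interior flux carries the factor $\beta$ linearly in $\|f\|$, and collapse the apparently quadratic boundary term using $\|f\|\le C\|f_0\|$ on the ball. The only cosmetic difference is that you keep the denominator lower bound in the form $\tfrac12\sum_n w\lsup_n F_{0,n}$, whereas the paper further bounds this below by $\tfrac12\underline{w}F_0$ with $\underline{w}$ the smallest weight carried by a species of positive initial mass, which is the step that converts your prefactor into the displayed $\|f_0\|/F_0$.
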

\begin{proof}
Recall that the flux ${\bm \jmath}(f)$ is defined by equations \ref{eq:flux-matrix}--\ref{eq:defA}.
We will estimate each term in this expression in turn.

We first estimate $\dot{L}$. We find from  \ref{eq:fluxdef} that for every
$l \in S_-$
\begin{equation}
\label{eq:boundL}
|\dot{L}_l| \leq |v_l| |f_l(0)| \leq \left(\max_\sigma |v_\sigma| \right)
\|f_l \|_{L^\infty} \leq C \|f_0\|.
\end{equation}

In order to estimate the weights $W^{(l)}_k$ defined by \ref{eq:numbers-not2},
we first establish a lower bound on the denominator $\sum_{n=1}^M w^{(l)}_n
F_n$ for each $f \in B_r(f_0)$. Let 
\[ \underline{w} = \min_{\sigma,l} \{w^{(l)}_\sigma: F_{0,\sigma}>0\}, \qquad
\overline{w} = \max_{\sigma,l} w^{(l)}_\sigma. \]
We then have 
\begin{eqnarray}
\nonumber     
\lefteqn{ \sum_{n=1}^M w^{(l)}_n F_n = \sum_{n=1}^M w^{(l)}_n (F_n  - F_{0,n}+
F_{0,n})  \geq \underline{w} F_0  - \sum_{n=1}^M w^{(l)}_n |F_n -F_{0,n}|
}\\
\label{eq:lowerW}
&&
\geq \underline{w} F_0  - \sum_{n=1}^M w^{(l)}_n \|f_n -f_{0,n}\|_{L^1} \geq
\underline{w} F_0 - \overline{w} \|f-f_0\| \geq \frac{1}{2}\underline{w}
F_0,
\end{eqnarray}
provided the radius $r$ satisfies
\begin{equation}
\label{eq:boundr}
 r <  \frac{\underline{w}}{2\overline{w}}F_0.
 \end{equation}
We assume that $r$ is chosen as above. It then follows from \ref{eq:defA}
and \ref{eq:defB} that each entry in the matrix $A^{(l)}$ is bounded above
by
\begin{equation}
\label{eq:A-op0}
|A_{\sigma k}| \leq \frac{C}{F_0}.
\end{equation}
Thus, the operator norm $\|A^{(l)} \|_o$ of the matrix $A^{(l)}$  
satisfies the estimate\footnote{ $\|A\|_o := \sup_{|v|=1}|Av|$, with $v \in
\mathbb{R}^M$, $|v|^2 = \sum_{n=1}^M v_n^2$. Since $M$ is finite any norm may be
chosen.} 
\begin{equation}
\label{eq:A-op}
\| A^{(l)}\|_o \leq \frac{C}{F_0}.
\end{equation}
We combine \ref{eq:A-op} with \ref{eq:boundL} to see that the flux due
to boundary events is bounded by
\begin{equation}
\label{eq:j-est}
\left\| \left( \sum_{l=1}^{M_-} A^{(l)} \dot{L}_l \right) f \right\| \leq
C\frac{\|f_0\|\|f\|}{F_0}.
\end{equation}

The estimates for the interior events are simpler. We use the definition
of  $\gamma$ in \ref{eq:numbers-not2} and the lower bound \ref{eq:lowerW}
to obtain the estimate
\begin{equation}
\label{eq:gamma-up}
0 \leq \gamma \leq C\frac{F}{F_0} \leq C, \quad f \in B_r(f_0).
\end{equation}
It follows from the  definition of $A^{(0)}$ in \ref{eq:defA}--\ref{eq:defB}
that $\|A^{(0)}\|_o \leq C$. Thus, the flux from interior events is bounded
by
\begin{equation}
\label{eq:int-flux}
\| \beta \gamma  A^{(0)} f \| \leq C \beta \|f\|.
\end{equation} 
We combine estimates \ref{eq:j-est} and \ref{eq:int-flux} to complete the
proof.
\end{proof}
\begin{lemma}[Lipschitz estimate] 
\label{le:loc-lip}      
Let $f_0$ and $r$ be as in Lemma~\ref{le:un-bound}. Then for every $f,g \in
B_r(f_0)$       
\begin{equation}
\label{eq:flux-bound2}
\| {\bm \jmath}(f)  -{\bm \jmath} (g)\| \leq C \left( \beta + \frac{\|f_0\|}{F_0}\right)
\left( 1+ \frac{\|f_0\|}{F_0} \right) \|f-g\|.
\end{equation}
\end{lemma}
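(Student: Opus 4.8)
The plan is to exploit the fact that the only nonlinear dependence of $\jflux$ on $f$ enters through two scalar normalizing denominators. Writing $D_l(f) = \sum_{n=1}^M w\lsup_n F_n$ and $E(f) = \sum_{n=1}^M w\intsup_n F_n$, the quantities of \qref{eq:numbers-not2} are $W\lsup_s(f) = w\lsup_s/D_l(f)$ and $\gamma(f) = F/E(f)$, so that $A\lsup(f) = \hat A\lsup / D_l(f)$ with $\hat A\lsup$ the \emph{constant} matrix whose entries are $\hat A\lsup_{s,\sigma} = J\lsup_{s,\sigma} w\lsup_s$ off the diagonal and $\hat A\lsup_{\sigma,\sigma} = -K\lsup w\lsup_\sigma$ on it, while $A\intsup$ is already constant. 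From \qref{eq:flux-matrix} and \qref{eq:fluxdef} this gives
\[
\jflux(f) = \sum_{l=1}^{M_-} \frac{-v_l f_l(0)}{D_l(f)}\, \hat A\lsup f \;+\; \beta\,\frac{F}{E(f)}\, A\intsup f ,
\]
which I would read as a sum of products of three elementary factors: a boundary value $-v_l f_l(0)$ (or the total mass $\beta F$) that is \emph{linear} in $f$, a \emph{reciprocal} $1/D_l(f)$ or $1/E(f)$, and a constant matrix applied to $f$.

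First I would telescope each product. For a product $P_1(f)P_2(f)P_3(f)$ one writes the difference at $f$ and $g$ as a sum of three terms, each isolating one factor-difference $P_i(f)-P_i(g)$ while freezing the other two at $f$ or $g$. The frozen factors are then bounded by the uniform estimates of Lemma~\ref{le:un-bound}: on $B_r(f_0)$ the denominators satisfy $D_l(f),E(f) \geq \tfrac12\underline{w}F_0$ by \qref{eq:lowerW}, whence $1/D_l(f),1/E(f) \leq C/F_0$; the ratio $\gamma(f)$ is bounded by a constant by \qref{eq:gamma-up}; boundary values obey $|f_l(0)| \leq \|f\|_{L^\infty} \leq \|f\|$; and $\|\hat A\lsup f\|,\|A\intsup f\| \leq C\|f\|$.

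The factor-differences themselves are elementary. The linear ones, $|f_l(0)-g_l(0)| \leq \|f-g\|$, $|F(f)-F(g)| \leq \|f-g\|$, and $\|\hat A\lsup(f-g)\|,\|A\intsup(f-g)\| \leq C\|f-g\|$, are immediate. The only genuinely nonlinear piece is the difference of reciprocals, which I would control by
\[
\Big| \frac{1}{D_l(f)} - \frac{1}{D_l(g)}\Big| = \frac{|D_l(g)-D_l(f)|}{D_l(f)\,D_l(g)} \leq \frac{\overline{w}\,\|f-g\|_{L^1}}{\big(\tfrac{1}{2}\underline{w}F_0\big)^2} \leq \frac{C}{F_0^2}\,\|f-g\| ,
\]
and identically for $1/E$. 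This is the step where the uniform lower bound on the denominators — and hence the choice of radius \qref{eq:boundr} — is essential, and it is really the only obstacle; everything else is linear bookkeeping.

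The precise constant structure then follows from one bookkeeping distinction. Quantities of $L^1$ type, such as $F(g)$ and $\|g\|_{L^1}$, I would bound by $CF_0$ (using $\|g\|_{L^1} \leq F_0 + r \leq CF_0$), whereas full-norm quantities, namely boundary values and $\|A\intsup f\|$, I would bound by $C\|f_0\|$ (using $\|f\| \leq \|f_0\| + r \leq C\|f_0\|$). With this accounting, in the boundary terms the reciprocal-difference piece pairs two full-norm factors and produces $C(\|f_0\|^2/F_0^2)\|f-g\|$, while the two linear-difference pieces give $C(\|f_0\|/F_0)\|f-g\|$; in the interior term the factor $F(g) \leq CF_0$ cancels one power of $F_0$, so all pieces stay at order $C\beta\|f-g\|$ and $C\beta(\|f_0\|/F_0)\|f-g\|$. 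Summing the finitely many boundary terms yields $C(\|f_0\|/F_0)(1+\|f_0\|/F_0)\|f-g\|$ and the interior term yields $C\beta(1+\|f_0\|/F_0)\|f-g\|$; adding these and factoring gives exactly \qref{eq:flux-bound2}.
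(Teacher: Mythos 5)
Your proposal is correct and follows essentially the same route as the paper: both decompose each boundary term as a product of the boundary value $\dot L_l(f)$, the matrix $A\lsup(f)$ (whose only $f$-dependence is the reciprocal of $\sum_n w\lsup_n F_n$), and $f$ itself, telescope the difference of products, and control the reciprocal difference via the lower bound \qref{eq:lowerW} guaranteed by the choice of radius \qref{eq:boundr}, with the interior term handled identically through $\gamma$. The bookkeeping of $L^1$-type versus full-norm factors reproduces the constant $C(\beta + \|f_0\|/F_0)(1+\|f_0\|/F_0)$ exactly as in the paper.
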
     
\begin{proof}
We use the expression \ref{eq:flux-matrix} to obtain the inequality
\begin{eqnarray}
\lefteqn{\| {\bm \jmath}(f) -{\bm \jmath}(g) \|} \\
\nonumber
&&  \leq \sum_{l=1}^{M_-} \| A^{(l)}(f)\dot{L}_l(f) f - A^{(l)}(g)\dot{L}_l(g)
g \| + \beta \| \gamma(f) A^{(0)} f - \gamma(g) A^{(0)}  g\|. 
\end{eqnarray}
Let $l$ be fixed. It is clear that 
\begin{equation}
\label{eq:L-diff}
| \dot{L}_l(f) - \dot{L}_l(g)| = |v_l| |f(0)-g(0)| \leq C \|f-g\|. 
\end{equation}
For each $k$, the difference $\left|W^{(l)}_k(f)-W^{(l)}_k(g)\right|$ is estimated
as follows. Let $G_n = \int_0^\infty g_n(x) \, dx$ and $G = \sum_{n=1}^M
G_n$. Then
\begin{eqnarray}
\label{eq:W-diff}
\left| \frac{w^{(l)}_k}{\sum_{n=1}^M w^{(l)}_n F_n} - \frac{w^{(l)}_k}{\sum_{n=1}^M
w^{(l)}_n G_n}\right| && =   \frac{w^{(l)}_k \left| \sum_{n=1}^M w^{(l)}_n (F_n
-G_n)\right|} {\left|\sum_{n=1}^M w^{(l)}_n F_n\right| \left| \sum_{n=1}^M
w^{(l)}_n G_n\right|} \\
\nonumber
&& \leq \frac{C}{F_0^2}  \sum_{n=1}^M \| f_n -g_n \|_{L^1} \leq  \frac{C}{F_0^2}
\|f-g\|,
\end{eqnarray} 
using \ref{eq:lowerW}. It then follows from \ref{eq:defA} and \ref{eq:defB}
that each term in the matrix $A^{(l)}(f)-A^{(l)}(g)$ satisfies an estimate
as above, so that 
\begin{equation}
\label{eq:op-diff}
\| A^{(l)}(f) - A^{(l)}(g) \|_o \leq \frac{C}{F_0^2} \|f-g\|.
\end{equation}
Finally, we use the estimates \ref{eq:boundL}, \ref{eq:A-op}, \ref{eq:L-diff}
and \ref{eq:W-diff}  to obtain the Lipschitz bound:
\begin{eqnarray}
\nonumber     
\lefteqn{\| A^{(l)}(f)\dot{L}_l(f) f - A^{(l)}(g)\dot{L}_l(g) g \| \leq \|
A^{(l)}(f) - A^{(l)}(g) \|_o |\dot{L}_l(f)| \|f\|} \\
&& \nonumber
 + \|A^{(l)}(g)\|_0 |\dot{L}_l(f)  - \dot{L}_l(g)| \|f\| + \|A^{(l)}(g)\|_o|L_l(g)|
\|f-g \|\\
\nonumber
&& 
\leq C \left( \frac{\|f_0\|^2}{F_0^2} + \frac{\|f_0\|}{F_0} \right) \|f-g\|.
\end{eqnarray} 
A calculation similar to \ref{eq:gamma-up}  yields the estimate 
\begin{equation}
\label{eq:int-flux-lip1}
\|\gamma(f)-\gamma(g)\|  \leq \frac{C}{F_0}|F-G| \leq   \frac{C}{F_0}\|f-g\|.
\end{equation}
Thus, we find (also using the fact that $A^{(0)}$ is a constant)
\begin{equation}
\beta \| \gamma(f)A^{(0)} f -  \gamma(g)A^{(0)} g \| \leq C\beta \left(
1+ \frac{\|f_0\|}{F_0} \right) \|f-g\|.
\end{equation}

\end{proof}     
\begin{lemma}[Local existence]
Assume $f_0 \in X$ is positive and non-zero. There exists a time $T_0>0$
and a map $f \in C([0,T];X)$ such that $f$ is the unique mild solution to
\ref{eq:kinetic} on the time interval $[0,T]$ that satisfies the initial
condition $f(0)=f_0$.  

Further, $f(t)$ is positive for each $t \in [0,T]$.
\end{lemma}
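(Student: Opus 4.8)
The plan is to recast the mild-solution formula \qref{eq:uq1} as a fixed-point equation and apply the contraction mapping principle, with Lemmas~\ref{le:un-bound} and \ref{le:loc-lip} supplying the two key ingredients. Write $S(t)$ for the advection semigroup $\bigl(S(t)g\bigr)_\sigma(x)=g_\sigma(x-v_\sigma t)$, extended by zero for negative argument as in the convention following \qref{eq:uq1}. Then \qref{eq:uq1} reads
\be
\Phi(f)(t) = S(t) f_0 + \int_0^t S(t-\tau)\, \jflux\bigl(f(\tau)\bigr)\, d\tau ,
\ee
and a mild solution on $[0,T]$ is precisely a fixed point of $\Phi$ in $C([0,T];X)$. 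The first observation is that $S(t)$ is non-expansive on $X$: a translation followed by truncation at the origin increases neither the $L^1$ nor the $L^\infty$ norm, so $\|S(t)\|_o\le 1$ for all $t\ge 0$; moreover $t\mapsto S(t)g$ is strongly continuous on $X$.

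I would then fix the radius $r>0$ furnished by Lemma~\ref{le:un-bound} and work in the closed, hence complete, set
\be
\mathcal B_T = \Bigl\{ f \in C([0,T];X) : f(0)=f_0,\ \textstyle\sup_{t\in[0,T]} \|f(t)-f_0\| \le r \Bigr\},
\ee
so that $f(t)\in B_r(f_0)$ throughout and the estimates \qref{eq:flux-bound1}--\qref{eq:flux-bound2} apply. Using $\|S(t-\tau)\|_o\le 1$ and Lemma~\ref{le:un-bound},
\be
\|\Phi(f)(t)-f_0\| \le \|S(t)f_0 - f_0\| + T\, C\Bigl(\beta + \tfrac{\|f_0\|}{F_0}\Bigr)\bigl(\|f_0\|+r\bigr),
\ee
and strong continuity drives the first term to zero as $t\downarrow 0$, so for $T$ small $\Phi$ maps $\mathcal B_T$ into itself. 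Likewise Lemma~\ref{le:loc-lip} gives
\be
\textstyle\sup_{t\in[0,T]}\|\Phi(f)(t)-\Phi(g)(t)\| \le T\, C\Bigl(\beta + \tfrac{\|f_0\|}{F_0}\Bigr)\Bigl(1+\tfrac{\|f_0\|}{F_0}\Bigr)\sup_{t\in[0,T]}\|f(t)-g(t)\|,
\ee
so shrinking $T$ further makes $\Phi$ a contraction. Banach's theorem yields a unique fixed point $f\in\mathcal B_T$, i.e. the unique mild solution with $f(0)=f_0$ on $[0,T]$; continuity of $t\mapsto f(t)$ into $X$ is inherited from the Duhamel representation, and uniqueness holds among all (not merely positive) mild solutions in $\mathcal B_T$, since any such is a fixed point of the contraction $\Phi$.

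For positivity I would exploit that the loss term \qref{eq:jminus} is $j_\sigma^-(x,t)=c_\sigma(t)\,f_\sigma(x,t)$ with an $x$-independent coefficient $c_\sigma(t)=\sum_l \dot L_l K\lsup W_\sigma\lsup + \beta\gamma K\intsup w_\sigma\intsup$, while the gain $j_\sigma^+$ in \qref{eq:jplus} is a combination of the $f_s$ with non-negative coefficients. Solving \qref{eq:kinetic} along characteristics with this diagonal loss absorbed into an integrating factor gives, for the fixed point $f$,
\be
f_\sigma(x,t) = e^{-\int_0^t c_\sigma}\, f_{0,\sigma}(x-v_\sigma t) + \int_0^t e^{-\int_\tau^t c_\sigma}\, j_\sigma^+\bigl(f\bigr)(x-v_\sigma(t-\tau),\tau)\, d\tau .
\ee
The exponential weights are positive regardless of the sign of $c_\sigma$, and the first term is non-negative because $f_0\ge 0$; the only possible negativity comes from the gain term, whose negative part is bounded by $\sum_s f_s^-$ with coefficients that are bounded on $\mathcal B_T$. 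Estimating the negative parts $f_\sigma^-$ then produces a Gronwall inequality $\|f^-(t)\|\le C\int_0^t\|f^-(\tau)\|\,d\tau$, whence $f^-\equiv 0$ and $f(t)$ is positive on $[0,T]$.

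The genuinely delicate points are not the algebra but the two I have flagged: verifying that the fixed point lives in $C([0,T];X)$, which rests on strong continuity of the truncated translation semigroup in the combined $L^1+L^\infty$ norm (this is exactly where the continuity built into $X$ is used); and obtaining positivity without circularity, for which the integrating-factor representation — legitimate precisely because the loss coefficient $c_\sigma(t)$ is independent of $x$ — together with the Gronwall bound on $f^-$ is the clean device. Everything else is a routine assembly of the two preceding lemmas into Picard's scheme.
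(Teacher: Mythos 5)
Your proposal is correct and follows essentially the same route as the paper: a contraction-mapping argument built on Lemmas~\ref{le:un-bound} and~\ref{le:loc-lip}, followed by the same integrating-factor representation (the paper's \qref{eq:uq2}) exploiting that the loss coefficient is independent of $x$. The only difference is that you spell out the invariant ball and the Gronwall bound on the negative part, details the paper compresses into ``a standard application of the contraction mapping theorem'' and ``which clearly preserves positivity.''
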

\begin{proof}
Let $r(f_0)$ be chosen as in Lemma~\ref{le:un-bound}. It then follows from
Lemma~\ref{le:loc-lip} that the flux ${\bm \jmath}(f)$ is locally Lipschitz. The
existence of a unique mild solution now follows by a standard application
of the contraction mapping theorem. 

The fact that the solution preserves positivity is seen as follows. We note
that the loss term in \ref{eq:jminus}, may be rewritten as $j_\sigma^-(x,t)
= \alpha_\sigma(t) f_\sigma(x,t)$ where
\begin{equation}
\label{eq:def-alpha}
\alpha_\sigma(t) = \sum_{l = 1}^{M_-} \dot{L_l} K^{(l)} W_\sigma^{(l)}(t) +
 \beta \gamma(t) K^{(0)}  w^{(0)}_\sigma.
\end{equation}
We now rewrite the kinetic equation \ref{eq:kinetic} in the form 
\begin{equation}
\label{eq:pres-pos}
\partial_t f_\sigma + v_\sigma \partial_x f_\sigma + \alpha_\sigma(t) f_\sigma
= j_\sigma^+,
\end{equation}
and observe that integration along characteristics yields 
\begin{equation}
\label{eq:uq2}
f_\sigma(x,t) = e^{-\int_0^t \alpha(s) \, ds} f_\sigma(x-v_\sigma t, 0) +
\int_0^t e^{-\int_\tau^t \alpha(s) \, ds} j_\sigma^+ \left(x-v_\sigma(t-\tau),\tau\right)
\, d\tau,
\end{equation}
which clearly preserves positivity.
\end{proof}

\begin{lemma}[Maximal existence]
Let $f \in C([0,T];X)$ be a positive, mild solution. Then 
\begin{equation}
\label{eq:cons-num}     
F(t) + \sum_{l=1}^{M_-} L_l(t) = F(0), \quad L_l(t) := |v_l| \int_0^t f_l(0,s)
\, ds, \quad t \in [0,T].
\end{equation}
There also exists a universal constant $C >0 $ such that 
\begin{equation}
\label{eq:infinity-bound}       
\|f (t) \|_{L^\infty} \leq \|f(0)\|_{L^\infty} \exp\left( C\int_0^t\Phi(\tau)d\tau\right)
, \quad t \in [0,T],
\end{equation}
where $\Phi(t) = t+\max_{l \le M_-}\sum W_k^{(l)}(t)$. 
\end{lemma}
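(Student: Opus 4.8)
The plan is to prove the two assertions separately, since \qref{eq:cons-num} is an exact balance law obtained by integrating the equation in $x$, whereas \qref{eq:infinity-bound} is a Gronwall estimate built on the positivity-preserving mild representation \qref{eq:uq2}. The glue between them is that the balance law feeds the $L^\infty$ estimate by bounding $F(t)\le F(0)$.

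For the balance law I would first record the purely algebraic fact that the \emph{total} mutation flux vanishes pointwise, $\sum_{\sigma=1}^M j_\sigma(x,t)=0$. This follows by summing \qref{eq:jplus}--\qref{eq:jminus} over $\sigma$ and using the row sums $\sum_\sigma J\lsup_{s,\sigma}=K\lsup$ and $\sum_\sigma J\intsup_{s,\sigma}=K\intsup$ from \qref{eq:Jequality}: after interchanging the order of summation, $\sum_\sigma j_\sigma^+$ collapses to $\sum_s\bigl(\sum_l \dot L_l K\lsup W_s\lsup+\beta\gamma K\intsup w\intsup_s\bigr)f_s$, which equals $\sum_\sigma j_\sigma^-$ upon relabelling $s\leftrightarrow\sigma$. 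In words, mutations conserve particle number, so only advective outflow at the origin can change $F$. I would then integrate \qref{eq:kinetic} over $x\in[0,\infty)$: the advection term contributes $v_\sigma\bigl(f_\sigma(\infty,t)-f_\sigma(0,t)\bigr)$, where $f_\sigma(\infty,t)=0$ by integrability (choosing $x_n\to\infty$ with $f_\sigma(x_n,t)\to0$). For $\sigma\in S_0$ this term is zero since $v_\sigma=0$; for $\sigma\in S_+$ it vanishes by \qref{eq:kinetic-BC}; and for $\sigma\in S_-$ it equals $-|v_\sigma|f_\sigma(0,t)=-\dot L_\sigma$ by \qref{eq:fluxdef}. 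Summing over $\sigma$ and using the pointwise vanishing of $\sum_\sigma j_\sigma$ gives $\tfrac{d}{dt}F(t)=-\sum_{l=1}^{M_-}\dot L_l$, and integrating in time yields \qref{eq:cons-num}.

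For the $L^\infty$ estimate I would work from \qref{eq:uq2}. Since $f$ is positive, every term in $\alpha_\sigma$ from \qref{eq:def-alpha} is nonnegative, so the exponential damping factors are at most $1$ and may be discarded to obtain $\|f_\sigma(\cdot,t)\|_{L^\infty}\le\|f_\sigma(\cdot,0)\|_{L^\infty}+\int_0^t\|j_\sigma^+(\cdot,\tau)\|_{L^\infty}\,d\tau$; thus only the \emph{gain} flux needs control. The target estimate is $\|j_\sigma^+(\cdot,\tau)\|_{L^\infty}\le C\,\Phi(\tau)\,\|f(\tau)\|_{L^\infty}$. To obtain it I would bound each gain coefficient using $J\lsup_{s,\sigma}\le K\lsup\le C$, collapse the selection weights via $\sum_s J\lsup_{s,\sigma}W_s\lsup\le C\max_l\sum_k W_k\lsup$, and--crucially--replace the ball argument of Lemma~\ref{le:un-bound} by the global bound $F(t)\le F(0)$ just proved, which controls $\gamma=F/\sum_n w\intsup_n F_n\le C\max_l\sum_k W_k\lsup$ exactly as in \qref{eq:gamma-up}. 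Writing $m(t):=\|f(t)\|_{L^\infty}$, the resulting integral inequality $m(t)\le m(0)+C\int_0^t\Phi(\tau)\,m(\tau)\,d\tau$ then yields \qref{eq:infinity-bound} by Gronwall.

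The main obstacle is closing the gain-flux estimate into a genuinely \emph{linear} inequality in $\|f\|_{L^\infty}$ with a \emph{universal} constant and the exact exponent $\int_0^t\Phi$. The difficulty is that the boundary gain carries the rate $\dot L_l=|v_l|f_l(0,\tau)$, which naively costs an extra factor $\|f\|_{L^\infty}$, while the weights $W_k\lsup$ degenerate precisely as $\sum_n w\lsup_n F_n\to0$. The device that controls both is the balance law from the first part: it forces $F(t)\le F(0)$, so that $\gamma$ and the weight denominators are dominated by $\Phi$ alone, and one tracks carefully which contributions scale like the constant $t$ (interior events, $\beta$ being fixed) versus like $\max_l\sum_k W_k\lsup$ (boundary selection). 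This is also why the only possible blow-up is through $\int_0^t\Phi$, tying the estimate to the continuation criterion $\sum_n w\lsup_n F_n\to0$ of Theorem~\ref{thm:wp}. Secondary technical points I would dispatch routinely, given $f\in C([0,T];X)$, are the justification of differentiating $F_\sigma(t)$ under the integral (either through \qref{eq:uq1} directly or weakly) and the decay of $f_\sigma$ at infinity used in the balance law.
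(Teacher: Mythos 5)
Your proposal follows essentially the same route as the paper's proof: the balance law \qref{eq:cons-num} is obtained from the pointwise identity $\sum_{\sigma=1}^M j_\sigma=0$ (a consequence of the row sums \qref{eq:Jequality}) followed by integration in $x$ with the boundary terms producing $-\sum_l \dot L_l$, and the bound \qref{eq:infinity-bound} is obtained from the mild formula together with the operator estimate $\|A\lsup\|_o\le C\sum_k W_k\lsup$ and Gronwall's lemma. The only cosmetic difference is that you start from the damped representation \qref{eq:uq2} and control only the gain flux $j_\sigma^+$, whereas the paper uses \qref{eq:uq1} and estimates the full flux; the factor $\dot L_l$ that you single out as the main obstacle is in fact treated no more delicately in the paper's own argument, which simply absorbs it into the constant in \qref{eq:flux-infinity}.
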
             
Equation \ref{eq:cons-num} expresses conservation of the total number density
of the system. The  bound \ref{eq:infinity-bound}       degenerates if and
only $F(t) \to 0$, i.e. if and only if $F_p(t) \to 0$ for each $p=1, \ldots,
M$, as $t$ approaches a critical time, say $T_*$. It is well-known that continuous
mild solutions on an interval $[0,T]$ can be uniquely continued onto a maximal
interval of existence $[0,T_*)$, such that $\lim_{t \to T_*} \|f(t)\|_X =
+\infty$. Thus, the above estimates suffice to complete the proof of Theorem~\ref{thm:wp}.

\begin{proof}
{\em 1. \/} The conservation of number for the kinetic equations is a consequence
of the switching rules for the particle system. We use the identity \ref{eq:Jequality},
and the definition of the fluxes in equations \ref{eq:jplus} and \ref{eq:jminus}
to obtain the identity
\begin{equation}
\label{eq:j-identity}
\sum_{\sigma=1}^M j_\sigma =0.
\end{equation}
It follows from \ref{eq:kinetic} and \ref{eq:j-identity} that 
\begin{equation}
\sum_{\sigma=1}^M \partial_t f_\sigma  + v_\sigma \partial_x f_\sigma =0.
\end{equation}
We integrate over $x \in [0,\infty)$ to obtain the identity\begin{equation}
\label{eq:dotF}
\frac{dF}{dt} = \sum_{\sigma=1}^M v_\sigma f_\sigma(0,t) = -\sum_{l=1}^{M_-}
\dot{L}_l.
\end{equation}
The integral form of this identity is \ref{eq:cons-num}.

{\em 2.\/} In order to prove \ref{eq:infinity-bound} we  combine equations
\ref{eq:defA} and \ref{eq:defB}  to obtain the pointwise estimate 
\begin{equation}
\label{eq:pointwise}
\| A^{(l)}(t)  \|_o \leq C\sum_{k = 1}^MW^{(l)}_k(t) , \quad t \in [0,T].
\end{equation}
Consequently, the flux due to boundary events satisfies the $L^\infty$ estimate
\begin{align}
\label{eq:flux-infinity}
\left\| \sum_{l=1}^{M_-} A^{(l)} \dot{L}_l f(t)\right\|_\infty &\leq \sum_{l=1}^{M_-}
\| A^{(l)}(t)  \|_o \dot{L}_l \|f(t)\|_\infty\\
 &\leq  C\left(\max_{l \le M_-}\sum_{k = 1}^{M} W_k^{(l)}(t) \right) \|f(t)\|_\infty
.
\end{align}

The flux due to interior events is controlled in a similar manner. As in
\ref{eq:gamma-up} we find $\gamma(t) \leq C$, $t \in [0,T]$. Since $\|A^{(0)}
\|_o \leq C$, we find
\begin{equation}
\label{eq:intestimate}
\|\beta \gamma(t) A^{(0)} f (t)\|_\infty \leq C \beta \| f(t) \|_\infty.
\end{equation}
We combine \ref{eq:flux-infinity} and \ref{eq:intestimate} to obtain
\begin{equation}
\label{eq:flux-fin}
\| {\bm \jmath}(t) \|_\infty \leq C \Phi(t)\|f(t)\|_\infty . 
\end{equation} 

We now substitute these $L^\infty$ estimates in the solution formula \ref{eq:uq1}
 to obtain
\begin{eqnarray}
\nonumber
\lefteqn{\| f(t) \|_\infty = \sum_{\sigma=1}^M \|f_\sigma(\cdot,t) \|_\infty
\leq \sum_{\sigma=1}^M \|f_\sigma(\cdot,0)\|_\infty + \int_0^t \|j_\sigma(\cdot,
\tau) \|_\infty \, d\tau} \\
\label{eq:pre-gron} &&
\stackrel{\ref{eq:flux-fin}}{\leq}  \|f(0)\|_\infty + C \int_0^t \Phi(\tau)
 \|f(\tau)\|_\infty\, d\tau.
\end{eqnarray}
An application of Gronwall's lemma yields \ref{eq:infinity-bound}. \end{proof}

From here, it is straightforward to obtain several  regularity properties
for mild solutions.
\begin{theorem} \label{thm: reg} The following hold for mild solutions:
\begin{enumerate}
\item  Let $f(x,t; f_0)$ denote  the unique mild solution with initial condition
$f_0$ for $t <T_*(f_0)$. Then,    the map defined by  $(f_0,t) \mapsto f(x,t;f_0)$
is continuous in both  time and space  at all points where the map is well-defined.
\item The maximum time of existence $T^*$ for mild solutions depends continuously
on initial
conditions.  
\item Let $f_0 \in X \cap C^1([0,\infty)^M)$ with $f'_0 \in X $. Then the
mild solution $f(x,t;f_0)$ is differentiable in both space and time for $t
\in [0,T^*)$, and satisfies
(\ref{eq:kinetic}).   
\end{enumerate}

\end{theorem}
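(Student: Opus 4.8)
The plan is to read \qref{eq:kinetic} as the abstract integral equation \qref{eq:uq1} for the ODE $\dot f=\jflux(f)$ on the Banach space $X$, exploiting two features: the nonlinearity $\jflux$ is locally Lipschitz (Lemma~\ref{le:loc-lip}), and, as \qref{eq:jplus}--\qref{eq:jminus} show, it acts on the $x$-variable purely by multiplication, $\jflux(f)(x,t)=\mathcal A(t)\,f(x,t)$, where $\mathcal A(t)=\sum_{l}A\lsup\dot L_l(t)+\beta\gamma(t)A\intsup$ from \qref{eq:flux-matrix} depends on $t$ only through the boundary values $f_l(0,t)$ and the total numbers $F_n(t)$. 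This ``no $x$-derivative in the flux'' structure reduces all three statements to essentially linear, Volterra-type arguments. For (1), continuity in $(x,t)$ is immediate: a mild solution lies in $C([0,T];X)$ and $X$ consists of continuous functions, so $t\mapsto f(\cdot,t)$ is continuous into $C([0,\infty))^M$ and $(x,t)\mapsto f(x,t)$ is jointly continuous. Continuity in $f_0$ is the standard Gronwall argument: given $f_0$ with $F_0>0$, choose a common ball $B_r(f_0)$ on which Lemma~\ref{le:loc-lip} supplies a uniform Lipschitz constant $C_*$ (using $F_0>0$ so that for $g_0$ near $f_0$ the denominators $\sum_n w\lsup_n G_n$ stay bounded below, as in \qref{eq:lowerW}); subtracting the integral equations \qref{eq:uq1} for $f_0$ and $g_0$ and applying Gronwall gives $\|f(t;f_0)-f(t;g_0)\|\le\|f_0-g_0\|e^{C_*t}$ as long as both solutions stay in the ball, which upgrades to joint continuity of $(f_0,x,t)\mapsto f(x,t;f_0)$ on the open set where the solution is defined.

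For (2), lower semicontinuity of $T^*$ is a direct consequence of (1): if $T^*(f_0)>T$ then $f(\cdot;f_0)$ remains on $[0,T]$ in a bounded region bounded away from the degenerate set $\{\min_l\sum_n w\lsup_nF_n=0\}$, so by continuous dependence the same holds for all nearby $g_0$, giving $T^*(g_0)>T$. The harder direction, which I expect to be the \emph{main obstacle}, is upper semicontinuity when $T^*(f_0)<\infty$. By the maximal-existence lemma the only obstruction to continuation is the degeneration of \qref{eq:infinity-bound}, i.e. $\Delta(t):=\min_l\sum_n w\lsup_nF_n(t)\to0$ as $t\to T^*(f_0)$. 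One must rule out the possibility that $\Delta$ merely grazes zero, since then a small perturbation $g_0$ could escape back to positive $\Delta$ and postpone blow-up. I would establish a transversal (non-grazing) crossing by differentiating $\Delta$ and using the number ODEs $\dot F_\sigma=v_\sigma f_\sigma(0,t)+\int_0^\infty j_\sigma\,dx$ (obtained by integrating \qref{eq:kinetic} in $x$) to show $\dot\Delta$ is bounded away from zero on the degenerate set; combined with the uniform continuous dependence on $[0,T]$ for each $T<T^*(f_0)$ and the continuity of $t\mapsto\Delta(t;g_0)$, this forces $T^*(g_0)\to T^*(f_0)$. Verifying the non-degeneracy estimate from the structure of the $\dot F_\sigma$ system is the delicate part.

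For (3), I would differentiate the mild formulation in $x$. Because $\jflux(f)(x,t)=\mathcal A(t)f(x,t)$ with $\mathcal A$ independent of $x$, the candidate $g=\partial_xf$ must satisfy the \emph{linear} integral equation
\be
g_\sigma(x,t)=f_{0,\sigma}'(x-v_\sigma t)+\int_0^t\big(\mathcal A(\tau)\,g(x-v_\sigma(t-\tau),\tau)\big)_\sigma\,d\tau,
\ee
with $g_\sigma$ and the integrand extended by zero for negative argument exactly as in the discussion preceding \qref{eq:uq1}. Here $\mathcal A(\tau)$ is \emph{already known}: it is the continuous (by part (1)) matrix-valued function built from the solution $f$ constructed earlier, so there is no circularity. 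This linear Volterra equation has coefficients continuous on each $[0,T]$ with $T<T^*$, hence a unique solution $g\in C([0,T];X)$ by the contraction mapping theorem (the estimates of Lemmas~\ref{le:un-bound}--\ref{le:loc-lip} apply verbatim with $\mathcal A$ frozen). To identify $g$ with $\partial_xf$, I would run the difference quotient $f^h(x,t)=h^{-1}(f(x+h,t)-f(x,t))$ through the same linear equation and show by Gronwall that $f^h\to g$ in $C([0,T];X)$ as $h\to0$; since $g$ is continuous, $f$ is $C^1$ in $x$. Finally, with $\partial_xf$ and $\jflux(f)$ continuous in $(x,t)$, the characteristic form of \qref{eq:uq1} may be differentiated in $t$ to yield $\partial_tf_\sigma=-v_\sigma\partial_xf_\sigma+j_\sigma$ pointwise, i.e. $f$ is a classical solution of \qref{eq:kinetic}. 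The one point requiring care is $C^1$-regularity up to the spatial boundary $x=0$ for the outgoing species $\sigma\in S_+$, where the zero-extension and the boundary condition \qref{eq:kinetic-BC} demand the compatibility $f_{0,\sigma}(0)=0$; this is where the hypotheses $f_0\in C^1$ and $f_0'\in X$ are combined with \qref{eq:kinetic-BC}.
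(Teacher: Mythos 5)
Your proposal follows the same skeleton as the paper's proof---the mild formulation \qref{eq:uq1} plus the Lipschitz estimate of Lemma~\ref{le:loc-lip} and Gronwall for part (1), and difference quotients for part (3)---but it is genuinely more complete in two places where the paper is terse. For part (2), the paper asserts that continuous dependence of $T^*$ ``follows immediately'' from part (1); you correctly observe that only lower semicontinuity is immediate, and that upper semicontinuity at a finite blow-up time requires ruling out a grazing approach of $\sum_n w\lsup_n F_n(t)$ to zero. Your transversality sketch for that step is plausible but, as you acknowledge, not completed---so here you have located a gap in the published argument rather than introduced one of your own. For part (3), the paper only establishes a uniform bound $y_h(t)\le C(T,f_0)\|D_h f_0\|_\infty$ on the difference quotients and then ``takes $h\to0$''; boundedness alone does not give differentiability. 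Your route---constructing the candidate derivative as the solution of the linear Volterra equation with the frozen, already-known coefficient $\mathcal A(\tau)$ and then showing via Gronwall that the difference quotients converge to it---is exactly the argument needed to make that step rigorous, and your remark about the compatibility condition $f_{0,\sigma}(0)=0$ at $x=0$ for species in $S_+$ addresses a boundary issue the paper passes over. In short: same approach in outline, but your version supplies the missing convergence argument in (3) and honestly flags the unproved upper-semicontinuity claim in (2); what it buys is rigor, at the cost of leaving (2) only partially resolved.
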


\begin{proof}
We have already shown continuity in $t$ from a contraction mapping theorem.
We now show  continuous dependence on initial conditions. To show continuity
in space, let $f_0^1, f_0^2 \in X$ with mild solutions $f_1(x,t)$ and $f_2(x,t)$.
   For $f_0^2 \in X$ sufficiently close to $f_0^1$, we may use  (\ref{eq:uq1})
and  (\ref{eq:flux-bound2}) to show there exists $t^*>0$ such that for  

$y(t) =  \|f^1(t) - f^2(t)\|$, 
\begin{equation}
y(t) \le  y(0) +C(f_0)\int_0^t y(s)ds, \quad t \in [0,t^*).
\end{equation}
An application of Gronwall's inequality implies continuous dependence  up
to time $t^*$. By a standard compactness argument, this  may be extended
to any $[0,T]$  with  $T<T_*(f_0)$. This shows part (1), from which   (2)
follows immediately. 

To prove (3), we show differentiability in space,  using the (without loss
of generality) right difference quotients \begin{equation}
 y_h(t) =  \|D_h^+(f(x,t))\|_\infty =\left\| \frac {f(x+ h,t)-f(x)}{h}\right\|_\infty
\end{equation}
for $h>0$. Similar to Part (1), through (\ref{eq:uq1}) and (\ref{eq:flux-bound2})
we may show that for  
 $t \in [0,T]$ with $T<T^*$, there exists a finite constant $C(T,f_0) <\infty$
\begin{equation}
y_h(t) \le C(T, f_0)\|D_h f_0(x)\|_\infty.
\end{equation} 
Since $f'_0 \in L^\infty$,  we may take  $h\rightarrow 0$ to obtain $\|\partial_x
f(x,t)\|_\infty < \infty.$ The argument for time derivatives is identical.

\end{proof} 
 
 For the purposes of  demonstrating global existence for PDMPs related to
grain coarsening in Section \ref{sec:grainprops}, we mention that characteristic
speeds are bounded by $\bar v = \max_{s \le M} v_s$.   
   
 \begin{theorem}[Finite speed of propagation] \label{thm:fsp} If $f_0$ has
support contained in $[0,L]$, then for $0<t<T_*$,  the support $f(x,t)$ is
contained in $[0, L+\bar vt]$.  
 \end{theorem}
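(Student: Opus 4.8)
The plan is to exploit the fact that the flux is \emph{local in the spatial variable}. Inspecting \eqref{eq:jplus}--\eqref{eq:jminus}, the birth flux can be written as $j_\sigma^+(x,t) = \sum_{s=1}^M c_{s\sigma}(t)\,f_s(x,t)$, where the coefficients
\[
c_{s\sigma}(t) = \sum_{l=1}^{M_-}\dot L_l(t)\,J\lsup_{s,\sigma}W_s\lsup(t) + \beta\gamma(t)\,J\intsup_{s,\sigma}w\intsup_s \;\ge\; 0
\]
depend on $t$ alone (through the spatially-global quantities $\dot L_l$, $W_s\lsup$, $\gamma$) and not on $x$. Fixing $T<T_*$ and working on $[0,T]$, the a priori bounds already established in the proof of Theorem~\ref{thm:wp} (namely $\dot L_l \le C\|f(t)\|$, $W_s\lsup \le C$ and $\gamma \le C$, using that $\sum_n w\lsup_n F_n$ stays bounded below away from $T_*$) give $0 \le c_{s\sigma}(t) \le C_T$ for a constant $C_T$ depending on $T$. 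Since $\dot L_l \ge 0$, the coefficient $\alpha_\sigma$ of \eqref{eq:def-alpha} is nonnegative, so the positivity-preserving representation \eqref{eq:uq2} carries both an exponential factor in $[0,1]$ and a manifestly nonnegative integrand.

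Next I would track how the support propagates along characteristics. The characteristic of species $\sigma$ through $(x,t)$ is $\tau \mapsto x - v_\sigma(t-\tau)$, and since $v_\sigma \le \bar v$ for every $\sigma$, a point $x > L + \bar v t$ satisfies $x - v_\sigma(t-\tau) > L + \bar v\tau$ for all $\tau \in [0,t]$, the inequality $(\bar v-v_\sigma)(t-\tau)\ge 0$ being the only thing used; in particular $x - v_\sigma t > L$. Hence in \eqref{eq:uq2} the initial-data term $f_\sigma(x-v_\sigma t,0)$ vanishes for such $x$, because $f_0$ is supported in $[0,L]$, and the integrand $j_\sigma^+\!\left(x-v_\sigma(t-\tau),\tau\right)$ is evaluated only at spatial points exceeding $L + \bar v\tau$.

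To close the argument rigorously I would introduce the weighted tail functional
\[
\Psi(t) = \sum_{\sigma=1}^M \int_{L+\bar v t}^\infty f_\sigma(x,t)\,dx, \qquad t \in [0,T].
\]
Using \eqref{eq:uq2}, the vanishing of the initial term, $e^{-\int_\tau^t \alpha_\sigma}\le 1$ and $0 \le c_{s\sigma}\le C_T$, then performing the change of variables $y = x - v_\sigma(t-\tau)$ and invoking the inclusion $\bigl(L + \bar v t - v_\sigma(t-\tau),\infty\bigr) \subseteq \bigl(L+\bar v\tau,\infty\bigr)$ together with $f_s \ge 0$, I obtain $\Psi(t) \le C_T\int_0^t \Psi(\tau)\,d\tau$. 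Since $f \in C([0,T];X)$ the function $\Psi$ is bounded, and $\Psi(0)=0$ because $f_0$ is supported in $[0,L]$; Gr\"onwall's inequality then forces $\Psi \equiv 0$ on $[0,T]$. As each $f_\sigma(\cdot,t)$ is continuous and nonnegative, $\Psi(t)=0$ yields $f_\sigma(x,t)=0$ for every $x > L+\bar v t$, i.e. $\operatorname{supp} f(\cdot,t) \subseteq [0,L+\bar v t]$; letting $T \uparrow T_*$ covers all $t<T_*$.

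The structural observation that the flux is local in $x$ is essentially the whole content: once it is in hand, the only points requiring care are the bookkeeping in the change of variables and domain inclusion, and the verification that the flux coefficients stay bounded up to any $T<T_*$, which is inherited from the estimates in the proof of Theorem~\ref{thm:wp}. I therefore do not anticipate a serious obstacle.
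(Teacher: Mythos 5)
Your proof is correct and takes essentially the same route as the paper: the paper's proof is a one-line application of Gr\"onwall's inequality to a tail functional ahead of the moving front $x = L+\bar v t$, exploiting exactly the two facts you isolate (the flux is local in $x$ with coefficients bounded on $[0,T]$, and every characteristic speed is at most $\bar v$). The only cosmetic difference is that the paper's functional is the sup-type tail $g_\sigma(t)=\sup_{x>L+\bar v t}f_\sigma(x,t)$ while yours is the integrated tail $\Psi(t)$; both close the same Gr\"onwall loop.
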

 
 \begin{proof}
The argument is similar to Theorem \ref{thm: reg}, but now using    (\ref{eq:uq1})
and Gronwall's inequality applied to 
\begin{equation}
g_\sigma(t) = \sup_{x>L+(t-T)\bar v} f_\sigma(x,t), \quad \sigma = 1, \dots,
M.
\end{equation} 
 \end{proof}

\subsection{Properties of kinetic equations for grain coarsening} \label{sec:grainprops}

\subsubsection{Conserved quantities}

Conservation of area and zero polyhedral defect for kinetic equations, defined
similarly from (\ref{finiteareapoly}), are conserved for the kinetic equations
when considering initial data which is compactly supported.
\begin{theorem}
Let $f_0 \in X \cap C_c([0,\infty))$. Then
for $t \in [0,T^*)$,
 
 \begin{equation}\label{conserve}
P(t):=\sum_{n
= 2}^M(n-6)F_n(t),\quad  A(t)
:= \sum_{n = 2}^M\int_0^\infty a\cdot f_n(a,t) da,
\end{equation}
if $P(0) = 0$ and $A(0) = A$, then $P(t) = 0$ and  $A(t) = A$ for all $t>0$.
\end{theorem}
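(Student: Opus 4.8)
The plan is to differentiate $P(t)$ and $A(t)$ in time, test the kinetic equation \qref{grainpde} against the weights $1$ and $a$ respectively, and exhibit a telescoping/pointwise cancellation that forces $dP/dt=0$ and $dA/dt = P(t)$. First I would record the regularity facts that license the manipulations: by Theorem~\ref{thm: reg}(3) the mild solution with $f_0\in X\cap C^1$ is classical, and by Theorem~\ref{thm:fsp} it remains compactly supported in $a$ for $t<T^*$. This justifies differentiation under the integral, the integrations by parts below, makes the boundary contributions at $a=+\infty$ vanish, and makes $f_n(0,t)$ well defined.

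For the defect, multiply \qref{grainpde} by $(n-6)$, integrate over $a\in[0,\infty)$, and sum over $n$. The transport term yields $-\sum_n (n-6)\int_0^\infty (n-6)\partial_a f_n\,da = \sum_n (n-6)^2 f_n(0,t)$, using $f_n(\infty,t)=0$; only $l\in S_-=\{2,3,4,5\}$ survive, since $f_n(0,t)=0$ for $n\ge 7$ by \qref{eq:kinetic-BC} and the factor $(n-6)$ kills $n=6$. It then suffices to show the integrated sources cancel this, i.e. $\sum_n (n-6)\int_0^\infty j_n\,da = -\sum_{l\in S_-}(l-6)^2 f_l(0,t)$. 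Because $W_n^{(l)}(t)$ and $\gamma(t)$ are independent of $a$, $\int_0^\infty j_n\,da$ is obtained by replacing $f_\bullet(a,t)$ with $F_\bullet(t)$ in \qref{hgrainbegin}--\qref{hgrainend}. For each deletion species $l$, shifting the summation index in the birth term to align it with the death term and invoking the normalization $\sum_m W_m^{(l)}(t) F_m(t) = 1$ (immediate from \qref{eq:numbers-not2}) collapses the $l$-contribution to exactly $-(l-6)^2 f_l(0,t)$; a parallel index shift shows the edge terms telescope to $0$. Adding these to the transport term gives $dP/dt=0$, hence $P(t)=P(0)=0$.

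For the area, multiply \qref{grainpde} by $a$ and integrate. Integration by parts gives $-\sum_n(n-6)\int_0^\infty a\,\partial_a f_n\,da = \sum_n (n-6)F_n(t) = P(t)$, the boundary term $a f_n(a,t)$ vanishing at both endpoints. The source contributes $\sum_n \int_0^\infty a\, j_n\,da = \int_0^\infty a\big(\sum_n j_n\big)\,da = 0$ by the pointwise flux balance $\sum_\sigma j_\sigma = 0$ from \qref{eq:j-identity}, reflecting that mutations preserve a grain's area. Thus $dA/dt = P(t)$, and since $P(t)\equiv 0$ we get $A(t)=A(0)=A$. This is the continuum analogue of the finite-particle identity $\partial_t A^N = P^N$ in the proof of Theorem~\ref{thm:finitecons}.

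The main obstacle is the bookkeeping at the ends of the species range $n=2$ and $n=M$: the index shifts in the defect computation must not create spurious $1$-sided or $(M+1)$-sided grains. I would resolve this using the conventions fixed in Sections~\ref{sec:mspecgrain}--\ref{subsec:parweight} that set the relevant entries of $R^{(l)}$ and the weights $w_m^{(l)},w_m^{(0)}$ to zero outside the admissible range; with these zeros the shifted and unshifted sums run over the same index set, so that cancellations such as $\sum_m\big[(m-8)-(m-6)\big]W_m^{(2)}F_m = -2\sum_m W_m^{(2)}F_m$ (and their analogues for $l=3,4,5$ and the edge term) are exact. The only other point needing care is the behaviour of $f_n(0,t)$ and of the $a\to\infty$ boundary terms, which is precisely what Theorems~\ref{thm: reg} and \ref{thm:fsp} supply.
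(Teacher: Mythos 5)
Your proposal is correct and follows essentially the same route as the paper's proof: integrate the kinetic equation against the weights $(n-6)$ and $a$, use the normalization $\sum_m W^{(l)}_m(t) F_m(t)=1$ together with the index-shift cancellations in the source terms to get $dP/dt=0$, and use the pointwise identity $\sum_\sigma j_\sigma=0$ plus $dA/dt=P(t)$ for the area; your sign bookkeeping and your treatment of the endpoint species $n=2$ and $n=M$ are, if anything, more explicit than the paper's ``straightforward computation.'' The one step you elide is the reduction from the stated hypothesis $f_0\in X\cap C_c([0,\infty))$ to the $C^1$ data required to invoke Theorem~\ref{thm: reg}(3): the paper handles this by approximating $f_0$ by differentiable, compactly supported $f_0^j$ and passing to the limit via continuous dependence on initial data, so that it suffices to verify the conservation identities for classical solutions. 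With that one approximation sentence added, your argument is complete.
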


\begin{proof}

 Suppose
$f_0^j \in X$ are differentiable with compact support for $j\in\mathbb N$,
and $f_0^j\rightarrow f_0$ in $X$ as $j\to\infty$. By continuous dependence
of parameters, for all  
$t>0$, solutions $f^j(a,t) \rightarrow f(a,t)$ in $X$. Thus it is sufficient
to show (\ref{conserve}) for classical solutions. To do so, we  integrate
(\ref{grainpde}) and sum over
all species
to obtain
\begin{equation}\label{sumgrainpde}
\sum_{n = 2}^M (n-6)F_n(t) = -\sum_{n = 2}^{M_-} (n-6)^2f_n(0,t)+\sum_{n
= 2}^M (n-6)
\int_0^\infty j_n(a,t)da. 
\end{equation}
 The left hand side of  (\ref{sumgrainpde}) is the polyhedral defect, and
a  straightforward computation using (\ref{hgrainbegin})-(\ref{hgrainend})
shows that
\begin{equation}
\sum_{n = 2}^M (n-6)
\int_0^\infty j_n(a,t)da = \sum_{n = 2}^{M_-} (n-6)^2f_n(0,t), 
\end{equation}
which shows the conservation of polyhedral defect. 

To show the conservation of area, we find through  an integration by parts
of (\ref{grainpde}) with  (\ref{eq:j-identity}) that
\begin{align}
\frac{dA(t)}{dt} &= \sum_{n = 2}^M (n-6)\int_0^\infty  a \cdot\partial_x
f_n(a,t)da+
\int_0^\infty a\sum_{n=2}^M j_n(a,t)da\\
&= P(t).
\end{align}
For initial conditions with zero polyhedral defect, conservation of area
then follows.

\end{proof}

The conservation of total area is sufficient to show global existence under
a wide choice of weights.  

\begin{theorem}[Global existence]
 Suppose $M \neq 10$, and   $w_k^{(l)}>0$, for $(k,l) \in \{2, \dots M\}
\times\{2\times M_-\}\backslash\{\cup_{i  \in \{0,2,3,4,5\}} (2,i), (3,2),
(M,5), (M,0)\}.$ For nonzero initial conditions with zero polyhedral defect,
 the maximum interval of existence for mild solutions is infinite. \end{theorem}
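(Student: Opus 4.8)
The plan is to show that the only blow-up mechanism permitted by Theorem~\ref{thm:wp} --- namely the collapse $\sum_{n} w\lsup_n F_n(t)\to 0$ for some $l$ as $t\to T_*$ --- cannot occur in finite time, by exhibiting a uniform positive lower bound for each of these denominators on $[0,T_*)$.

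First I would reduce to compactly supported data. Given positive $f_0\in X$ with zero polyhedral defect, approximate it in $X$ by positive, compactly supported $f_0^j$ with $P(0)=0$ (truncate and adjust a vanishingly small amount of mass to restore zero defect). Since the maximal existence time depends continuously on the initial data (Theorem~\ref{thm: reg}), it suffices to prove $T_*=\infty$ when $f_0$ is compactly supported, say $\mathrm{supp}\,f_0\subset[0,L]$. For such data the conservation laws \qref{conserve} hold, so $A(t)\equiv A$ and $P(t)\equiv0$. Combining area conservation with the finite speed of propagation (Theorem~\ref{thm:fsp}), the support of $f(\cdot,t)$ lies in $[0,L+\bar v t]$, whence $A=\sum_n\int_0^\infty a\,f_n\,da\le (L+\bar v t)\,F(t)$. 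If $T_*<\infty$ this yields the crucial lower bound $F(t)\ge A/(L+\bar v T_*)=:c_0>0$ for all $t\in[0,T_*)$.

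Next I would convert this into lower bounds on the individual denominators using zero defect. Since all admissible weights are strictly positive, $\sum_n w\lsup_n F_n\ge \underline{w}\sum_{n\in\mathcal I_l}F_n$, where $\mathcal I_l$ is the set of species with $w\lsup_n>0$: $\mathcal I_2=\{4,\dots,M\}$, $\mathcal I_3=\mathcal I_4=\{3,\dots,M\}$, and $\mathcal I_5=\mathcal I_0=\{3,\dots,M-1\}$. For $l\in\{2,3,4\}$ the collapse would force the mass onto $\{2\}$ or $\{2,3\}$; but then $P=\sum_n(n-6)F_n\to -4F_2-3F_3\le -3c_0<0$, contradicting $P\equiv0$. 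Thus $\sum_n w\lsup_n F_n$ stays bounded below for $l\in\{2,3,4\}$ with no further work.

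The remaining, genuinely delicate cases are $l\in\{5,0\}$, where the collapse $\sum_{n=3}^{M-1}F_n\to0$ is compatible with $P\equiv0$: it only forces the mass onto the two extreme species with $-4F_2+(M-6)F_M\to0$. Ruling this out requires the dynamics, not just the conservation laws. Here I would study the reduced flux between $\{2,M\}$ and the interior band $\{3,\dots,M-1\}$: boundary events at $l=2$ feed species $M-2$ from species $M$ at rate $\propto\dot L_2$, while fast interior events (whose rate $\beta\gamma$ blows up precisely as $\sum_{n=3}^{M-1}w\intsup_n F_n\to0$) redistribute this band back toward $\{2,M\}$ along a defect-preserving symmetric random walk. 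Computing the net per-event increments should give $\Delta F_2=-1+4/(M-2)$ and $\Delta F_M=-4/(M-2)$; since the total number $F$ is bounded below by $c_0$ while each boundary event decrements it, only finitely many such events may occur, after which species $2$ drains through the origin under its negative drift with no replenishment, forcing $F_2\to0$ and hence (via $P\equiv0$) $F\to0$, contradicting $F\ge c_0$. I expect this drainage argument to be the main obstacle, and to be exactly where the hypothesis $M\neq10$ enters: the two increments coincide, $\Delta F_2=\Delta F_M$, precisely when $M-2=8$, i.e. $M=10$, in which case $F_2-F_M$ is invariant along the reduced dynamics and a self-consistent concentrated profile (with $F_2=F_M$ enforced by zero defect) can persist; for $M\neq10$ the asymmetry $|v_2|\neq v_M$ breaks this balance and delivers the contradiction. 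Assembling the four cases shows every denominator is bounded below on $[0,T_*)$, which contradicts Theorem~\ref{thm:wp} unless $T_*=\infty$.
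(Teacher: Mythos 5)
Your overall strategy --- argue by contradiction from the blow-up criterion of Theorem~\ref{thm:wp}, and use conservation of area, finite speed of propagation, and zero polyhedral defect to rule out the collapse of the weighted denominators --- is the same as the paper's. Your reduction to compactly supported data and the lower bound $F(t)\ge A/(L+\bar v T_*)$ are fine, and your treatment of the cases $l\in\{2,3,4\}$ is correct and matches what the paper's ``check directly'' amounts to there: vanishing of those denominators forces all the mass onto $\{2\}$ or $\{2,3\}$, which is incompatible with zero defect and a positive lower bound on $F$. (The paper phrases the endgame slightly differently --- it concludes $F_\sigma(T_*^-)=0$ for every $\sigma$ and then contradicts area conservation via $A(T_*^-)\le L\,F(T_*^-)=0$ --- but this is logically the same step.)

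The gap is in the cases $l\in\{5,0\}$. You correctly observe that zero defect there only yields $4F_2=(M-6)F_M$ rather than $F_2=F_M=0$, but your proposed resolution is not a proof: the per-event increments $\Delta F_2$ and $\Delta F_M$ are asserted with ``should give,'' the drainage mechanism is introduced with ``I expect,'' and the claimed role of $M\neq 10$ as a resonance making $F_2-F_M$ invariant is a conjecture you never verify. None of this machinery appears in the paper; its proof simply asserts that the positivity conditions on $w_k^{(l)}$ together with zero polyhedral defect force $F_\sigma(T_*^-)=0$ for all $\sigma$, and its stated reason for excluding $M=10$ is the existence of the zero-defect state $F_2=F_{10}=1/2$ supported on the two extreme species, for which the edge-deletion denominator vanishes identically --- not a cancellation in jump increments. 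So while you have put your finger on the one genuinely delicate point (and the paper's one-line dismissal of it is arguably too quick), your attempt to close it is speculative and incomplete: as written, the cases $l\in\{5,0\}$ remain unproved in your argument.
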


\begin{remark}
We require $M\neq10$ due to the impossibility of edge deletion in the pathological
case of initial conditions  $F_2(0) = F_{10} (0) = 1/2$, and $ F_\sigma =
0$ for $ \sigma \notin \{2,10\}$.
\end{remark}

\begin{proof}
Suppose for the sake of contradiction, that a finite  maximum interval of
existence  $T_*<\infty$.  From Theorem \ref{thm:wp},  
$\sum_{\sigma=2}^M w^{(l)}_\sigma F_\sigma(T_*^-) = 0$ for  some $l  = 1,
\dots,
M^-$. We can check directly that from the conditions on $w_k^{(l)}$, using
zero polyhedral defect, that  the stronger condition of $F_\sigma(T_*^-)
= 0$ must hold for all $\sigma = 2, \dots, M$. Since $f(x,t) \in C_c^1(\mathbb{R}_+)$
from Theorem \ref{thm:fsp},  there exists $L>0$ such that $f(x,t) = 0$ for
$x>L$ and $0<t<T_*$.  This implies, however,
that
\begin{equation}
A(T_*^-) =\sum _{\sigma = 2}^M \int_0^\infty a\cdot f_\sigma(a,T_*^-)da 
\le L\sum _{\sigma = 2}^M F_\sigma(T_*^-) = 0,
\end{equation}
a contradiction to the conservation of total area.
\end{proof}

\bibliographystyle{spmpsci}
\bibliography{kmp}

\end{document}